\tikzset{
>=stealth,
every picture/.style={thick},
graphs/every graph/.style={empty nodes},
}
\tikzstyle{vertex}=[
\tikzstyle{printersafe}=[decoration={snake,amplitude=0pt}]
\newcommand{\id}{\operatorname{id}}
\newcommand{\rk}{\operatorname{rk}}
\newcommand{\cl}{\operatorname{cl}}
\newcommand{\zz}{\mathbb{Z}}
\newcommand{\rr}{\mathbb{R}}
\newcommand{\cc}{\mathbb{C}}
\newcommand{\vgeq}{\rotatebox[origin=c]{90}{$\geqslant$}}
\newcommand{\vmapsto}{\rotatebox[origin=c]{270}{$\mapsto$}}
\newcommand{\defeq}{\vcentcolon=}
\def\O#1.{\mathcal {O}_{#1}}
\def\pr #1.{\mathbb P^{#1}}
\def\af #1.{\mathbb A^{#1}}
\def\ses#1.#2.#3.{0\to #1\to #2\to #3 \to 0}
\def\xrar#1.{\xrightarrow{#1}}
\def\K#1.{K_{#1}}
\def\bA#1.{\mathbf{A}_{#1}}
\def\bM#1.{\mathbf{M}_{#1}}
\def\bL#1.{\mathbf{L}_{#1}}
\def\bB#1.{\mathbf{B}_{#1}}
\def\bK#1.{\mathbf{K}_{#1}}
\def\subs#1.{_{#1}}
\def\sups#1.{^{#1}}
\newtheorem{introthm}{Theorem}
\newtheorem{theorem}{Theorem}[section]
\newtheorem{lemma}[theorem]{Lemma}
\newtheorem{corollary}[theorem]{Corollary}
\theoremstyle{definition}
\newtheorem{definition}[theorem]{Definition}
\newtheorem{example}[theorem]{Example}
\newtheorem{question}[theorem]{Question}
\newtheorem{remark}[theorem]{Remark}
\renewcommand{\phi}{\varphi}
\theoremstyle{remark}
\numberwithin{equation}{section}
\newcounter{rownumber}[figure]
\newcounter{rownumber-irr}[figure]
\newcounter{rownumber-p1}[figure]
\begin{document}

\title[Amalgams, fibre products and correspondences]{Amalgams of matroids, fibre products \\ and tropical graph correspondences}
\begin{abstract}
We prove that the proper amalgam of matroids $M_1$ and $M_2$ along their common restriction $N$ exists if and only if the tropical fibre product of Bergman fans ${B(M_1) \times_{B(N)} B(M_2)}$ is positive. We introduce tropical correspondences between Bergman fans as tropical subcycles in their product, similar to correspondences in algebraic geometry, and  define a ``graph correspondence'' of the map of lattices. We prove that graph construction is a functor for the ``covering'' maps of lattices, exploiting a generalization of Bergman fan which we call a ``Flag fan''.
\end{abstract}

\author[D.~Mineev]{Dmitry Mineev}
\address{Bar-Ilan University, Ramat Gan, Israel}
\email{dmitry.mineyev@gmail.com}

\maketitle

\section{Introduction}
\par One of the features of matroid theory is that it makes use of numerous inductive constructions such as deletions, contractions and extensions (see, for example, \cite{Oxley}, Chapter 7). In this light, various attempts to ``glue'' matroids together were studied for a long time. The most basic construction having its roots in graphic matroids is the parallel connection, which sometimes plays even bigger role than the direct sum (\cite{KrisWerner}). Generalizations of parallel connection, called amalgams, were studied in the 1970s and 1980s (\cite{BachemKern}, \cite{PoljakTurzik}). Among those, the free amalgam stands out as the one satisfying a certain universal property in the category of matroids with morphisms being what is called weak maps. A slightly stronger notion of the free amalgam, the proper amalgam, is naturally called ``generalized parallel connection'' in some cases.
\par It turns out, though, that in general amalgamation is complicated --- for a given pair of matroids that we try to glue, there may be no free amalgam, or the free amalgam may not be proper, or no amalgams may exist at all. Several criteria guaranteeing the existence of the proper amalgam were established (\cite{Oxley}, Section 11.4), but even the weakest of them is not a necessary condition. Amalgams continue to be investigated --- for example, the so-called sticky matroid conjecture was apparently resolved recently (\cite{Bonin}, \cite{Shin}).
\par Over the last thirty years, tropical geometry, initiated as a combinatorial tool of algebraic geometry inspired by the formalism of toric geometry, has established many connections to matroid theory. From the tropical point of view, a Bergman fan --- the tropical variety corresponding to a loopless matroid --- can be considered an analog of the smooth tangent cone. Bergman fans are widely used in many of the latest developments (\cite{Huh}, \cite{CSM}, \cite{BEST}).
\par In particular, in the beginning of the 2010s, a well-behaved intersection theory of the tropical cycles in Bergman fans was developed simultaneously by Allermann, Esterov, Francois, Rau, and Shaw (\cite{Esterov}, \cite{Kris}, \cite{ARau}, \cite{FRau}). This intersection theory possesses many similarities with the intersection theory in algebraic geometry, going as far as the projection formula. This machinery was sufficient to define a tropical subcycle in the product of Bergman fans called the tropical fibre product (\cite{FHampe}, \cite{TFP2}), which the authors used to study moduli spaces of curves. They considered the case when the support of the tropical fibre product equals the set-theoretic fibre product of the supports of the factors, finding several sufficient conditions on matroids for this case to occur.
\par As it turns out, though, the tropical fibre product is worth investigating in wider generality. This becomes evident when considering a simple Example \ref{ex:tfps} (2).
\begin{introthm}[\ref{th:tfp}]
The tropical fibre product equals the Bergman fan of the proper amalgam if the proper amalgam exists, and has cones with negative weights otherwise.
\end{introthm}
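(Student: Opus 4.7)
The plan is to match top-dimensional cones on the two sides and then compare their weights. On the supports, a top-dimensional cone of $B(M_1) \times_{B(N)} B(M_2)$ should be cut out by a pair $(\mathcal{F}_1, \mathcal{F}_2)$ of maximal flags of flats of $M_1$ and $M_2$ whose traces on $E(N)$ coincide as a single maximal flag of flats in $N$. When the proper amalgam $P$ of $M_1$ and $M_2$ along $N$ exists, the same compatible pairs are known to parametrize the maximal flags of flats of $P$, giving a natural candidate bijection between the top cones of $B(P)$ and those of the fibre product.

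For the weights, the plan is to use the diagonal formula: the fibre product equals the tropical pullback of the diagonal of $B(N) \times B(N)$ along the product of the restriction maps $B(M_i) \to B(N)$. By the intersection-theoretic definition this reduces, at each compatible flag pair, to a local lattice-index computation in the star fan of $B(M_1) \times B(M_2)$ at the chosen point. I would show that this index evaluates to $1$ precisely when the candidate rank predicted by the proper amalgam's rank function is attained on the union of the two top flats; since $B(P)$ carries all weights equal to $1$, the two weighted fans then coincide cone-by-cone whenever $P$ exists, proving the first half of the statement.

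For the negativity statement, I would use the matroid-theoretic characterization that non-existence of the proper amalgam is witnessed by some compatible flag pair $(\mathcal{F}_1, \mathcal{F}_2)$ for which the candidate rank function fails to be submodular. I would then trace this failure through the local lattice computation: the rank excess translates into an intersection multiplicity that must be strictly negative, since the tropical pullback of the diagonal produces a signed correction term whose sign is controlled by the discrepancy between the rank of $F_1 \cup F_2$ in any putative common extension and the value $r_{M_1}(F_1) + r_{M_2}(F_2) - r_N(F_1 \cap F_2 \cap E(N))$. Example~\ref{ex:tfps}~(2), alluded to in the introduction, should provide both a sanity check and a source of the sign computation.

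The main obstacle, I expect, will be extracting from the local weight computation the sharp dichotomy ``all weights equal $1$'' versus ``at least one weight strictly negative''. In particular, I must rule out weights that are positive but different from $1$ in the bad case, and match the sign of the deficit in the matroid rank function with the sign of the intersection multiplicity. This will likely rest on a careful analysis of the star fans of the two Bergman factors together with the combinatorial content of the amalgam's rank formula; the generalization of Bergman fans to \emph{Flag fans} advertised in the abstract may well be the technical device that streamlines this step.
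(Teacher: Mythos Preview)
Your plan for the forward direction (proper amalgam exists $\Rightarrow$ fibre product equals its Bergman fan) is not the paper's route, and the version you sketch has a real gap. You propose to parametrize top cones of the fibre product by compatible flag pairs and then compute each weight via a ``local lattice-index''. But the tropical fibre product is \emph{defined} as an iterated Weil divisor $\prod_i \pi^*(\phi_i)\cdot B(M_1\oplus M_2)$, not as an intersection with a transverse cycle, so there is no lattice-index shortcut: the weight on a cone is the accumulated effect of the successive divisor cuts, and intermediate stages need not be Bergman fans. The paper instead compares, ray by ray, the diagonal-pullback functions $\pi^*(\phi_i)$ with the functions $\psi_i$ that cut $B(M)$ out of $B(M_1\oplus M_2)$ (Lemma~\ref{modcuts}), and shows they agree on every ray that survives to step $i$; this sidesteps any direct weight computation. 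Your claim that ``compatible pairs parametrize maximal flags of $P$'' is also not something you can simply invoke --- it is essentially equivalent to identifying the support of the fibre product with $|B(P)|$, which is part of what must be proved.

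The more serious gap is in the converse. You want to exhibit, from a submodularity failure of the candidate rank function, a specific cone with negative weight. The paper does not attempt this, and for good reason: tracing a rank obstruction through $r_0$ successive Weil divisors to a sign at the end is exactly the uncontrolled step you flag as the ``main obstacle'', and you offer no mechanism for it. The paper's argument is entirely different and much cleaner: it first shows, via the projection formula and Lemma~\ref{trres=restr}, that the fibre product always has \emph{degree~$1$}. Then Fink's theorem (Theorem~\ref{th:Fink}) gives the dichotomy for free: a degree-$1$ fan with all positive weights is the Bergman fan of some matroid $M$. Finally the Flag-fan description (Corollary~\ref{cor:tfpff}) shows $\rk_M=\eta$ on flats, so $M$ is the proper amalgam. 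Contrapositively, if no proper amalgam exists, the fan cannot have all positive weights. You never mention degree~$1$ or Fink's theorem; without them your dichotomy has no leverage, and ruling out ``positive but $\neq 1$'' weights by hand is not feasible. (Incidentally, the negative-weight example is Example~\ref{ex:tfps}(3), not (2); in (2) the proper amalgam exists.)
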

Thus, unlike all criteria for the existence of the proper amalgam known to us, Theorem \ref{th:tfp} is an equivalence. It is also constructive and, moreover, algorithmic in a certain sense, as shown in Section \ref{section:flagfans}.
\par With tropical counterpart of the proper amalgam in our hands, we seek a categorical justification of its name --- the fibre product. As it is justly pointed out in \cite{FHampe}, the tropical fibre product is not a categorical pullback even in the underlying category of sets. Thus, we aim to extend the notion of morphism of tropical fans to something that does not have to be a map between their supports. This leads naturally to the definition of the tropical correspondences category (Definition \ref{def:tropcor}). It formalizes the intuition behind ``not exactly graphs'' inspired by Example \ref{ex:simpletrcor}.
\par The whole category of tropical correspondences between Bergman fans is too unwieldy. In particular, since we allow the cones of the cycles to have negative weights, we cannot even require correspondences with negative weights to be irreducible. Thus, in Sections \ref{section:flagfans} and \ref{sect:tropicalgraphs} we develop a subcategory of ``covering graph correspondences'' (Definition \ref{def:graphcor}, Lemma \ref{lemma:covcorisff}) which is convenient from the combinatorial point of view. These covering graph correspondences between Bergman fans contain strictly more information than weak maps between the groundsets (Remark \ref{remark:notpullback}).
\begin{introthm}[\ref{th:functor}]
$$\{\mbox{weak lattice map}\} \to \{\mbox{graph correspondence}\}$$
is a functor from the category of covering lattice maps of flats of simple matroids with the usual composition to the category of tropical correspondences between Bergman fans.
\end{introthm}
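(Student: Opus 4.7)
The plan is to verify the two functor axioms --- that the graph construction sends identity lattice maps to identity tropical correspondences and that it preserves composition. Recall that, by Lemma \ref{lemma:covcorisff}, the graph correspondence of a covering lattice map is realized as a Flag fan inside the product of two Bergman fans, and that the identity morphism in the tropical correspondence category is the diagonal. For the identity axiom, I would check that the Flag fan associated to $\id_{L(M)}$ consists exactly of the diagonal cones of $B(M) \times B(M)$ with unit multiplicities; since every chain in $L(M)$ lifts uniquely to itself, this should follow directly from the definition of Flag fans in Section \ref{section:flagfans}.

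The substantive axiom is preservation of composition. Given composable covering lattice maps $f: L(M_1) \to L(M_2)$ and $g: L(M_2) \to L(M_3)$, I must show that in $B(M_1) \times B(M_3)$,
\[
\Gamma(g \circ f) \;=\; (\pi_{13})_* \bigl( \pi_{12}^{*}\, \Gamma(f) \;\cdot\; \pi_{23}^{*}\, \Gamma(g) \bigr),
\]
where the right-hand side is the composition of tropical correspondences and the product denotes stable tropical intersection inside $B(M_1) \times B(M_2) \times B(M_3)$. My strategy is to introduce an intermediate Flag fan $\Gamma(f, g)$ in the triple product whose maximal cones are indexed by compatible triples of chains $C_1, C_2, C_3$ in $L(M_1), L(M_2), L(M_3)$ with $C_2$ a lifting of $f(C_1)$ and $C_3$ a lifting of $g(C_2)$, and then to establish separately that (a) $\Gamma(f, g) = \pi_{12}^{*}\,\Gamma(f) \cdot \pi_{23}^{*}\,\Gamma(g)$ as tropical cycles, and (b) $(\pi_{13})_*\,\Gamma(f, g) = \Gamma(g \circ f)$.

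Step (b) should follow from the combinatorics of Flag fans: pushforward along $\pi_{13}$ collapses the middle coordinate and sums multiplicities fibre-by-fibre, which on the lattice side corresponds to forgetting the intermediate chain $C_2$. The covering hypothesis keeps these fibres finite, and the multiplicities of $\Gamma(g \circ f)$ should reproduce the required counts by a direct check analogous to the multiplicativity of ramification indices under composition.

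Step (a) is where I expect the main obstacle. The cycles $\pi_{12}^{*}\,\Gamma(f)$ and $\pi_{23}^{*}\,\Gamma(g)$ in the triple product glue along the shared $B(M_2)$-direction, and computing their stable intersection requires controlling multiplicities on a cone-by-cone basis via star fans. Here I would appeal to Theorem \ref{th:tfp}: locally around each compatible chain, the relevant star fan intersection is a proper amalgam situation, and the covering assumption guarantees positivity of this local fibre product, so no negatively weighted cones appear and the stable intersection agrees set-theoretically with the Flag fan. Matching the resulting positive multiplicities with those defining $\Gamma(f, g)$ is the combinatorial core of the argument, and I would carry it out by reducing to the rank-one (cover-by-cover) case and iterating.
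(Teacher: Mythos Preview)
Your Step (a) contains a genuine error. You propose to invoke Theorem \ref{th:tfp} locally and conclude that, under the covering hypothesis, ``no negatively weighted cones appear and the stable intersection agrees set-theoretically with the Flag fan.'' But graph correspondences of covering lattice maps are \emph{not} positive in general: already in Example \ref{ex:simpletrcor} the Flag fan $\Gamma_f$ has an edge of weight $-1$, and Theorem \ref{th:grcorstr} shows that the edge weights of $\Gamma_f$ are values $\eta_{\mathcal{F}(X_1,X_2,Y_1)}(X_1)$ of a M\"obius-type function, which are routinely negative. So neither the factors $\pi_{12}^*\Gamma_f$, $\pi_{23}^*\Gamma_g$ nor their intersection can be assumed effective, and the set-theoretic matching you hope for is not available. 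Moreover, Theorem \ref{th:tfp} is a statement about tropical fibre products arising from restrictions $M_i|_T=N$ via pullback of the diagonal functions of $B(N)$; the intersection $\pi_{12}^*\Gamma_f \cdot \pi_{23}^*\Gamma_g$ is not of this form, since the ``shared $B(M_2)$-direction'' carries the full graph of $g$, not a restriction of a single matroid. There is no proper-amalgam interpretation of this local intersection to which Theorem \ref{th:tfp} applies.

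Your Step (b) is also too optimistic. The analogy with multiplicativity of ramification indices suggests that pushforward along $\pi_{13}$ is a simple counting of preimage cones, but the weights you must sum are products of M\"obius values, and the identity you need is not a product formula but rather the telescoping sum of Lemma \ref{lemma:muproperty}. The paper's route is to first establish the explicit edge-weight formula $\omega_{\mathcal{G}}((X_1,Y_1),(X_2,Y_2))=\eta_{\mathcal{F}(X_1,X_2,Y_1)}(X_1)$ for $\Gamma_f$ (Theorem \ref{th:grcorstr}), then prove the analogous formula for the triple intersection $\prod \pi_{XY}^*(\gamma_i(f))\cdot(B_X\times\Gamma_g)$ (Lemma \ref{lemma:XYZ}), and finally verify cone-by-cone that the pushforward weight $\sum_{Y_0\leqslant\ldots\leqslant Y_r}\prod_i \omega_{\mathcal{G}_{YZ}}\cdot\omega_{\mathcal{G}_{XY}}$ collapses to the weight in $\Gamma_{g\circ f}$ via repeated application of $\sum_{a\leqslant b}\mu(a,b)=1$. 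The key technical content lies in tracking these signed weights explicitly; an argument that assumes positivity or treats the multiplicities as mere lattice indices will not close.
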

\par The paper is organized as follows. In Section \ref{sect:prelim}, we recall necessary notions from matroid theory and tropical geometry, including the tropical fibre product from \cite{FHampe} --- Definition \ref{def:tfp}. In Section \ref{section:flagfans}, we introduce the generalization of Bergman fan called a Flag fan and observe its nice behavior with respect to Weil divisors of rational functions generalizing characteristic functions of modular cuts (Lemma \ref{lemma:ff}). Section \ref{sect:tfp} is solely devoted to the proof of Theorem \ref{th:tfp}. In Section \ref{sect:tropicalgraphs}, we introduce and study tropical correspondences, construct covering graph correspondences, establish their useful combinatorial description (Theorem \ref{th:grcorstr}), and verify the functoriality (Theorem \ref{th:functor}). We conclude with an open Question \ref{question}. Resolving it positively will achieve the best scenario still plausible --- that there is a full forgetful functor from the reasonable subcategory of tropical correspondences onto the category of weak maps.
\par I am grateful to Evgeniya Akhmedova, Alexander Esterov, Tali Kaufman, Anna--Maria Raukh, and Kris Shaw for useful discussions. I want to thank Alexander Zinov for helping me with software calculations which often preceded proofs.

\section{Preliminaries} \label{sect:prelim}
\subsection{Matroid theory} In this subsection we recall the necessary notions from the combinatorial matroid theory.
\begin{definition} \label{def:matroid}
A \textit{matroid} $M$ on the \textit{groundset} $E$ is a collection of subsets $\mathcal{F}(M) = \mathcal{F} \subset 2^E$ called \textit{flats} with the following properties:
\begin{itemize}
\item $E \in \mathcal{F}$;
\item $F_1,F_2 \in \mathcal{F} \Rightarrow F_1 \cap F_2 \in \mathcal{F}$;
\item $\forall F \in \mathcal{F} \colon E \setminus F = \bigsqcup_{F' \gtrdot F} F' \setminus F$, where $X \gtrdot Y$ is read ``$X$ \textit{covers} $Y$'' and means $X \supsetneq Y , \; \nexists Z \colon X \supsetneq Z \supsetneq Y$.
\end{itemize}
A matroid is called \textit{loopless} if $\varnothing \in \mathcal{F}$. It is called \textit{simple} if, additionally, all flats covering $\varnothing$ are one-element subsets (in this case all one-element subsets are flats). All matroids in this paper are assumed loopless.
A \textit{restriction} of the matroid $M$ on the groundset $E$ onto the subset $T \subset E$, denoted by $M|_T$, is an image of the map $\mathcal{F} \to 2^T$ given by $F \mapsto F \cap T$. An \textit{extension} of $M$ onto $E' \supset E$ is a matroid $M'$ on $E'$ such that $M'|_{E} = M$. A \textit{contraction} of $F \in \mathcal{F}(M)$ is a matroid denoted by $M/F$ on the groundset $E \setminus F$ with flats $F' \setminus F$ for $F \subset F' \in \mathcal{F}(M)$. 
\end{definition}
There is a large number of equivalent definitions of a matroid, usually called cryptomorphisms. Many of them can be found in \cite{Oxley}, Chapter 1. It can be shown that $\mathcal{F}$ forms a ranked poset with respect to the partial order given by set inclusion. Since intersection of flats is also a flat, there exists an operator of \textit{closure} $\cl \colon 2^E \to 2^E$, where $\cl_M(X) = \cap_{F \supset X} F$. Moreover, for each pair of flats $F_1$ and $F_2$ there exists a unique minimal element $F \in \mathcal{F}$ such that $F \supset F_1,F_2$, called \textit{join} and denoted by $F_1 \vee F_2$. The \textit{rank function} can be extended from $\mathcal{F}$ to the whole $2^E$ by defining 
$$\rk_M X \defeq \min_{F \supset X}{\rk F}.$$ A subset $X \subset E$ is called \textit{independent} if $\rk X = |X|$. The \textit{rank} of $M$ is defined to be $\rk_M E$. The \textit{hyperplanes} are flats of rank $\rk M - 1$.
\par
Throughout the paper, we sometimes describe matroids via pictures instead of listing their flats. The points correspond to the elements of the groundset, while lines and planes correspond to flats. Those are instances of matroids \textit{representable} over $\rr$ drawn in the projectivization of the vector space of representation (more on that in \cite{Oxley}, Chapter 1).
\par Next we prove a simple fact needed in Sections \ref{sect:tfp} and \ref{sect:tropicalgraphs}:
\begin{lemma} \label{lemma:rankunion}
The difference of ranks can only decrease when adding elements. More precisely, if $A \supset B$, then
$$\rk_M (A \cup C) - \rk_M (B \cup C) \leqslant \rk_M A - \rk_M B.$$
\end{lemma}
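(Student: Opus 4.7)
The plan is to deduce the lemma from the submodularity of the matroid rank function, namely
\[
\rk(X\cup Y)+\rk(X\cap Y)\leq \rk X+\rk Y \quad\text{for all } X,Y\subset E.
\]
Submodularity is one of the classical cryptomorphic consequences of the matroid axioms (\cite{Oxley}, Chapter~1), and it follows from the flat-based Definition~\ref{def:matroid} together with the extension $\rk_M X\defeq \min_{F\supset X}\rk F$.

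Assuming submodularity, I would apply it with $X=A$ and $Y=B\cup C$. Since $B\subset A$, we have $A\cup(B\cup C)=A\cup C$ and $A\cap(B\cup C)\supset B$; monotonicity of the rank function (which is immediate from the min-over-flats formula) then gives $\rk_M(A\cap(B\cup C))\geq \rk_M B$. Substituting these two observations into submodularity yields
\[
\rk_M(A\cup C)+\rk_M B\;\leq\; \rk_M A+\rk_M(B\cup C),
\]
which rearranges to the claimed inequality.

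The argument is short and essentially mechanical once submodularity is available. The only genuine obstacle is whether one wants a self-contained derivation from the flat-based definition rather than a citation. If so, the cleanest alternative is induction on $|C\setminus B|$: the case $|C\setminus B|=1$ says precisely that adjoining a single element $e$ to the larger set $A$ cannot raise the rank by more than adjoining $e$ to $B$, which is the ``local'' form of submodularity and can be checked directly from the covering axiom by comparing the flats $\cl_M(A)$, $\cl_M(A\cup\{e\})$, $\cl_M(B)$, $\cl_M(B\cup\{e\})$. The general case then follows by adding elements of $C\setminus B$ one at a time and telescoping.
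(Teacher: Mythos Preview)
Your argument is correct. The one-shot application of submodularity with $X=A$ and $Y=B\cup C$ is clean and valid; monotonicity gives $\rk_M(A\cap(B\cup C))\geq \rk_M B$, and the rest is arithmetic.

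The paper takes the route you sketch as your alternative: it adds the elements of $C$ one at a time, reducing to $C=\{x\}$, and then checks the single-element case via the closure operator (namely, $x\notin\cl A\Rightarrow x\notin\cl B$ because $\cl A\supset\cl B$). So your secondary argument is essentially the paper's proof. Your primary argument is slightly slicker in that it invokes submodularity once rather than telescoping; the paper's version, in exchange, stays closer to the flat-based Definition~\ref{def:matroid} and avoids naming submodularity explicitly. Both are equally elementary once the relevant rank axiom is granted.
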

\begin{proof}
We can add elements of $C$ one by one, so take any $x \in C$ and let us prove that
$$\rk_M (A \cup \{x\}) - \rk_M (B \cup \{x\}) \leqslant \rk_M A - \rk_M B.$$
The rank function can only increase by 1 when adding an element (\cite{Oxley}, Section 1), and this happens exactly when this element does not belong to the closure of the set, so what we need follows from the implication
$$x \notin \cl A \Rightarrow x \notin \cl B,$$
which is true, since $A \supset B \Rightarrow \cl A \supset \cl B$.
\end{proof}
\begin{definition} \label{def:pramalg}
Consider sets $E_1,E_2, T = E_1 \cap E_2$ and matroids $M_1$ on $E_1$, $M_2$ on $E_2$, $N$ on $T$ such that $M_1|_T = N = M_2|_T$. A matroid $M$ on $E_1 \cup E_2$ is called an \textit{amalgam} of $M_1$ and $M_2$ along $N$ if $M|_{E_1} = M_1$ and $M|_{E_2} = M_2$.
\par An amalgam $M$ is called \textit{free} if for every amalgam $M'$ and every independent set $X$ in it, $X$ is also independent in $M$.
\par An amalgam $M$ is called \textit{proper} if, for every $F \in \mathcal{F}(M)$ the following holds:
$$\rk_M(F) = \eta(F) \defeq \rk_{M_1}(F \cap E_1) + \rk_{M_2}(F \cap E_2) - \rk_{N}(F \cap T).$$
\end{definition}
Both free and proper amalgams are unique, if they exist. This is not always the case, though: sometimes there is no free amalgam, sometimes no amalgams at all (see \cite{Oxley}, Section 11.4). It can be shown that the proper amalgam is always free (\cite{Oxley}, Propositions 11.4.2 and 11.4.3).
\begin{remark}
In \cite{Oxley} Proposition 11.4.2 defines proper amalgam as an amalgam where the rank of any set $X \subset E$ satisfies $\rk X = \min{\{\eta(Y) \colon Y \supseteq X\}}$, and Proposition 11.4.3 establishes equivalence with the definition that we give in Definition \ref{def:pramalg}.
\end{remark}
The free amalgam is not always proper, though, as shown by the following
\begin{example} \label{ex:cross}
\begin{figure}[h]
\includegraphics[width=\textwidth]{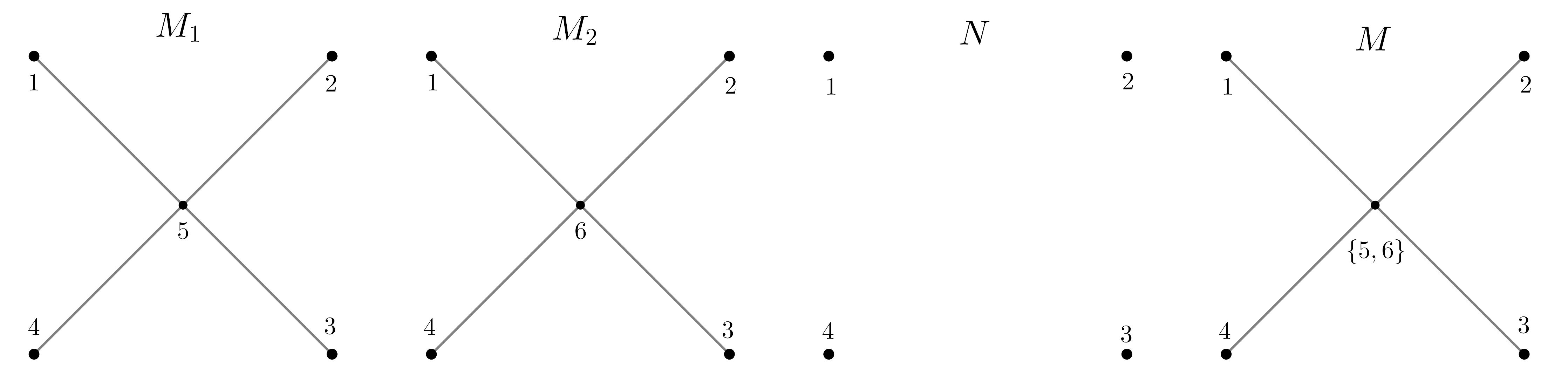}
\caption{$M$ is the free amalgam but not the proper.}
\label{fig:cross}
\end{figure}
For $M_1,M_2,N$ as shown on Figure \ref{fig:cross} $M$ is the only amalgam and hence free, but
$$1 = \rk_M \{5,6\} \neq \rk_{M_1} \{5\} + \rk_{M_2} \{6\} - \rk_N \varnothing = 2$$
\end{example}
\begin{definition} \label{def:strongweakmaps}
A map $f \colon E_1 \to E_2$ between groundsets of matroids $M_1$ and $M_2$ is called a \textit{strong map} if $f^{-1}(F) \in \mathcal{F}(M_1)$ for $F \in \mathcal{F}(M_2)$. A strong map is called an \textit{embedding} if $M_2|_{f(E_1)}=M_1$.
\par
A map $f \colon E_1 \to E_2$ between groundsets of matroids $M_1$ and $M_2$ is called a \textit{weak map} if for any $X \in E_1 \rk_{M_1}(X) \geqslant \rk_{M_2}(f(X))$. Equivalently (\cite{Oxley}, Proposition 7.3.11), $f^{-1}(X)$ is independent in $M_1$ for any independent set $X$ of $M_2$. We will sometimes write $f \colon M_1 \to M_2$ instead of $f \colon E_1 \to E_2$.
\end{definition}
It can be shown that strong maps are also weak maps (\cite{Oxley}, Corollary 7.3.12). Note that the free amalgam $M$ has the following universal property in the category of matroids with weak maps. If $\iota_{1,2} \colon N \to M_{1,2}$ and $j_{1,2} \colon M_{1,2} \to M$ are embeddings, then
\begin{equation} \label{eq:univprop}
\begin{split}
\forall M', \; \mbox{embeddings } \; j'_{1,2} \colon M_{1,2} \to M' \mbox{ such that } j'_1 \circ \iota_1 = j'_2 \circ \iota_2 \\
\exists ! \; \mbox{ weak map } \; f \colon M \to M' \mbox{ such that } f \circ j_1 = j'_1, f \circ j_2 = j'_2.
\end{split}
\end{equation}
It is not, strictly speaking, a universal property of the pushout, because allowing $j'_{1,2}$ to be any weak maps may lead to rank of $N$ dropping, and then $M'$ may become ``freer'' than any amalgam.
\begin{definition}
A pair of flats $F_1,F_2 \in \mathcal{F}(M)$ is called a \textit{modular pair} if
$$\rk{F_1} + \rk{F_2} = \rk{(F_1 \vee F_2)} + \rk{(F_1 \wedge F_2)}.$$
The flat $F \in \mathcal{F}(M)$ is called \textit{modular} if it forms a modular pair with any $F' \in \mathcal{F}(M)$.
\end{definition}
\begin{definition}
A subset $\mathcal{M}$ of flats of $M$ on groundset $E$ is called a \textit{modular cut} if it satisfies the following properties:
\begin{itemize}
\item $E \in \mathcal{M}$;
\item $F_1 \in \mathcal{M}, F_2 \ni F_1 \Rightarrow F_2 \in \mathcal{M}$;
\item if $F_1,F_2 \in \mathcal{M}$ are a modular pair, then $F_1 \wedge F_2 \in \mathcal{M}$.
\end{itemize}
\end{definition}
\subsection{Tropical geometry} In this subsection we recall the tropical counterpart of matroid theory involving the techniques needed for our results.
\begin{definition}
A (reduced or projective) \textit{Bergman fan} of matroid $M$ on groundset $E$ is a collection of maximal cones $\sigma_{\mathbf{F}}$ and their faces in $\rr^E / \langle (1, \ldots, 1) \rangle$ for each \textit{flag of flats} $\mathbf{F}$ consisting of $\varnothing = F_0 \lessdot  F_1 \lessdot  \ldots \lessdot  F_r = E$ with $F_i \in \mathcal{F}(M)$ defined as
$$\sigma_{\mathbf{F}} = \langle e_{F_1}, \ldots, e_{F_{r-1}} \rangle,$$
where $e_X$ stands for the image in $\rr^E / \langle (1, \ldots, 1) \rangle$ of the characteristic function of $X \subset E$ in $\rr^E$, i.e., $e_X(x)=1$ if $x \in X$ and $e_X(x)=0$ otherwise.
\par
A \textit{non-reduced} or \textit{affine} Bergman fan, denoted by $B(M)$, is the preimage of the projective Bergman fan in $\rr^E$.
\end{definition}
An affine Bergman fan is an instance of the main object we work with in this paper.
\begin{definition}[\cite{ARau}, Definition 2.6] \label{def:tropicalfan}
A $k$-dimensional \textit{tropical fan} $X$ in $\rr^n = \zz^n \otimes_{\zz} \rr$ is a collection of cones $\sigma$ of dimension $k$, generated by the vectors from the lattice $\zz^n$, with integer weights $\omega(\sigma)$ such that the \textit{balancing condition} holds for each $(k-1)$-dimensional face $\tau$:
$$\sum_{\sigma \supset \tau} \omega(\sigma) \cdot u_{\sigma / \tau} = 0 \in V / V_{\tau},$$
where $V_{\rho}$ is the smallest subspace of $\rr^n$ containing cone $\rho$, and $u_{\rho / \rho'}$ is a normal vector --- the generator of the one-dimensional $\zz$-module $(V_{\rho} \cap \zz^n) / (V_{\rho'} \cap \zz^n)$ chosen such that it ``looks inside'' $\rho$. The \textit{support} of the fan $X$, denoted by $|X|$, is the set of points in $\rr^n$ that belong to some cone.
\end{definition}
\par In all our considerations all the cones are going to be simplicial. Moreover, the only generator of cone $\rho$ missing from the generators of $\rho'$ is always going to map to $u_{\rho / \rho'}$ when taking quotient by $V_{\rho'}$ --- not some multiple of $u_{\rho / \rho'}$, which means that this generator can be taken as a representative in all calculations.
\par The maximal cones of the affine Bergman fan are the following:
$$\langle e_{F_1}, \ldots, e_{F_{r-1}}, e_{F_{r}} = (1, \ldots, 1) \rangle, \langle e_{F_1}, \ldots, e_{F_{r-1}}, -e_{F_{r}} = (-1, \ldots, -1) \rangle,$$
and the weights are all equal to 1 (slightly abusing notations, we now mean by $e_F$'s actual characteristic vectors in $\rr^E$, not their classes in the quotient $\rr^E / \langle (1, \ldots, 1) \rangle$). Thus, one can check that the balancing condition for any $(r-1)$-dimensional cone containing the all-ones or minus all-ones boils down to the third axiom (the covering axiom) of Definition \ref{def:matroid}; and the balancing condition for a $(r-1)$-dimensional cone not containing the all-ones or minus all-ones is trivially satisfied, as any such cone is contained in just two $r$-dimensional cones which cancel each other out in the balancing condition.
\par It is convenient to consider the \textit{refinements} of tropical fans (\cite{ARau}, Definition 2.8), which are essentially the tropical fans with the same support and the same weight for the interior points of maximal cones. This leads to
\begin{definition}[\cite{ARau}, Definitions 2.12, 2.15]
A \textit{tropical cycle} is a class of equivalence of tropical fans with respect to equivalence relation ``to have a common refinement''. A \textit{tropical subcycle} $Z$ of the tropical cycle $X$ is a tropical cycle whose support is a subset: $|Z| \subset |X|$.
\end{definition}
Apart from the very general Definition \ref{def:tropcor}, all the cones in the subcycles will be the faces of the simplicial maximal cones of the fans they lie in.
\par 
Considering tropical cycles allows to define new tropical cycles without explicitly listing all the cones, but describing the support and the weights of the interior points of the maximal cones instead. This is useful to us when introducing products and stars.
\begin{definition}
The \textit{product} of tropical fans $X \in V_X$ and $Y \in V_Y$ is a tropical cycle $X \times Y \in V_X \times V_Y$ such that $|X \times Y| = |X| \times |Y|$, and the weight of $(x,y) \in X \times Y$ is equal to the product of $x \in X$ and $y \in Y$ for $x,y$ interior points of the maximal cones of $X,Y$.
\end{definition}
It is known that the tropical cycles $B(M \oplus N)$ and $B(M) \times B(N)$ coincide (\cite{FRau}, Lemma 2.1), but the fans are different --- $B(M \oplus N)$ is a refinement of $B(M) \times B(N)$ (see also Lemma \ref{lemma:ffproduct}).
\begin{definition} \label{def:star}
Let $X \subset \rr^n$ be a tropical fan and $p \in \rr^n$ a point (not necessarily in $X$). The \textit{star} of $p$ in $X$ denoted by $\mathrm{Star}_X(p)$ is a tropical fan with support
$$\{v \in \rr^n \colon \exists \; c>0 \; \forall \; 0<c'<c \; \; p + c' \cdot v \in |X| \}$$
and the weight of $v$ equal to the weight of $p + c' \cdot v$ for interior points of maximal cones. In particular, the star is empty if and only if $p$ does not belong to the support of $X$.
\end{definition}
If $B(M)$ is a Bergman fan, and $p$ is an interior point of the cone of any dimension generated by the flag $\varnothing = F_0 \subsetneq F_1 \subsetneq \ldots F_k = E$ of flats $F_i \in \mathcal{F}(M)$, then the explicit description of the $\mathrm{Star}_{B(M)}(p)$ is known (see, for example, \cite{FRau}, Lemma 2.2):
$$B(M|_{F_1}/F_0) \times B(M|_{F_2}/F_1) \ldots \times B(M|_{F_k}/F_{k-1}).$$
\par The crucial ingredient of the tropical intersection theory is the possibility to cut out Weil divisors of what is called ``rational functions'' on tropical fans.
\begin{definition}[\cite{ARau}, Section 3] \label{def:Weil}
A \textit{rational function} on the tropical fan is a continuous function on the union of its maximal cones which is linear on each of them. A \textit{Weil divisor} of the rational function $\phi$ on the $k$-dimensional tropical fan $X$ is a $(k-1)$-dimensional tropical subfan denoted by $\phi \cdot X$ with the weight on the $(k-1)$-dimensional cone $\tau$ of $X$ given by
$$\omega_{\phi \cdot X} (\tau) = \sum_{\sigma \supset \tau} \omega_X (\sigma) \phi(v_{\sigma / \tau}) - \phi \left( \sum_{\sigma \supset \tau} \omega_X (\sigma) v_{\sigma / \tau} \right),$$
where $v_{\sigma / \tau}$ is any representative of the primitive vector $u_{\sigma / \tau}$ in the ambient space. One shows that this definition does not depend on the choice of representatives.
\par More elegant, but not as useful in calculations, is the equivalent definition of associated Weil divisor --- that it is the $\rr^n$-section of the only way to ``balance'' the graph of $\phi$ in $\rr^{n+1} = \rr^n \times \rr$, i.e., to add cones containing direction along the last coordinate corresponding to the value of $\phi$ with some weights, so that the disbalance brought by the non-linearity of $\phi$ on the hyperface junctions of maximal cones is fixed. Thus, the divisor has zero weights if the function is globally linear (mind that the converse is not true, but the counter-examples are not going to occur in this paper).
\end{definition}
In the general setting, it is common to take advantage of the opportunity to refine the fan, then take the fine rational function which is not conewise linear before the refinement and consider this to be a rational function on the tropical cycle given by the equivalence class of the fan. In our setting, though, values on the primitive vectors of the generating rays of cones (extended conewise linearly) always suffice for the functions at play; so we do not concern ourselves with the details and refer for them to \cite{ARau}. We may also sometimes abuse the notation and say ``value on F'' meaning value on the primitive vector of the one-dimensional cone $\langle e_F \rangle$ which is $e_F$ itself.
\par Finally, because of the technical necessity to work with affine fans, all our functions are assumed to be linear on the \textit{lineality space} $\langle (1 \ldots, 1) \rangle$, which means that $\phi((-1 \ldots, -1)) = -\phi((1 \ldots, 1))$. This guarantees that all the maximal cones of the Weil divisors contain either $e_E$ or $-e_E$ as well. Henceforth, when verifying something about the fan, we do it only for positive cones, assuming that negative cones are treated exactly the same.
\par The most ubiquitous rational function $\alpha$ is described in the following
\begin{definition} \label{def:truncation}
A \textit{truncation} of the Bergman fan $B(M)$ is the Weil divisor of the function $\alpha$ which is equal to $-1$ on $E$ and $0$ on every other ray. It can be shown (a simple partial case of Lemma \ref{modcuts} ahead) that the resulting fan is again a Bergman fan of the \textit{truncated} matroid, denoted by $\mathrm{\mathop{Tr}}(M)$, whose flats are the same as of $M$ except for the hyperplanes.
\end{definition}
Cutting out a divisor of $\alpha$ is analogous in tropical geometry to taking a general hyperplane section in algebraic geometry, and thus
\begin{definition}
The \textit{degree} of the $k$-dimensional affine tropical fan $X$ is the weight of the cone $\langle e_E \rangle$ in the fan $\underbrace{\alpha \cdots \alpha}_{k-1} \cdot X$.
\end{definition}
It is easy to see that Bergman fans have degree 1. Moreover,
\begin{theorem}[\cite{Fink}, Theorem 6.5] \label{th:Fink}
Tropical fan of degree 1 with positive weights is a Bergman fan of matroid.
\end{theorem}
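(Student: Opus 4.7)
I would approach the proof by induction on the dimension $d$ of the tropical fan $X \subset \mathbb{R}^E$, reconstructing the matroid $M$ one layer of flats at a time via the truncation operation. For the base case $d=1$, the balancing condition (summing weighted primitive ray vectors to zero in $\mathbb{R}^E$), positivity of weights, and the normalization that $\langle e_E \rangle$ carries weight $1$ combine to force $X = \langle e_E \rangle \cup \langle -e_E \rangle$, which is $B(M)$ for the rank-$1$ loopless matroid on $E$.

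For the inductive step $d > 1$, I would consider the Weil divisor $\alpha \cdot X$ of the truncation function of Definition \ref{def:truncation}. From the recursive definition of degree via iterated cutting, $\alpha \cdot X$ inherits degree $1$. The central technical claim is that $\alpha \cdot X$ still has positive weights: using the formula of Definition \ref{def:Weil} at each codimension-$1$ cone $\tau$ of $X$, one expands
$$\omega_{\alpha\cdot X}(\tau) = \sum_{\sigma \supset \tau} \omega_X(\sigma)\, \alpha(v_{\sigma/\tau}) - \alpha\!\left(\sum_{\sigma \supset \tau} \omega_X(\sigma)\, v_{\sigma/\tau}\right),$$
then uses that $\alpha$ is supported only on the $\langle e_E \rangle$ ray and combines this with positivity of $\omega_X$. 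By the inductive hypothesis, $\alpha \cdot X = B(M')$ for a rank-$(d-1)$ matroid $M'$.

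To reconstruct $M$, I would have $M'$ coincide with the truncation of $M$, so the flats of $M$ of rank $<d-1$ are already recorded in $M'$, while the hyperplanes of $M$ (the flats that disappear under truncation) must correspond to rays of $X$ not present in $B(M')$. Setting up $M$ this way, I would verify the flat axioms of Definition \ref{def:matroid}, especially the covering axiom, which should be forced by the balancing condition of $X$ at codimension-$2$ cones of $B(M')$ combined with the fact that the maximal cones of $X$ must carry weight $1$. Finally, $X = B(M)$ would follow by matching supports and weights of maximal cones.

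The hardest step I anticipate is preserving positivity under cutting with $\alpha$; the Weil divisor of a general balanced fan can easily produce negative weights, so both the positivity of $\omega_X$ and the very restricted form of $\alpha$ must be exploited jointly. A secondary subtlety is checking that the ``extra'' rays of $X$ actually organize into a valid hyperplane system of a matroid extending $M'$, which is the combinatorial heart of the argument and is precisely where the degree $1$ hypothesis must be used in an essential way.
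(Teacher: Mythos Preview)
The paper does not prove this statement itself; it is quoted from \cite{Fink} and used as a black box, with the remark that Fink's argument (for tropical varieties in general) ``may be considered an overkill for our needs.'' So there is no in-paper proof to compare against, and your proposal must stand on its own.

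Your induction-via-truncation outline is reasonable, but both steps you flag as hard are genuine gaps.

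For positivity of $\alpha\cdot X$: the phrase ``$\alpha$ is supported only on the $\langle e_E\rangle$ ray and combines this with positivity of $\omega_X$'' is not an argument. What actually rescues this step is that $\alpha$ coincides with the globally defined convex, positively homogeneous function $-\min_{i\in E} x_i$ on $\rr^E$; convexity together with homogeneity lets you apply Jensen's inequality inside the Weil-divisor formula to conclude $\omega_{\alpha\cdot X}(\tau)\geq 0$ whenever all $\omega_X(\sigma)\geq 0$. If instead you define $\alpha$ ray-by-ray on the unknown fan $X$, there is no reason for it to be convex and the step fails. You need to make this identification explicit.

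The reconstruction step is the more serious gap. Even granting $\alpha\cdot X=B(M')$, you assert that ``the hyperplanes of $M$ \ldots\ must correspond to rays of $X$ not present in $B(M')$,'' but nothing you have said forces the extra rays of $X$ to be indicator vectors $e_H$ of subsets $H\subset E$ at all --- a priori $X$ could have rays in arbitrary primitive lattice directions. Establishing that every ray of a positive degree-$1$ fan is a $0/1$-vector, that the maximal cones all carry weight exactly $1$, and that the resulting subsets satisfy the flat axioms is precisely the content of the theorem; it does not fall out of balancing at codimension-$2$ faces of $B(M')$ alone. Your proposal describes what must hold once one already knows $X=B(M)$, rather than deriving it. This is where Fink's actual proof does real work, via the structure theory of tropical linear spaces rather than by peeling off one $\alpha$ at a time.
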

Theorem \ref{th:Fink} was proven in \cite{Fink} in more general setting, for tropical varieties, not only for fans, and may be considered an overkill for our needs, where we could use techniques developed in Section \ref{section:flagfans} instead, but we believe it is a beautiful result, and we are happy to apply it when relevant. Note that without requiring the weights to be positive this is not true, of course, as we are going to see many times.
\par To define tropical fibre product and to work with tropical intersections effectively, it remains to recall the definition of tropical morphism, the constructions of the pullback and the push-forward.
\begin{definition}[\cite{ARau}, Definition 4.1]
A \textit{morphism} $f \colon X \to Y$ between tropical fans $X \subset \zz^m \otimes_{\zz} \rr$ and $Y \subset \zz^n \otimes_{\zz} \rr$ is a map from the support of $X$ to the support of $Y$ induced by a $\zz$-linear map $\tilde{f} \colon \zz^m \to \zz^n$.
\end{definition}
In full generality, the maximal cone $\sigma$ of $X$ maps to some subset $f(\sigma) \subset \sigma'$ of $Y$. For instance, the embedding of the refinement into the unrefined fan is a tropical morphism. For tropical morphisms we consider, though, such thing is not going to happen, as generating rays of cones of $X$ always map to the generating rays of cones of $Y$. This observation also makes the next definitions simpler.
\begin{definition}[\cite{ARau}, Proposition 4.7]
Let $f \colon X \to Y$ be a morphism of tropical fans, and let $\phi$ be a rational function on $Y$ with $\phi \cdot Y$ its Weil divisor. Then the \textit{pullback} of $\phi$ is a function $f^*(\phi) \colon X \to \rr$ defined as $f^*(\phi)(x) = \phi(f(x))$, and the pullback of the Weil divisor $\phi \cdot Y$ is the Weil divisor $f^*(\phi) \cdot X$.
\par One shows that pullback is a linear map from divisors on $Y$ to divisors on $X$.
\end{definition}
\begin{definition} \label{def:pushforward}
Let $f \colon X \to Y$ be a morphism of tropical fans lying in spaces $\zz^m \otimes_{\zz} \rr$ and $\zz^n \otimes_{\zz} \rr$, respectively. Let $Z \subset X$ be a subcycle. Then, the \textit{push-forward} $f_*(Z)$ is defined as a tropical subcycle of $Y$ consisting of maximal cones $f(\sigma)$ for $\sigma$ the cone of $X$ if $f$ is injective on $\sigma$ (and their faces). The weight on the maximal cone should be defined by the formula
$$\omega_{f_*Z}(\sigma')=\sum_{f(\sigma)=\sigma'} \omega_Z(\sigma) \cdot \lambda(\sigma',\sigma),$$
where $\lambda(\sigma',\sigma)$ is the index of the sublattice $f(V_{\sigma} \cap \zz^m)$ in the lattice $f(V_{\sigma'} \cap \zz^n)$ (in the notation of Definition \ref{def:tropicalfan}).
\end{definition}
Fortunately, again, in our setting the integer lattice map is always surjective, therefore, the index of the sublattice is always $1$. The sum in the definition of the weight is important, though, as there can be several maximal cones of $X$ mapping to the same maximal cone of $Y$.
\par Now we are almost ready to recall the main notion for Theorem \ref{th:tfp} --- the tropical fibre product. Its only ingredient not covered so far is the construction of ``diagonal'' rational functions from Definition \ref{diagconstr}, which arise as the partial case of more general construction from Lemma \ref{modcuts}. For the clarity of exposition, the definition is postponed to Section \ref{section:flagfans}, so the reader can either look at functions $\phi_i$ as a black box for now, or comprehend their definition as a separate construction.
\begin{definition}[\cite{FHampe}, Definition 3.6] \label{def:tfp}
Let $M_1$ and $M_2$ be matroids on groundsets $E_1$ and $E_2$, respectively. Let $N$ be their common restriction, i.e., a matroid on groundset $T = E_1 \cap E_2$ such that $M_1|_{T} = N = M_2|_{T}$. Let $\pi_1 \colon B(M_1) \to B(N)$ and $\pi_2 \colon B(M_2) \to B(N)$ be projections induced by the embeddings $T \hookrightarrow E_1$ and $T \hookrightarrow E_2$. Let $\phi_1 \cdots \phi_r \cdot B(N) \times B(N) = \Delta \subset B(N) \times B(N)$, like in Definition \ref{diagconstr}, and let $\pi = \pi_1 \times \pi_2$. Then, the \textit{tropical fibre product} $B(M_1) \times_{\pi_1, B(N), \pi_2} B(M_2)$ is a cycle in $B(M_1) \times B(M_2)$ defined as
$$\pi^*(\phi_1) \cdots \pi^*(\phi_r) \cdot B(M_1) \times B(M_2) \subset B(M_1) \times B(M_2).$$
\end{definition}
\begin{example} \label{ex:tfps}
We consider three instances of the tropical fibre product to familiarize the reader with the notion:
\begin{enumerate}
\item Assume $T$ is a modular flat of one of the matroids, say, $M_1$. In this case, $\pi_1 \colon B(M_1) \to B(N)$ is not only surjective, but \textit{locally surjective}, a term defined in \cite{FHampe}. It means that for any $p \in |B(M_1)|$, the restriction of $\pi_1$ on some neighborhood of $p$ is surjective onto some neighborhood of $\pi_1(p) \in |B(N)|$. This, in its turn, is equivalent to the projection $\pi_1$ being surjective on $\mathrm{Star}_{B(N)}(\pi_1(p))$. By description of stars of Bergman fans (paragraph after Definition \ref{def:star}) we need to show that for any pair of flats $F'' \supset F'$ of $M_1$ the restriction map 
$$\mathcal{F}(M_1|_{F''}/F') \to \mathcal{F}(M_1|_{T \cap F''}/(T \cap F')), \; F \mapsto F \cap T$$
is surjective. We show that for the flat $F_N$ of $N$ containing $F' \cap T$ the preimage is, for example, $\cl_{M_1}(F' \cup F_N)$. Indeed, using modularity of $T$,
$$\rk_{M_1}(F' \vee F_N) = \rk_{M_1}{F'} + \rk_{M_1}{F_N} - \rk_{M_1}{F' \cap T},$$
and then
\begin{equation*}
\begin{split}
\rk_{M_1}((F' \vee F_N) \cap T) = \rk_{M_1}(F' \vee F_N) + \rk_{M_1}{T} - \rk_{M_1}{F' \vee T} = \rk_{M_1}{F'} + \\ + \rk_{M_1}{F_N} - \rk_{M_1}{F' \cap T} + \rk_{M_1}{T} - \rk_{M_1}{F'} - \rk_{M_1}{T} + \rk_{M_1}{F' \cap T} = \rk_{M_1}{F_N},
\end{split}
\end{equation*}
so $(F' \vee F_N) \cap T = F_N$ as claimed.
In Theorem 3.9 of \cite{FHampe} it is shown that when one of the projections is locally surjective, the tropical fibre product is a tropical fan with positive weights. Moreover, its support is equal to the set-theoretic fibre product of $B(M_1)$ and $B(M_2)$ over $B(N)$.
\begin{figure}[h]
\includegraphics[width=\textwidth]{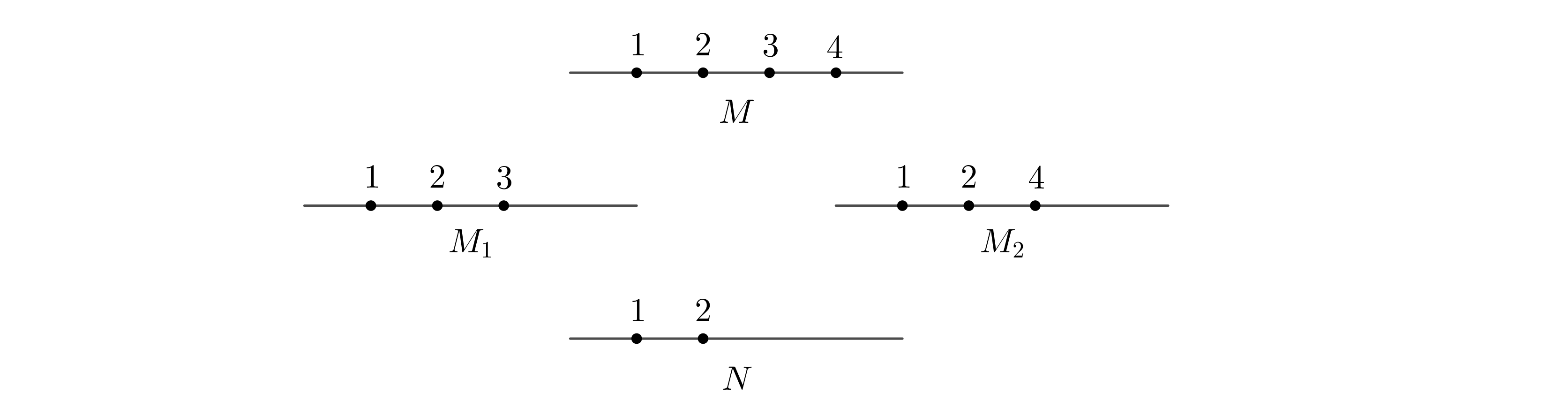}
\caption{$B(M)$ is the tropical fibre product $B(M_1) \times_{\pi_1, B(N), \pi_2} B(M_2)$.}
\label{fig:1234}
\end{figure}
\item Consider $M_1, M_2, N$ as shown on Figure \ref{fig:1234}. In this case one verifies that the tropical fibre product coincides with $B(M)$ (the elements $1$ and $2$ of $M$ should be thought of as pairs of respective parallel elements of $M_1$ and $M_2$ --- more precisely in Theorem \ref{th:tfp}). Observe that $M$ is the proper amalgam of $M_1$ and $M_2$. Mind that the set-theoretic product $B(M_1) \times_{\pi_1, B(N), \pi_2} B(M_2)$ is not even a pure-dimensional fan and, therefore, certainly not tropical. It happens because, unlike part (1) of this example, the projections $B(M_1) \to B(N)$ and $B(M_2) \to B(N)$ are not locally surjective on the rays corresponding to flats $\{3\}$ and $\{4\}$ respectively. Projective fans illustrating this phenomenon are shown on Figure \ref{fig:1234-fan}.
\begin{figure}[h]
\includegraphics[scale=0.25]{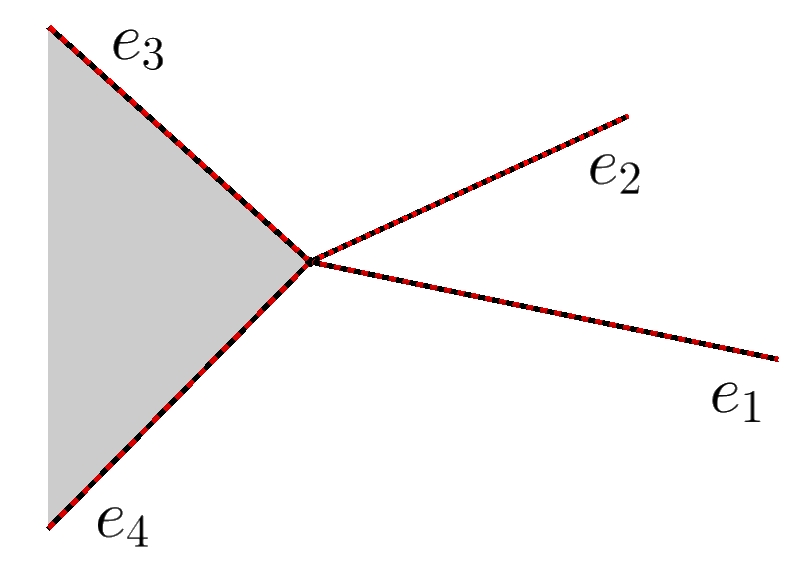}
\caption{Set-theoretic fibre product is in black and gray, $B(M)$ is in red.}
\label{fig:1234-fan}
\end{figure}
\item Consider $M_1,M_2,N$ as in Example \ref{ex:cross}. Then, it can be calculated that, after taking two out of three Weil divisors, $\pi^*(\phi_2) \cdot \pi^*(\phi_1) \cdot (B(M_1) \times B(M_2)) = B(M')$, where $M'$ is shown on Figure \ref{fig:crossspace}. Then, $\pi^*(\phi_3)(\{1_1,1_2,3_1,3_2,5,6\}) = \pi^*(\phi_3)(\{2_1,2_2,4_1,4_2,5,6\}) = -1$, but $\pi^*(\phi_3)(\{5,6\})=0$, despite $\{1_1,1_2,3_1,3_2,5,6\}$ and $\{2_1,2_2,4_1,4_2,5,6\}$ being a modular pair. Thus, $\pi^*(\phi_3)$ is not defined by the modular cut of $M'$ (see Lemma \ref{modcuts}), and the tropical fibre product has two cones with weight $-1$, corresponding to the flags $\varnothing \subset \{5\} \subset \{5,6\} \subset E_1 \sqcup E_2$ and $\varnothing \subset \{6\} \subset \{5,6\} \subset E_1 \sqcup E_2$.
\begin{figure}[h]
\includegraphics[width=\textwidth]{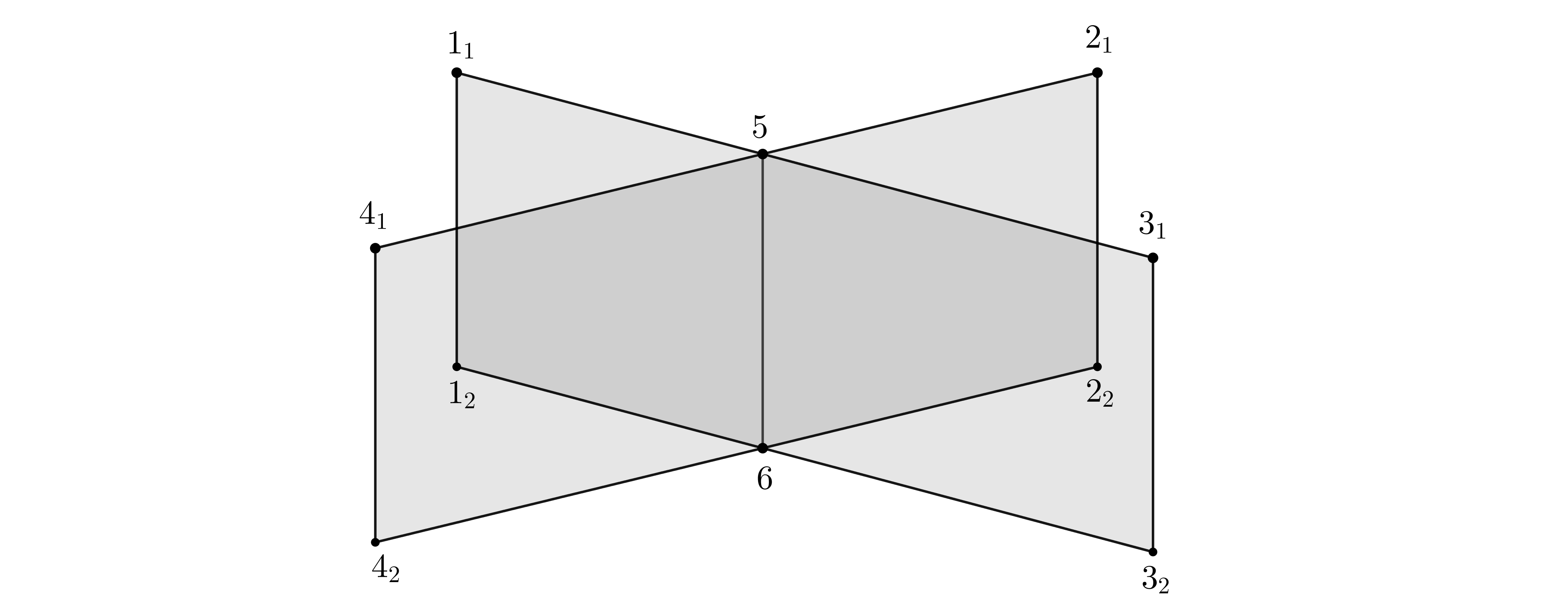}
\caption{Indices discern between elements of $E_1$ and $E_2$ in $E_1 \sqcup E_2$.}
\label{fig:crossspace}
\end{figure}
\end{enumerate}
\end{example}
\begin{lemma} \label{trres=restr}
Restriction commutes with truncation (preserving dimension). More precisely, let $M$ be a matroid on groundset $E$ with $\rk{M} = r$. let $T \subset E$ and let $M|_T=N$ with $\rk N = s$. Let $\pi \colon B(M) \to B(N)$ be a projection induced by the embedding $T \hookrightarrow E$. Let $\alpha_M$ be a truncation rational function in $\rr^E$ defined in \ref{def:truncation}, and let $Z_M = \alpha_M^{r-i} \cdot B(M)$ be an $i$-dimensional truncation of $B(M)$. Similarly, let $\alpha_N$ be a truncation function in $\rr^T$, and let $Z_N = \alpha_N^{s-i} \cdot B(N)$ be an $i$-dimensional truncation of $B(N)$. Then, $\pi_*(Z_M)=Z_N$.
\end{lemma}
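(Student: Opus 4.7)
The plan is to identify both $Z_M$ and $Z_N$ with Bergman fans of truncated matroids and then match cones of $\pi_*(Z_M)$ with those of $Z_N$ one by one. By iterating Definition~\ref{def:truncation}, one has $Z_M = B(\mathrm{Tr}^{r-i}(M))$ and $Z_N = B(\mathrm{Tr}^{s-i}(N))$; in particular every maximal cone of either fan has weight $1$, so it suffices to set up a weight-preserving bijection between the maximal cones of $\pi_*(Z_M)$ and those of $Z_N$.

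First I would enumerate maximal cones: the positive maximal cones of $Z_M$ are $\sigma_{\mathbf{F}} = \langle e_{F_1}, \ldots, e_{F_{i-1}}, e_E \rangle$ for flags $\mathbf{F} \colon \varnothing \lessdot F_1 \lessdot \cdots \lessdot F_{i-1}$ of flats of $M$ with $\rk_M F_j = j$, and similarly the positive maximal cones of $Z_N$ are $\sigma_{\mathbf{G}}$ for flags $\mathbf{G}$ in $N$ (negative cones behave identically). The map $\pi$ sends $e_{F_j} \mapsto e_{F_j \cap T}$ and $e_E \mapsto e_T$, so $\pi$ is injective on $\sigma_{\mathbf{F}}$ precisely when $F_1 \cap T \subsetneq \cdots \subsetneq F_{i-1} \cap T \subsetneq T$ is a strict chain; in that case the $F_j \cap T$ are automatically flats of $N$ of ranks $1, \ldots, i-1$ (using Lemma~\ref{lemma:rankunion} to control the rank jumps), so $\pi(\sigma_{\mathbf{F}})$ is a maximal cone of $Z_N$.

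Next, for every flag $\mathbf{G}$ of $Z_N$ I would prove that there is a unique flag $\mathbf{F}$ in $Z_M$ with $F_j \cap T = G_j$ for all $j$, given by $F_j = \cl_M(G_j)$. Uniqueness uses that $\rk_M G_j = \rk_N G_j = j$, since rank is preserved under restriction to $T$: any flat of $M$ of rank $j$ containing $G_j$ must coincide with $\cl_M(G_j)$. Existence and the covering property are immediate from the fact that the closure of a strictly increasing chain of flats of $N$ is a strictly increasing chain of flats of $M$ with rank jumps of exactly $1$, and that in $\mathrm{Tr}^{r-i}(M)$ every rank-$(i-1)$ flat is covered by $E$. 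The lattice index is then handled by the change of basis $\{e_{F_1}, e_{F_2}-e_{F_1}, \ldots, e_E - e_{F_{i-1}}\}$, which consists of $0/1$ indicators of a disjoint partition of $E$ and hence spans $V_{\sigma_{\mathbf{F}}} \cap \zz^E$ integrally; $\pi$ sends it onto the analogous basis of $V_{\sigma_{\mathbf{G}}} \cap \zz^T$, so $\lambda(\pi(\sigma_{\mathbf{F}}), \sigma_{\mathbf{F}}) = 1$.

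Combining these steps, each maximal cone of $Z_N$ has exactly one preimage in $Z_M$ contributing weight $1 \cdot 1 = 1$ to $\pi_*(Z_M)$, matching the weight of $Z_N$. I expect the main subtlety to be the uniqueness-of-preimage argument: one must carefully use the invariance of rank under restriction together with the covering relations inside the truncated matroids $\mathrm{Tr}^{r-i}(M)$ and $\mathrm{Tr}^{s-i}(N)$ to ensure that $F_j = \cl_M(G_j)$ really does produce a flag of the correct type in $Z_M$. The lattice-index and weight computations are routine once that combinatorial identification is in place.
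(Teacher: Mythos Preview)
Your proposal is correct and follows essentially the same route as the paper: identify $Z_M$ and $Z_N$ with Bergman fans of iterated truncations, then set up a bijection between the maximal cones of $Z_N$ and those maximal cones of $Z_M$ on which $\pi$ is injective via $F_j = \cl_M(G_j)$, noting that all lattice indices are $1$. Your write-up is considerably more detailed than the paper's (which dispatches the argument in a few sentences), but the underlying idea is identical.
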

\begin{proof}
Since $Z_M = B(\mathop{\mathrm{Tr}}^{r-i}(M))$, maximal cones of $Z_M$ are generated by characteristic rays of flags of the form $\varnothing \lessdot F_1 \lessdot \ldots \lessdot F_{i-1} \leqslant F_i = E$. The image of such a cone under $\pi$ is $i$-dimensional if and only if all $F_i \cap T$ are different flats of $B(\mathop{\mathrm{Tr}}^{s-i}(N))$. Any such flag can be restored from the flag $\varnothing \lessdot F'_1 \lessdot \ldots \lessdot F'_{i-1} \leqslant F'_i = T$ of flats in $N$ by defining $F_i = \cl_M{F'_i}$. Thus, there is a bijection between the maximal cones of $Z_N$ and maximal cones of $Z_M$ with zero intersection with $\ker{\pi}$. All the indices of the respective lattices are equal to $1$, as always, as each lattice is generated by the primitive vectors of the generating rays.
\end{proof}
Finally, we will need a few more statements providing tying tropical intersections in related objects --- the domain and the range of the tropical morphism; or the tropical fan and its star.
\begin{theorem}[Projection formula, \cite{ARau}, Proposition 4.8] \label{th:projformula}
Let $f \colon X \to Y$ be a morphism of tropical fans. Let $C$ be a subcycle of $X$, and $\phi$ a rational function on $Y$. Then,
$$\phi \cdot f_*(C) = f_*(f^*(\phi) \cdot C).$$
\end{theorem}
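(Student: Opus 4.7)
The plan is to verify the identity weight-by-weight on codimension-one cones of $Y$. Both $\phi \cdot f_*(C)$ and $f_*(f^*(\phi) \cdot C)$ are tropical subcycles of $Y$ of dimension $k-1$, where $k = \dim C$, so it suffices to fix a $(k-1)$-dimensional cone $\tau' \subset Y$ and prove that $\omega_{\phi \cdot f_*(C)}(\tau') = \omega_{f_*(f^*(\phi) \cdot C)}(\tau')$.

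For the left-hand side, I would expand $\omega_{\phi \cdot f_*(C)}(\tau')$ via Definition \ref{def:Weil} and then substitute the push-forward formula of Definition \ref{def:pushforward} for each weight $\omega_{f_*C}(\sigma')$. The result is a sum indexed by pairs $(\sigma, \sigma')$ with $f(\sigma) = \sigma' \supsetneq \tau'$, carrying weights $\omega_C(\sigma)\lambda(\sigma',\sigma)$ against the quantities $\phi(v_{\sigma'/\tau'})$ inside and outside a nonlinear $\phi$-term.

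For the right-hand side, I would first compute $\omega_{f^*(\phi)\cdot C}(\tau)$ on each $(k-1)$-cone $\tau$ of $C$ with $f(\tau) = \tau'$ using Definition \ref{def:Weil}, then assemble the push-forward with the weights $\lambda(\tau',\tau)$. The internal sum over maximal cones $\sigma \supset \tau$ naturally splits into two kinds: the \emph{non-collapsing} ones, where $f(\sigma) \supsetneq \tau'$, and the \emph{collapsing} ones, where $f(\sigma) = \tau'$. For the non-collapsing terms, the identity $f^*(\phi)(v_{\sigma/\tau}) = \phi(f(v_{\sigma/\tau}))$ together with the compatibility of lattice indices $\lambda(\tau',\tau)\cdot\bigl[f(v_{\sigma/\tau})\!:\!v_{\sigma'/\tau'}\bigr] = \lambda(\sigma',\sigma)$ gives a term-by-term match with the left-hand side.

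The main obstacle is to show that the contributions from the collapsing cones cancel. For collapsing $\sigma$, the vector $f(v_{\sigma/\tau})$ lies in $V_{\tau'}$, where $\phi$ is linear; so the sum $\sum_{\sigma \text{ collapsing}} \omega_C(\sigma) f^*(\phi)(v_{\sigma/\tau})$ equals $\phi$ applied to the partial balancing sum $\sum_{\sigma \text{ collapsing}} \omega_C(\sigma) f(v_{\sigma/\tau})$. Using the balancing condition of $C$ at $\tau$, this partial sum is exactly the negative (modulo $V_\tau$, and hence modulo $V_{\tau'}$ after applying $f$) of the non-collapsing partial sum. This cancels precisely the discrepancy between $\sum \phi(v_{\sigma'/\tau'})$ and $\phi(\sum v_{\sigma'/\tau'})$ coming from the non-collapsing contributions on the right, leaving exactly the expression computed on the left. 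The delicate bookkeeping is the lattice-index compatibility and the fact that collapsing cones produce no independent contribution because $\phi$ is linear along $V_{\tau'}$.
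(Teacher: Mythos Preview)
The paper does not prove this theorem: it is stated with a citation to \cite{ARau}, Proposition~4.8, and no proof is given. So there is nothing to compare your argument against in the paper itself.

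Your sketch follows the standard direct verification that appears in \cite{ARau}. The structure is sound: fix a codimension-one cone $\tau'$ downstairs, expand both sides using Definitions~\ref{def:Weil} and~\ref{def:pushforward}, split the upstairs maximal cones over each $\tau$ with $f(\tau)=\tau'$ into collapsing and non-collapsing, match the non-collapsing ones term-by-term via index multiplicativity, and kill the collapsing contribution using balancing plus the linearity of $\phi$ on the cone $\tau'$. Two points deserve more care than you give them. First, you implicitly assume a fan structure on $C$ such that every cone maps onto a cone of $Y$; in general one must pass to a refinement of $C$ (and check that the divisor and push-forward constructions are insensitive to refinement). Second, your cancellation of the collapsing part relies on $\phi$ being linear on $V_{\tau'}$, but $\phi$ is only guaranteed to be linear on the cone $\tau'$; you should phrase the cancellation so that all vectors to which you apply $\phi$ stay inside a single cone (this is what the representative-independence argument for Definition~\ref{def:Weil} actually uses). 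With those two caveats addressed, the argument goes through.
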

If $\phi$ is a rational function on $X$, then the rational function $\phi^{p}$ on the star $\mathrm{Star}_X(p)$ is defined as the linear extension of the restriction of $\phi$ onto the small neighborhood of $p$.
\begin{lemma} \label{lemma:stardivisor}
Taking the star commutes with cutting out Weil divisor. More precisely, if $X$ is a tropical fan, $p$ a point and $\phi$ a rational function, then
$$\mathrm{Star}_{\phi \cdot X}(\rho) = \phi^{\rho} \cdot \mathrm{Star}_X(\rho).$$
\end{lemma}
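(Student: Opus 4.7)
The plan is to reduce the claim to a cone-by-cone weight comparison that makes the Weil divisor formula literally the same on both sides. If $\rho \notin |X|$ both sides are empty by Definition~\ref{def:star}, so I may assume $\rho \in |X|$, and let $\tau$ be the minimal cone of $X$ whose relative interior contains $\rho$. A standard description of the star (Definition~\ref{def:star}) yields weight-preserving bijections between maximal cones of $\mathrm{Star}_X(\rho)$ and maximal cones $\sigma \supset \tau$ of $X$, and between codimension-$1$ cones $\tilde\tau'$ of $\mathrm{Star}_X(\rho)$ and codimension-$1$ cones $\tau' \supset \tau$ of $X$. Since $\phi \cdot X$ is supported in the codimension-$1$ skeleton of $X$, the support of $\mathrm{Star}_{\phi \cdot X}(\rho)$ sits inside the union of translates of such $\tau'$, and so does the support of $\phi^\rho \cdot \mathrm{Star}_X(\rho)$; matching the two fans therefore reduces to matching weights on each $\tilde\tau'$.

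I would then compute both weights from Definition~\ref{def:Weil}. The left-hand-side weight at $\tilde\tau'$ is just $\omega_{\phi \cdot X}(\tau')$, namely
$$\sum_{\sigma \supset \tau'} \omega_X(\sigma)\, \phi(v_{\sigma/\tau'}) - \phi\Bigl(\sum_{\sigma \supset \tau'} \omega_X(\sigma)\, v_{\sigma/\tau'}\Bigr),$$
while the right-hand-side weight is the analogous expression evaluated in $\mathrm{Star}_X(\rho)$ with $\phi^\rho$ in place of $\phi$ and $v_{\tilde\sigma/\tilde\tau'}$ in place of $v_{\sigma/\tau'}$. The bijection of maximal cones preserves weights; and since $V_{\tilde\sigma} = V_\sigma$ and $V_{\tilde\tau'} = V_{\tau'}$, the same integer representatives $v_{\sigma/\tau'}$ may be used for $v_{\tilde\sigma/\tilde\tau'}$. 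Finally, $\phi$ is linear on each $\sigma$, and $\phi^\rho$ is by definition the linear extension of the restriction of $\phi$ to a small neighborhood of $\rho$, so $\phi^\rho(v_{\sigma/\tau'}) = \phi(v_{\sigma/\tau'})$; by balancing, the argument of the outer $\phi$ lies in $V_{\tau'}$, on which $\phi$ and $\phi^\rho$ again agree. Substituting these equalities into the right-hand-side formula reproduces the left-hand-side weight, and the two fans coincide.

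The main point to handle carefully is the consistent choice of lattice representatives $v_{\sigma/\tau'}$ shared between $X$ and $\mathrm{Star}_X(\rho)$, together with treating the span of $\tau$ as a lineality subspace of $\mathrm{Star}_X(\rho)$ so that the Weil divisor formula applies unchanged in the presence of lineality. These are the same standing conventions already spelled out after Definition~\ref{def:tropicalfan}, so I do not expect any real obstruction beyond bookkeeping.
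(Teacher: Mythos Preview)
Your argument is correct. The paper states Lemma~\ref{lemma:stardivisor} without proof, treating it as a standard fact about tropical Weil divisors, so there is no ``paper's own proof'' to compare against; your direct cone-by-cone verification via the weight formula of Definition~\ref{def:Weil} is exactly the expected justification, and the care you take with shared lattice representatives and the lineality space $V_\tau$ is appropriate.
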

\begin{lemma}[\cite{FRau}, Definition 8.1] \label{setdiag}
Given morphism of Bergman fans $f \colon X \to Y$, if $Z \subset Y$ is a subcycle obtained from taking Weil divisors of functions $\phi_i$, then the support of $\prod f^*(\phi_i) \cdot X$ is contained in $f^{-1}(|Z|)$.
\end{lemma}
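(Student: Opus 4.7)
The plan is to reduce the claim to a single-function statement by induction on $r$. For $r=0$ the inclusion reads $|X|\subseteq f^{-1}|Y|$, which is trivial. Assume $|W_{r-1}|\subseteq f^{-1}|Z_{r-1}|$, where $W_j\defeq f^*\phi_1\cdots f^*\phi_j\cdot X$ and $Z_j\defeq \phi_1\cdots\phi_j\cdot Y$. Then $f$ restricts to a tropical morphism $g\colon W_{r-1}\to Z_{r-1}$, and $W_r=g^*\phi_r\cdot W_{r-1}$ by linearity of the Weil-divisor operator in the ambient cycle. Thus it suffices to prove: for a morphism $g\colon A\to B$ of tropical fans and a rational function $\psi$ on $B$, $|g^*\psi\cdot A|\subseteq g^{-1}|\psi\cdot B|$.

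For a point $p\in A$ with image $q\defeq g(p)$, membership in the support of a Weil divisor is detected by the star at that point: $p\in |g^*\psi\cdot A|$ if and only if $\mathrm{Star}_{g^*\psi\cdot A}(p)$ is nonzero. By Lemma \ref{lemma:stardivisor} this star equals $(g^*\psi)^p\cdot\mathrm{Star}_A(p)$. Since $g$ is induced by a $\zz$-linear map, it restricts to a tropical morphism $\tilde g\colon \mathrm{Star}_A(p)\to\mathrm{Star}_B(q)$, and from the conewise-linear definition of $(\cdot)^p$ one reads off the compatibility $(g^*\psi)^p=\tilde g^*(\psi^q)$. I would then reduce the assertion to the vanishing statement: if $\psi^q\cdot\mathrm{Star}_B(q)=0$, then $\tilde g^*(\psi^q)\cdot\mathrm{Star}_A(p)=0$.

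This last vanishing is the main technical step, and I would prove it by direct computation using the weight formula of Definition \ref{def:Weil} at each ridge $\tau$ of $\mathrm{Star}_A(p)$. Balancing of $A$ forces $\sum_{\sigma\supset\tau}\omega_A(\sigma)v_{\sigma/\tau}\in V_\tau$, so its image under $\tilde g$ lies inside a single cone of $\mathrm{Star}_B(q)$ where $\psi^q$ is linear. Regrouping the summands according to the cone of $\mathrm{Star}_B(q)$ containing $\tilde g(v_{\sigma/\tau})$ lets me rewrite the weight of $\tilde g^*(\psi^q)\cdot\mathrm{Star}_A(p)$ at $\tau$ as a combination of ridge-balancing expressions of $\psi^q\cdot\mathrm{Star}_B(q)$ at ridges lying above $\tilde g(\tau)$, each of which vanishes by hypothesis.

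The main obstacle will be the bookkeeping in this regrouping, where one has to carefully track the push-forward multiplicities $\lambda(\sigma',\sigma)$ of Definition \ref{def:pushforward} and handle cones of $A$ that collapse under $\tilde g$ (on which $\psi^q\circ\tilde g$ is automatically linear so their contribution cancels). In our Bergman-fan setting this is eased by the observation in the paragraph after Definition \ref{def:tropicalfan}: all lattice indices are $1$ and generators of cones map to generators, which should make the combinatorics tractable.
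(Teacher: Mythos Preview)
Your inductive reduction has a genuine gap. After the first step, $W_{r-1}$ and $Z_{r-1}$ are no longer Bergman fans, so the single-function claim you isolate --- that $|g^*\psi\cdot A|\subseteq g^{-1}|\psi\cdot B|$ for an arbitrary morphism $g\colon A\to B$ of tropical fans --- is simply false. Take $B\subset\rr^2$ to be the one-dimensional ``cross'' with rays along $\pm e_1,\pm e_2$, all of weight $1$, and set $\psi(\pm e_1)=1$, $\psi(\pm e_2)=-1$; then the weight of $\psi\cdot B$ at the origin is $1-1+1-1-\psi(0)=0$, so $\psi\cdot B=0$. Let $A=\rr$ with $g(t)=(t,0)$. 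Then $g^*\psi(t)=|t|$, so $g^*\psi\cdot A$ is the origin with weight $2$, and $|g^*\psi\cdot A|=\{0\}\not\subseteq g^{-1}(\varnothing)=\varnothing$. Your regrouping sketch cannot recover here: the sole ridge-balancing expression for $\psi\cdot B$ at the origin involves all four rays of $B$, while $g$ hits only two of them, so the weight $2$ is not a combination of expressions that vanish by hypothesis. The phenomenon you implicitly need --- that a zero divisor forces the function to be (locally) linear --- is special to Bergman fans and is lost the moment you pass to $Z_1=\phi_1\cdot Y$.

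The paper does not attempt a direct argument. In the Remark following the lemma it invokes the intersection theory of \cite{FRau}: there the pullback $f^*Z$ is \emph{defined} (Definition~8.1) as $(\pi_X)_*\bigl(\Gamma_f\cdot(X\times Z)\bigr)$ computed inside the Bergman fan $X\times Y$, and for cycles cut by rational functions this agrees with $\prod f^*(\phi_i)\cdot X$ (Example~8.2 of \cite{FRau}). The support inclusion then follows from the support property of the intersection product in Bergman fans, $|\Gamma_f\cdot(X\times Z)|\subseteq|\Gamma_f|\cap(|X|\times|Z|)$ (Theorem~4.5 of \cite{FRau}), whose projection to the first factor is exactly $f^{-1}|Z|$. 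The key point is that this computation never leaves the ambient Bergman fan $X\times Y$, where the support containment is a genuine theorem; your induction abandons that ambient structure after one step, and that is where it fails.
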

\begin{remark}
Let us explain Lemma \ref{setdiag} via a more general setting of intersection theory in Bergman fans. In \cite{FRau}, Section 4, the intersection $C \cdot D$ of arbitrary subcycles of Bergman fans is defined using diagonal construction from Definition \ref{diagconstr} below. It is shown there in Theorem 4.5 that the support of $C \cdot D$ is contained in the intersection of the supports of $C$ and $D$. Then, in Definition 8.1, a pullback of the arbitrary cycle $D \subset B(N)$ with respect to the morphism $f \colon B(M) \to B(N)$ is defined as the intersection in $B(M) \times B(N)$ of the cycle $\Gamma_f$ --- the graph of $f$ defined as the push-forward; and the cycle $B(M) \times D$; this intersection then push-forwarded onto the first factor $X$ of $X \times Y$. This construction of the pullback coincides with the one we use for those cycles $D \subset B(N)$ that are cut out by rational functions (see Example 8.2 of \cite{FRau}). Hence, the comment after Definition 8.1 of \cite{FRau} claims that the support of $f^*D$ is contained in the preimage of the support of $D$.
\end{remark}
\section{Flag fans} \label{section:flagfans}
In this section we introduce the generalization of Bergman fan which we call a Flag fan. This class of tropical fans is stable under taking Weil divisors with respect to certain rational functions which we call (simply, not modular) cut functions.
\par
We begin by recalling the standard setting of modular cuts and tropical modifications. The following lemma is a summary of results established at approximately the same time by Shaw, Francois and Rau in \cite{Kris} and \cite{FRau}.
\begin{lemma} \label{modcuts}
Let $M$ be a matroid, $B(M)$ its Bergman fan, $\mathcal{M} \subset \mathcal{F}(M)$ a modular cut. Define rational function 
$$\phi_{\mathcal{M}}(e_F) = \begin{cases}
-1, & \mbox{if $F \in \mathcal{M}$}, \\
0, & \mbox{otherwise.}
\end{cases}$$
Assume $\mathcal{M}$ does not contain flats of rank 1. Then $\phi_{\mathcal{M}} \cdot B(M) = B(N)$, where $F \in \mathcal{F}(N)$ if and only if
$$\begin{cases}
F \in \mathcal{F}(M) \\
\left[ \begin{array}{ll}
F \in \mathcal{M} \\
F' \notin \mathcal{M} \mbox{ for any } F' \gtrdot F 
\end{array} \right.
\end{cases}$$
Moreover, the balanced graph of $\phi_{\mathcal{M}} \colon B(M) \to \rr$, denoted by $\Gamma_{\phi} \in \rr^E \times \rr$, which is the union of the set-theoretic graph and cones containing generator along the last coordinate (see Definition \ref{def:Weil}), is equal to the Bergman fan $B(M')$, where $M'$ is a one-element extension of $M$ on the groundset $E \cup \{e\}$, and $F \cup \{e\} \in \mathcal{F}(M')$ if and only if $F \in \mathcal{M}$. Matroid $N$ is, in its turn, the contraction of the element $e$ of $M'$ (see Definition \ref{def:matroid}).
\par In the other direction, if $|B(N)| \subset |B(M)|$ is a subfan of codimension $1$, then there exists a modular cut $\mathcal{M}$ such that $\phi_{\mathcal{M}} \cdot B(M) = B(N)$. Function $\phi_{\mathcal{M}}$ can be defined on the rays of $B(M)$ as
$$\phi_{\mathcal{M}}(e_F) = \rk_{N}{F} - \rk_{M}{F}.$$
More generally, if $|B(N)| \subset |B(M)|$ is a subfan of codimension $s$, then there exists a sequence of functions $\phi_i = \phi_{\mathcal{M}_i}$ corresponding to the modular cuts $\mathcal{M}_i$ such that $\prod_{i=1}^{s}{\phi_{i}} \cdot B(M) = B(N)$. Mind that $\mathcal{M}_j$ is a modular cut of $\prod_{i=1}^{j-1}{\phi_{i}} \cdot B(M)$, not necessarily of $B(M)$. Functions $\phi_i$ can be defined on the rays of $B(M)$ as
\begin{equation} \label{eq:cutfunctions}
\phi_i(e_F) = \begin{cases}
0, & \mbox{if $\rk_{N}(F)+s-i \geqslant \rk_{M}(F)$} \\
-1, & \mbox{otherwise.}
\end{cases}
\end{equation}
In other words,
$$\sum_{i=1}^s \phi_{i}(F) = \rk_{N}{F} - \rk_{M}{F}, \; \mbox{ and } \; -1 \leqslant \phi_{i}(F) \leqslant \phi_{j}(F) \leqslant 0 \; \mbox{ for } \; i>j.$$
\end{lemma}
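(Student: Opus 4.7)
The overall strategy is to exploit the classical Crapo bijection between modular cuts of $M$ and single-element extensions of $M$: the modular cut $\mathcal{M}$ corresponds to an extension $M'$ on $E \cup \{e\}$ in which $F \cup \{e\}$ is a flat of $M'$ precisely when $F \in \mathcal{M}$. On the tropical side, two ingredients suffice: the interpretation of the Weil divisor $\phi \cdot X$ as the zero-section of the balanced graph $\Gamma_\phi$ (built into Definition \ref{def:Weil}), and the interpretation of the projection $\rr^{E \cup \{e\}} \to \rr^E$ as matroid contraction of $e$.

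First I would verify that $\Gamma_{\phi_{\mathcal{M}}} \subset \rr^E \times \rr$ equals $B(M')$ under a natural lattice isomorphism which sends the ray $(e_F, 0)$ of $\Gamma_{\phi_{\mathcal{M}}}$ for $F \notin \mathcal{M}$ to the ray $e_F$ of $B(M')$, the ray generated by $(e_F, -1)$ for $F \in \mathcal{M}$ to $e_{F \cup \{e\}}$, and the vertical ray forced by balancing to $e_{\{e\}}$. Matching maximal cones, which are indexed by complete flags of flats on both sides, gives the equality $\Gamma_{\phi_{\mathcal{M}}} = B(M')$; as a bonus, the balancing on $\Gamma_{\phi_{\mathcal{M}}}$ follows from the modular-cut axioms through the balancing on $B(M)$, independently reproving that Crapo's construction yields a matroid. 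Cutting out $\phi_{\mathcal{M}} \cdot B(M)$ then amounts to projecting the intersection of $\Gamma_{\phi_{\mathcal{M}}}$ with $\{z = 0\}$, which is $B(M'/e)$, giving the claimed description of flats of $N$.

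For the converse in codimension one, I would define $\phi_{\mathcal{M}}(e_F) = \rk_N F - \rk_M F$. Lemma \ref{lemma:rankunion} confines this quantity to $\{-1, 0\}$ since the codimension is one. The three modular-cut axioms for $\mathcal{M} := \phi_{\mathcal{M}}^{-1}(-1)$ translate, respectively, into $E \in \mathcal{M}$ (full rank drop at the top), upward closure (rank drops propagate upward by Lemma \ref{lemma:rankunion}), and the meet condition on a modular pair (which becomes the equality case of submodularity). Applying the first half of the lemma to the resulting $\mathcal{M}$ then produces some $B(N')$; comparing flags of flats verifies $N = N'$.

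For codimension $s$, I would proceed by induction on $s$. Iterated submodularity bounds $\rk_M F - \rk_N F$ above by $s$, which legitimises the piecewise decomposition \eqref{eq:cutfunctions}. The main obstacle, and the most delicate verification, is showing inductively that $\mathcal{M}_i := \{F : \phi_i(e_F) = -1\}$ is indeed a modular cut of the intermediate fan $\prod_{j < i} \phi_j \cdot B(M)$, and that the partial sums $\sum_{j \leqslant i} \phi_j(e_F)$ track, at each stage, the correct rank difference between $B(M)$ and the intermediate matroid. Once this bookkeeping is in place, $s$ successive applications of the codimension-one case assemble the desired factorization.
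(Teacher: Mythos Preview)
The paper does not prove this lemma: it is stated explicitly as ``a summary of results established at approximately the same time by Shaw, Francois and Rau in \cite{Kris} and \cite{FRau}'' and is used as a black box throughout. So there is no proof in the paper to compare your proposal against.

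That said, your outline is a faithful reconstruction of how the cited references actually establish the result: the Crapo bijection between modular cuts and single-element extensions, the identification of the balanced graph $\Gamma_{\phi_{\mathcal{M}}}$ with $B(M')$, and the reading of $\phi_{\mathcal{M}} \cdot B(M)$ as $B(M'/e)$ are exactly the standard moves (see \cite{Kris}, Proposition~2.24, and \cite{FRau}, Corollary~3.6). One small caution: in the codimension-one converse you invoke Lemma~\ref{lemma:rankunion} to pin $\rk_N F - \rk_M F$ to $\{-1,0\}$, but the cleaner justification is that $|B(N)| \subset |B(M)|$ forces $N$ to be a rank-one quotient of $M$ (every flat of $M$ is a flat of $N$, and the corank bounds the rank drop on every flat); Lemma~\ref{lemma:rankunion} alone does not give this without that structural input. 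Otherwise your plan is sound and matches the literature.
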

\begin{remark}
As shown in \cite{ARau}, Proposition 3.7 (b), the order of taking Weil divisors with respect to functions $\phi_i$ does not matter. We are going to stick to the non-increasing value order, though, as it is easier to track the result. For example, if, say, $\phi_2$ is not a characteristic function of a modular cut of the initial matroid $M$, but of a modular cut on $\phi_1 \cdot B(M)$, then $\phi_1 \cdot \phi_2 \cdot B(M) = \phi_2 \cdot \phi_1 \cdot B(M)$ has positive weights, while $\phi_2 \cdot B(M)$ does not.
\end{remark}
\par The next definition we recall is the partial case of Lemma \ref{modcuts} and the main advancement of \cite{FRau} --- cutting out the set-theoretic diagonal allowed to define intersection of arbitrary subcycles of Bergman fans.
\begin{definition}[\cite{FRau}, Corollary 4.2] \label{diagconstr}
Given a Bergman fan $B(N)$, the functions $\phi_{N,i}=\phi_i$ cutting the diagonal subcycle $\Delta \subset B(N) \times B(N)$ are the piecewise linear functions on $B(N) \times B(N)$ such that $\phi_1 \cdots \phi_r \cdot B(N) \times B(N) = \Delta \subset B(N) \times B(N)$. They are defined by their values on rays $F = F_1 \sqcup F_2$ by
$$\phi_i(e_F) = \begin{cases}
0, \mbox{ if $\rk_{N}(F_1 \cup F_2) + r - i \geqslant \rk_{N}(F_1)+\rk_{N}(F_2)$}  \\
-1, \mbox{ otherwise.}
\end{cases}$$
\end{definition}
Let us now relax the requirements for the function by cancelling the modularity condition.
\begin{definition} 
Consider the pair $(\mathcal{G},\omega_{\mathcal{G}})$ that consists of the ranked poset $\mathcal{G}$ of subsets of $E$ ($\varnothing,E \in \mathcal{G}$, also if $X \leqslant_{\mathcal{G}} X'$, then $X \subset X'$, but not vice versa) and of the \textit{edge weight} function $w_\mathcal{G} \colon \{(F,F') \in \mathcal{G}| \; F \lessdot_{\mathcal{G}}  F\} \to \zz$.
Then the \textit{Flag fan} corresponding to the pair $(\mathcal{G},\omega_{\mathcal{G}})$ is a tropical fan in $\rr^E$ such that each cone is generated by the characteristic vectors of the set of pairwise comparable elements of $\mathcal{G}$ (which we call a flag, as for flats of matroids), and the weight on the maximal cones is given by
$$w(F_0 \lessdot  \ldots \lessdot  F_r) = \prod_{i=0}^{r-1} w_\mathcal{G}(F_i, F_{i+1}).$$
As usual, the copies of cones with $e_E$ replaced with $-e_E$ are added with the same weights.
\end{definition}
\begin{definition}
A \textit{cut} on a ranked poset $\mathcal{G}$ is a subset of flats $\mathcal{A}$ such that if $F' \supset F \in \mathcal{A}$, then $F' \in \mathcal{A}$. A \textit{cut function} $\phi_{\mathcal{A}}$ on the Flag fan of $\mathcal{G}$ is a piecewise linear continuation of $-\chi_{\mathcal{A}}$, a function equal to $-1$ on the rays $e_F$ for $F \in \mathcal{A}$ and equal to $0$ on the other rays (except, as always, $-e_E$, where the value must be linear extension of $\phi_{\mathcal{A}}(e_E)$).
\end{definition}
\begin{example} Here are a few examples of Flag fans to get used to them:
\begin{enumerate}
\item Bergman fan $B(M)$ is a Flag fan corresponding to the pair $(\mathcal{F}(M),\omega_{\mathcal{F}(M)})$, where \newline $\omega_{\mathcal{F}(M)}(F,F')=1$ for any $F \lessdot_{M} F'$.
\item Consider $\mathcal{G}$ consisting of $\varnothing, \{1,2,3\}$ and sets $\{1\}, \{1,2\}, \{1,3\}$ all covering $\varnothing$ and all covered by $\{1,2,3\}$. Note that $\{1\} \subset \{1,2\}, \{1,3\}$, but they all have rank $1$ in $\mathcal{G}$. Let all $\omega_{\mathcal{G}}(X,X')=1$ except for $\omega_{\mathcal{G}}(\{1\},\{1,2,3\})=-1$. It is easy to verify that the balancing condition is satisfied, so that we indeed get a tropical fan. Besides being the simplest non-Bergman Flag fan, it is nice to keep in mind because a very similar fan will appear naturally in Example \ref{ex:simpletrcor}.
\item A tropical fibre product from Example \ref{ex:tfps} (3) is a Flag fan. The Hasse diagram of its poset $\mathcal{G}$ is shown on Figure \ref{fig:hasse}, and all the weights are $1$ except for $\omega_{\mathcal{G}}(\{5,6\}, E_1 \sqcup E_2) = -1$.
\begin{figure}[h]
\includegraphics[width=\textwidth]{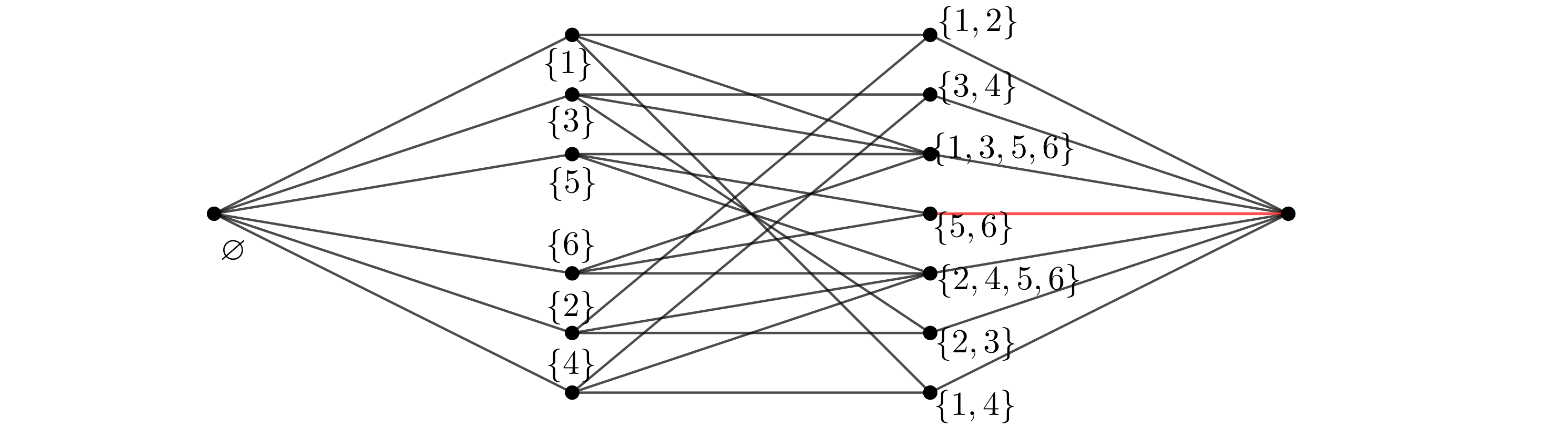}
\caption{The only covering edge with negative weight is in red.}
\label{fig:hasse}
\end{figure}
\end{enumerate}
\end{example}
\begin{lemma} \label{lemma:ff}
If $X$ is a Flag fan and $\phi = \phi_{\mathcal{A}}$ a cut function on $X$, then $Y = \phi_{\mathcal{A}} \cdot X$ is a Flag fan.
\end{lemma}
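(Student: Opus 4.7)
My plan is to apply the Weil-divisor formula of Definition~\ref{def:Weil} at every codimension-$1$ face of $X$, identify the nonzero-weight faces, and package them as the Flag fan of a derived pair $(\mathcal{G}', w'_{\mathcal{G}'})$ built out of $(\mathcal{G}, w_\mathcal{G})$ and $\mathcal{A}$.

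Every codimension-$1$ face $\tau$ of $X$ arises from a maximal flag $F_0 \lessdot_\mathcal{G} F_1 \lessdot_\mathcal{G} \cdots \lessdot_\mathcal{G} F_r = E$ by removing one intermediate generator $e_{F_i}$; the adjacent maximal cones $\sigma_{F'} \supset \tau$ are indexed by $F' \in \mathcal{G}$ with $F_{i-1} \lessdot_\mathcal{G} F' \lessdot_\mathcal{G} F_{i+1}$, with primitive normal vector $e_{F'}$ and weight $\omega(\sigma_{F'}) = C \cdot w_\mathcal{G}(F_{i-1}, F')\, w_\mathcal{G}(F', F_{i+1})$, where $C := \prod_{j \notin\{i-1, i\}} w_\mathcal{G}(F_{j-1}, F_j)$. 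Balancing of $X$ at $\tau$ puts $v := \sum_{F'} \omega(\sigma_{F'}) e_{F'}$ in $V_\tau$; a coordinate-by-coordinate check using $F_{i-1} \subset F' \subset F_{i+1}$ shows that $v = (S-\alpha) e_{F_{i-1}} + \alpha e_{F_{i+1}}$, where $S = \sum_{F'} \omega(\sigma_{F'})$ and $\alpha = \sum_{F' \ni x} \omega(\sigma_{F'})$ for any $x \in F_{i+1}\setminus F_{i-1}$ (the balancing is precisely what makes the latter sum independent of $x$).

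Substituting into the divisor formula and using $\phi_\mathcal{A}(e_F) = -\chi_\mathcal{A}(F)$ gives
$$\omega_Y(\tau) = -\sum_{F' \in \mathcal{A}}\omega(\sigma_{F'}) + (S-\alpha)\,\chi_\mathcal{A}(F_{i-1}) + \alpha\,\chi_\mathcal{A}(F_{i+1}).$$
A four-way case split on membership of $F_{i\pm 1}$ in $\mathcal{A}$, exploiting upward-closedness (to rule out the pattern $F_{i-1} \in \mathcal{A}, F_{i+1} \notin \mathcal{A}$ and to force every admissible $F'$ into $\mathcal{A}$ as soon as $F_{i-1} \in \mathcal{A}$), shows $\omega_Y(\tau) = 0$ outside the ``boundary-crossing'' case $F_{i-1} \notin \mathcal{A}$, $F_{i+1} \in \mathcal{A}$, in which $\omega_Y(\tau) = \alpha - \sum_{F' \in \mathcal{A}} \omega(\sigma_{F'}) = C \cdot w'(F_{i-1}, F_{i+1})$ for an expression $w'$ depending only on the pair $(F_{i-1}, F_{i+1})$ and on $(\mathcal{G}, w_\mathcal{G}, \mathcal{A})$.

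I then define $\mathcal{G}'$ on the same underlying set as $\mathcal{G}$, with the rank of each element of $\mathcal{A}$ decremented by $1$, and with cover relations consisting of (i) the old covers $F \lessdot_\mathcal{G} F'$ whose endpoints lie on the same side of $\mathcal{A}$, weighted $w_\mathcal{G}(F, F')$, and (ii) the new distance-$2$ covers $F \lessdot_{\mathcal{G}'} F''$ coming from chains $F \lessdot_\mathcal{G} F' \lessdot_\mathcal{G} F''$ with $F \notin \mathcal{A}, F'' \in \mathcal{A}$, weighted $w'(F, F'')$. A rank-comparison then shows (i) keeps ranks consecutive while (ii) restores the rank gap created by the decrement on $\mathcal{A}$, so $(\mathcal{G}', w'_{\mathcal{G}'})$ is a well-defined ranked-poset-with-weights whose Flag fan has exactly the nonzero-weight codimension-$1$ cones of $X$ with matching weights. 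The main obstacle is the clean factorization $\omega_Y(\tau) = C \cdot w'(F_{i-1}, F_{i+1})$ in the boundary-crossing case, which hinges on the balancing of $X$ making $\alpha$ well-defined; once this is in place, assembling $(\mathcal{G}', w'_{\mathcal{G}'})$ is routine bookkeeping.
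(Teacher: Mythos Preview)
Your proof is correct and follows essentially the same approach as the paper: both compute the Weil-divisor weight at a codimension-$1$ face, factor out the product $C$ of edge weights outside the interval $[F_{i-1},F_{i+1}]$, observe the weight vanishes unless the interval straddles the cut $\mathcal{A}$, and then define the new ranked poset by decrementing ranks on $\mathcal{A}$ and inserting rank-$2$ ``skip'' edges. The only cosmetic difference is that the paper packages your explicit quantity $\tilde{\alpha} - \sum_{F'\in\mathcal{A}} w_\mathcal{G}(F_{i-1},F') w_\mathcal{G}(F',F_{i+1})$ as the degree $\deg\bigl(\phi|_{X_{F_{i-1},F_{i+1}}}\cdot X_{F_{i-1},F_{i+1}}\bigr)$ of a local rank-$2$ Flag fan, whereas you compute the balancing vector $v=(S-\alpha)e_{F_{i-1}}+\alpha e_{F_{i+1}}$ by hand and do the four-way case split directly.
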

\begin{proof}
The idea of proof is straightforward: we take Definition \ref{def:Weil} and verify the weights on the hyperfaces of maximal cones of $X$. Maximal cone $\sigma_{\mathbf{F}}$ corresponds to the maximal flag $\mathbf{F}$ of $\mathcal{G}$ consisting of $\varnothing = F_0 \lessdot  F_1 \lessdot  \ldots \lessdot  F_r = E$ with $F_i \in \mathcal{G}$, and each of its hyperfaces corresponds to $\mathbf{F}$ with one missing set. Since $\phi$ is a monotonously non-increasing function, and sets $F_i$ are pairwise comparable, there exists unique $i$ such that $\phi(F_i) = 0$ and $\phi(F_{i+1}) = -1$. Consider $\tau = \tau_{\mathbf{F}'}$ where $\mathbf{F}' = \mathbf{F} \setminus \{F_j\}$. Recall that the unique generator of the maximal cone $\sigma$ not belonging to the face $\tau$ can be chosen as $v_{\sigma / \tau}$. Denote by $\mathbf{F}_{F \to F'}$ the flag $\mathbf{F}$ with $F$ replaced by $F'$. We get
\begin{equation*}
\begin{split}
\omega_{Y} (\tau) = \sum_{\sigma \supset \tau} \omega_X (\sigma) \phi(v_{\sigma / \tau}) - \phi \left( \sum_{\sigma \supset \tau} \omega_X (\sigma) v_{\sigma / \tau} \right) = \\ = \sum_{F_{j+1}\gtrdot F'\gtrdot F_{j-1}} \omega_X(\sigma_{\mathbf{F}_{F_j \to F'}})\phi(e_{F'}) - \phi \left(\sum_{F_{j+1}\gtrdot F'\gtrdot F_{j-1}} \omega_X(\sigma_{\mathbf{F}_{F_j \to F'}}) e_{F'}\right) = \\ =
\sum_{F_{j+1}\gtrdot F'\gtrdot F_{j-1}} \prod_{k \neq j-1,j} w_\mathcal{G}(F_k, F_{k+1}) \cdot w_\mathcal{G}(F_{j-1}, F') \cdot w_\mathcal{G}(F', F_{j+1}) \cdot \phi(e_{F'}) - \\ -
\phi \left(\sum_{F_{j+1}\gtrdot F'\gtrdot F_{j-1}} \prod_{k \neq j-1,j} w_\mathcal{G}(F_k, F_{k+1}) \cdot w_\mathcal{G}(F_{j-1}, F') \cdot w_\mathcal{G}(F', F_{j+1}) \cdot e_{F'} \right) = \\ =
\prod_{k \neq j-1,j} w_\mathcal{G}(F_k, F_{k+1}) \left( \sum_{F_{j+1}\gtrdot F'\gtrdot F_{j-1}} w_\mathcal{G}(F_{j-1}, F') \cdot w_\mathcal{G}(F', F_{j+1}) \cdot \phi(e_{F'}) \right. - \\ -
\phi \left. \left(\sum_{F_{j+1}\gtrdot F'\gtrdot F_{j-1}} w_\mathcal{G}(F_{j-1}, F') \cdot w_\mathcal{G}(F', F_{j+1}) \cdot e_{F'} \right) \right) = \\ =
\prod_{k \neq j-1,j} w_\mathcal{G}(F_k, F_{k+1}) \cdot \deg{(\phi|_{X_{F_{j-1},F_{j+1}}} \cdot X_{F_{j-1},F_{j+1}})},
\end{split}
\end{equation*}
where $X_{F_{j-1},F_{j+1}}$ in $\rr^{F_{j+1} \setminus F_{j-1}}$ is a Flag fan of the segment $[F_{j-1},F_{j+1}] \subset \mathcal{G}$ with induced edge weights. Thus, the weight $\omega_Y(\tau)$ can be calculated locally --- besides weights of covers of $\sigma$, it depends only on the edge weights of $[F_{j-1},F_{j+1}]$ and values of $\phi$ on that segment of $\mathcal{G}$.
\par The first thing we notice is that if a hyperface $\tau = \tau_{\mathbf{F}'}$ contains both $F_i$ and $F_{i+1}$ --- that is, the missing level is neither $i$-th nor $(i+1)$-st (say, it is $j$-th), then $\omega_{Y}(\tau) = 0$. Indeed, in this case $\phi|_{X_{F_{j-1},F_{j+1}}}$ is constant 0 or constant $-1$, therefore linear, not just piecewise linear, so its Weil divisor is zero.
\par Note that the modified pair $(\mathcal{G}_{\phi}, \omega_{\mathcal{G}_{\phi}})$, where $\mathcal{G}_{\phi}$ has the same covering relations $\gtrdot $ with the same weights as $\mathcal{G}$, except those with different values of $\phi$, and additional covers between $F_{j-1} \lessdot _{\mathcal{G}_{\phi}} F_{j+1}$ if $\deg{(\phi|_{X_{F_{j-1},F_{j+1}}} \cdot X_{F_{j-1},F_{j+1}})} \neq 0$ with those non-zero weights, is also ranked: $\rk_{\mathcal{G}_{\phi}}(F) = \rk_{\mathcal{G}}(F) + \phi(F)$, and $Y$ is the Flag fan of this new pair. Indeed, every maximal cone of $Y$ with non-zero weight has generators with all possible ranks in $\mathcal{G}_{\phi}$ --- from $1$ to $r-1$.
\end{proof}
\begin{corollary}[Algorithmic description] \label{cor:alg}
To construct pair $(\mathcal{G}_{\phi}, \omega_{\mathcal{G}_{\phi}})$ from pair $(\mathcal{G}, \omega_{\mathcal{G}})$, perform the following steps:
\begin{enumerate}
\item Delete all covers $F \lessdot _{\mathcal{G}} F'$ if $\phi(F) \neq \phi(F')$;
\item For all segments $[F_{j-1},F_{j+1}]$ such that $\phi(F_{j-1}) \neq \phi(F_{j+1})$, add covers $F_{j-1} \lessdot _{\mathcal{G}_{\phi}} F_{j+1}$ with $\omega_{\mathcal{G}_{\phi}}(F_{j-1},F_{j+1}) = \deg{(\phi|_{X_{F_{j-1},F_{j+1}}} \cdot X_{F_{j-1},F_{j+1}})}$ if it is not 0;
\item Delete those vertices which do not lie in the resulting graph of covers in the same connected component as $\varnothing$ and $E$ (if $\varnothing$ and $E$ are in different connected components, $\phi \cdot X$ is zero).
\end{enumerate}
\end{corollary}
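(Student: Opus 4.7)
The plan is to observe that each of the three steps of the algorithm is already extracted, in essence, from the proof of Lemma \ref{lemma:ff}: the corollary is a repackaging of that proof into procedural form. I would verify the three modifications separately and then invoke the last paragraph of the lemma's proof, which identifies $Y = \phi_\mathcal{A} \cdot X$ with the Flag fan of the modified pair $(\mathcal{G}_\phi, \omega_{\mathcal{G}_\phi})$ under the new rank function $\rk_{\mathcal{G}_\phi}(F) = \rk_\mathcal{G}(F) + \phi(F)$.

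For step (1), I would argue via the new rank function. Since $\phi$ takes values in $\{0,-1\}$ and is monotonously non-increasing along $\subset$, a cover $F \lessdot_\mathcal{G} F'$ with $\phi(F) \neq \phi(F')$ forces $\phi(F) = 0$ and $\phi(F') = -1$. Then
\[
\rk_{\mathcal{G}_\phi}(F') - \rk_{\mathcal{G}_\phi}(F) = (\rk_\mathcal{G}(F') - \rk_\mathcal{G}(F)) + (\phi(F') - \phi(F)) = 1 - 1 = 0,
\]
so $F$ and $F'$ sit at the same rank in $\mathcal{G}_\phi$ and can no longer be in a covering relation; the original edge must be deleted. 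Conversely, if $\phi(F) = \phi(F')$, the rank difference is preserved and the edge weight is unchanged, so such covers remain untouched.

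For step (2), I would quote the weight formula derived inside the proof of Lemma \ref{lemma:ff}: the weight of the hyperface $\tau_{\mathbf{F}'}$ with missing level $j$ factors as
\[
\omega_Y(\tau) = \prod_{k \neq j-1,j} w_\mathcal{G}(F_k,F_{k+1}) \cdot \deg\bigl(\phi|_{X_{F_{j-1},F_{j+1}}} \cdot X_{F_{j-1},F_{j+1}}\bigr).
\]
The second factor vanishes when $\phi$ is constant on $[F_{j-1},F_{j+1}]$, i.e.\ when $\phi(F_{j-1}) = \phi(F_{j+1})$, so only the segments with $\phi(F_{j-1}) \neq \phi(F_{j+1})$ can contribute a nonzero new cover. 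For each such segment, the computation
\[
\rk_{\mathcal{G}_\phi}(F_{j+1}) - \rk_{\mathcal{G}_\phi}(F_{j-1}) = 2 + (-1 - 0) = 1
\]
shows $F_{j-1} \lessdot_{\mathcal{G}_\phi} F_{j+1}$ is a legitimate new cover, with weight equal precisely to the local degree; if that degree is zero, no edge is added.

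For step (3), after performing (1) and (2), a vertex that does not lie in any maximal flag of $\mathcal{G}_\phi$ from $\varnothing$ to $E$ appears in no maximal cone of $Y$, hence may be discarded without affecting $Y$. If $\varnothing$ and $E$ lie in different connected components of the resulting cover graph, then no maximal flag exists, every maximal cone has weight zero, and $\phi \cdot X = 0$. The only conceptual thing to watch is that (1) and (2) together produce a genuinely ranked poset — but this is exactly the content of the final paragraph of the proof of Lemma \ref{lemma:ff}, so no additional work is required. I do not anticipate any real obstacle; the main care is merely bookkeeping to match each algorithmic step with the corresponding line of the lemma's proof.
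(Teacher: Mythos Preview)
Your proposal is correct and takes essentially the same approach as the paper: the corollary has no separate proof in the paper and is meant to be read directly off the proof of Lemma~\ref{lemma:ff}, which is precisely what you do by matching each of the three algorithmic steps to the corresponding computations (the rank shift $\rk_{\mathcal{G}_\phi}(F) = \rk_{\mathcal{G}}(F) + \phi(F)$, the local degree formula for $\omega_Y(\tau)$, and the maximal-flag description of $Y$) in that proof.
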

\begin{corollary} \label{cor:tfpff}
Tropical fibre product from Definition \ref{def:tfp} is a Flag fan.
\end{corollary}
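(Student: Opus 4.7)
The plan is to apply Lemma~\ref{lemma:ff} iteratively: start with a Flag fan representation of $B(M_1) \times B(M_2)$, and successively cut by the functions $\pi^*(\phi_1), \ldots, \pi^*(\phi_r)$ from Definition~\ref{def:tfp}, verifying at each step that the function in question is a cut function on the current Flag fan.

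The starting point is immediate: as a tropical cycle, $B(M_1) \times B(M_2)$ coincides with $B(M_1 \oplus M_2)$, which is a Flag fan with vertex set $\mathcal{F}(M_1 \oplus M_2)$ (thought of as $F_1 \sqcup F_2 \subset E_1 \sqcup E_2$ for $F_j \in \mathcal{F}(M_j)$), ordered by set inclusion, with all edge weights equal to $1$.

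The crux is showing that each $\pi^*(\phi_i)$ is a cut function on the Flag fan obtained after cutting by $\pi^*(\phi_1), \ldots, \pi^*(\phi_{i-1})$. Its values are in $\{0,-1\}$ by Definition~\ref{diagconstr}. By the algorithmic description in Corollary~\ref{cor:alg}, the vertex set at every stage is a subset of $\mathcal{F}(M_1 \oplus M_2)$ (intermediate steps only delete vertices and add covering edges), so it suffices to show that the $-1$-set of $\pi^*(\phi_i)$ is upward-closed under set inclusion in $\mathcal{F}(M_1 \oplus M_2)$. Since $\pi$ sends $F_1 \sqcup F_2$ to $(F_1 \cap T) \sqcup (F_2 \cap T)$ and is monotone for set inclusion, this reduces to upward-closedness of the $-1$-set of $\phi_i$ in $\mathcal{F}(N) \times \mathcal{F}(N)$. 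By the formula in Definition~\ref{diagconstr}, this in turn is equivalent to the monotonicity, under $G_j \subset G'_j$, of the defect
$$\delta(G_1, G_2) = \rk_N G_1 + \rk_N G_2 - \rk_N(G_1 \cup G_2).$$
The monotonicity $\delta(G_1, G_2) \leqslant \delta(G'_1, G'_2)$ follows from two applications of Lemma~\ref{lemma:rankunion}, with $(A,B,C) = (G'_1, G_1, G'_2)$ and $(A,B,C) = (G'_2, G_2, G_1)$ respectively, and summing.

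With the cut function property in hand, Lemma~\ref{lemma:ff} applied inductively to the sequence $\pi^*(\phi_1), \ldots, \pi^*(\phi_r)$ on $B(M_1 \oplus M_2)$ shows that the tropical fibre product is a Flag fan. The only real obstacle is the verification in the previous paragraph; once monotonicity of the defect $\delta$ is observed, everything else is a direct combination of Corollary~\ref{cor:alg}, monotonicity of $\pi$ under set inclusion, and iterated application of Lemma~\ref{lemma:ff}.
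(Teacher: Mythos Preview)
Your proof is correct and follows essentially the same approach as the paper's: both argue that the $\pi^*(\phi_i)$ are cut functions (because $\pi$ is inclusion-preserving and the $\phi_i$ are cut functions on $B(N)\times B(N)$), and then invoke Lemma~\ref{lemma:ff} iteratively. The paper's proof is a two-sentence sketch that leaves the upward-closedness of the $-1$-set of $\phi_i$ implicit, whereas you spell it out as monotonicity of the defect $\delta(G_1,G_2)=\rk_N G_1+\rk_N G_2-\rk_N(G_1\cup G_2)$ and verify this via two applications of Lemma~\ref{lemma:rankunion}; this is a welcome clarification but not a different route.
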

\begin{proof}
Functions $\pi^*(\phi_i)$ are pullbacks of cut functions on $B(N) \times B(N)$ with respect to the map $\pi$ which preserves inclusion of the sets, therefore, they are also cut functions. Thus, the claim follows from Lemma \ref{lemma:ff} applied multiple times.
\end{proof}
\begin{lemma} \label{lemma:ffproduct}
If $X$ is a Flag fan corresponding to the pair $(\mathcal{G},\omega_{\mathcal{G}})$, and $Y$ is a Flag fan corresponding to the pair $(\mathcal{H},\omega_{\mathcal{H}})$, then $X \times Y$ is a Flag fan corresponding to the pair $(\mathcal{G} \times \mathcal{H},\omega_{\mathcal{G} \times \mathcal{H}})$, where
$$\omega_{\mathcal{G} \times \mathcal{H}}((X_1,Y),(X_2,Y)) = \omega_{\mathcal{G}}(X_1,X_2), \;
\omega_{\mathcal{G} \times \mathcal{H}}((X,Y_1),(X,Y_2)) = \omega_{\mathcal{H}}(Y_1,Y_2).$$
\end{lemma}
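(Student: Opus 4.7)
The plan is to identify the Flag fan $Z$ of $(\mathcal{G} \times \mathcal{H}, \omega_{\mathcal{G} \times \mathcal{H}})$ as a simplicial refinement of the coarse product fan on $X \times Y$ --- whose maximal cones are the products $\sigma_{\mathbf{F}_X} \times \sigma_{\mathbf{F}_Y}$ over pairs of maximal flags in $\mathcal{G}$ and $\mathcal{H}$ --- carrying compatible weights, whence $Z$ and $X \times Y$ represent the same tropical cycle in $\rr^{E_1 \sqcup E_2}$.

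First I work out the combinatorics of $\mathcal{G} \times \mathcal{H}$. Every covering relation in the product poset is either of $X$-type $(A, B) \lessdot (A', B)$ with $A \lessdot_{\mathcal{G}} A'$, inheriting the weight $\omega_{\mathcal{G}}(A, A')$, or of $Y$-type with the analogous description. A maximal chain from $(\varnothing, \varnothing)$ to $(E_1, E_2)$ therefore encodes a triple $(\mathbf{F}_X, \mathbf{F}_Y, \mathbf{S})$, where $\mathbf{F}_X, \mathbf{F}_Y$ are maximal flags in the factors of lengths $r, s$ (uniform across pairs by rankedness) and $\mathbf{S}$ is a shuffle of their covering steps. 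The product of edge weights along such a chain factors cleanly into $X$- and $Y$-contributions and evaluates to $w_{\mathcal{G}}(\mathbf{F}_X) \cdot w_{\mathcal{H}}(\mathbf{F}_Y)$ independently of $\mathbf{S}$ --- exactly the product-fan weight prescribed on $\sigma_{\mathbf{F}_X} \times \sigma_{\mathbf{F}_Y}$.

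Second I verify that for each fixed pair $(\mathbf{F}_X, \mathbf{F}_Y)$ the $\binom{r+s}{r}$ shuffle cones partition the interior of $\sigma_{\mathbf{F}_X} \times \sigma_{\mathbf{F}_Y}$. The cleanest route is to parameterise a point of $\sigma_{\mathbf{F}_X}$ by its level values $t^X_1 \geqslant \ldots \geqslant t^X_r \geqslant 0$, where $t^X_i$ is the common value of the point on $F^X_i \setminus F^X_{i-1}$; this sets up a bijection between the cone's interior and strictly decreasing $r$-tuples, and likewise for $\sigma_{\mathbf{F}_Y}$ with parameters $t^Y_j$. Writing out the characteristic-vector expansion of an interior point of the shuffle cone indexed by a lattice path $\mathbf{S}$ shows that this cone is precisely the region in which the joint ordering of the $t^X_i$ and $t^Y_j$ values follows the interleaving dictated by $\mathbf{S}$; the $\binom{r+s}{r}$ resulting order regions tile the product-cone interior, giving the classical staircase triangulation of a product of simplicial cones.

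Assembling the two observations, $Z$ is a simplicial refinement of $X \times Y$ whose weight on each maximal cone equals the product-fan weight, so $Z$ coincides with $X \times Y$ as a tropical cycle and inherits balancing from it. The main obstacle is the subdivision claim in the second step: once the shuffles are correctly matched with the level-ordering regions above, all remaining assertions --- weight matching, refinement, and balancing --- reduce to bookkeeping.
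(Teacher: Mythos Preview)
Your proposal is correct and follows essentially the same approach as the paper: identify maximal chains in $\mathcal{G}\times\mathcal{H}$ as shuffles of a pair of maximal flags, observe that the shuffle cones refine the product cone $\sigma_{\mathbf{F}_X}\times\sigma_{\mathbf{F}_Y}$, and note that the edge-weight product factors as $w_{\mathcal{G}}(\mathbf{F}_X)\cdot w_{\mathcal{H}}(\mathbf{F}_Y)$ since each covering edge of either factor is used exactly once. The only difference is that you spell out the staircase-triangulation step via the level-value parameterisation, whereas the paper simply asserts that the product cone is the union of the shuffle cones.
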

\begin{proof}
The cone corresponding to the pair of maximal flags $G_i$ in $\mathcal{G}$ and $H_i$ $\mathcal{H}$ is the union of all the cones corresponding to the flags in $\mathcal{G} \times \mathcal{H}$ of the following form:
$$(G_0,H_0) \subsetneq (G_{i_1}, H_{j_1}) \subsetneq \ldots \subsetneq (G_{i_{k}}, H_{j_k}),$$
where $k = \rk{\mathcal{G}}+\rk{\mathcal{H}}$, and $(i_s,j_s) = (i_{s-1},j_{s-1}) + (1,0)$ or $(i_s,j_s) = (i_{s-1},j_{s-1}) + (0,1)$. Thus the supports coincide, and the weight of each such maximal cone  is equal to the product of the respective weights of the maximal cones in the factors, since each covering edge of both $\mathcal{G}$ and $\mathcal{H}$ is used in this flag exactly once.
\end{proof}

\section{Tropical fibre product} \label{sect:tfp}
\begin{theorem} \label{th:tfp}
Let $M_1$ and $M_2$ be matroids on groundsets $E_1$ and $E_2$, respectively. Let $N$ be their common restriction, i.e., a matroid on groundset $T = E_1 \cap E_2$ such that $M_1|_{T} = N = M_2|_{T}$. Let $\pi_1 \colon B(M_1) \to B(N)$ and $\pi_2 \colon B(M_2) \to B(N)$ be projections induced by the inclusions $T \hookrightarrow E_1$ and $T \hookrightarrow E_2$. Let $X = B(M_1) \times_{\pi_1, B(N), \pi_2} B(M_2)$ be the tropical fibre product of $B(M_1)$ and $B(M_2)$ over $B(N)$, as in Definition \ref{def:tfp}. Then:
\begin{enumerate}
\item \label{deg1} $X$ is a tropical fan of degree 1;
\item \label{easy} If there exists a proper amalgam $M$ of $M_1$ and $M_2$ over $N$, as in Definition \ref{def:pramalg}, then $B(M) \to B(M_1) \times B(M_2)$ induced by $E_1 \sqcup E_2 \to E_1 \cup E_2$ is an isomorphism on $X$;
\item \label{hard} If all weights of $X$ are positive, then the proper amalgam $M$ of $M_1$ and $M_2$ over $N$ exists.
\end{enumerate}
\end{theorem}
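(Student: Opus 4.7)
The plan is to establish the three parts in sequence, leveraging the Flag-fan description of $X$ from Corollary \ref{cor:tfpff} throughout. For part (1), the dimension follows immediately: each of the $r=\rk N$ Weil divisors $\pi^{*}(\phi_i)$ drops dimension by one, starting from $\dim(B(M_1)\times B(M_2))=\rk M_1+\rk M_2$, so $\dim X=\rk M_1+\rk M_2-\rk N$. For the degree, I would first truncate $X$ down to dimension $\rk N$ by applying $\alpha^{k-\rk N}$, then use the projection formula to commute this past the $\pi^{*}(\phi_i)$ divisors and push forward along $\tilde\pi=\pi_1\times\pi_2$ into $B(N)\times B(N)$; applying Lemma \ref{trres=restr} factor-by-factor sends the truncated product to the corresponding truncation of $B(N)\times B(N)$, and the surviving divisors $\phi_1\cdots\phi_r$ then cut out the diagonal $\Delta$, which has degree $\deg B(N)=1$.

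For part (2), I would define $\iota\colon B(M)\to B(M_1)\times B(M_2)$ as the morphism dual to the quotient surjection $E_1\sqcup E_2\twoheadrightarrow E_1\cup E_2$, sending $e_F\mapsto e_{F\cap E_1}+e_{F\cap E_2}$. Because $M|_{E_i}=M_i$, each flat $F\in\mathcal{F}(M)$ satisfies $F\cap E_i\in\mathcal{F}(M_i)$, so the image of any flag of $M$ is a chain in $\mathcal{F}(M_1)\times\mathcal{F}(M_2)$. The image lies in $|X|$ by Lemma \ref{setdiag}, since $(F\cap E_1,F\cap E_2)$ projects to the same point on the diagonal of $B(N)\times B(N)$. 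The proper-amalgam equality $\rk_M F=\eta(F)$ matches the $\mathcal{G}_\phi$-rank of $(F\cap E_1,F\cap E_2)$ computed by Corollary \ref{cor:alg}: summing the cut-function values from Definition \ref{diagconstr} yields $\sum_i\phi_i(e_{F\cap E_1}+e_{F\cap E_2})=-\rk_N(F\cap T)$ when the $T$-intersections agree, so $\rk_{\mathcal{G}_\phi}(F\cap E_1,F\cap E_2)=\rk_{M_1}(F\cap E_1)+\rk_{M_2}(F\cap E_2)-\rk_N(F\cap T)=\eta(F)=\rk_M F$. Since all edge weights along such flags are $1$, $\iota$ becomes a weight-preserving isomorphism of $B(M)$ onto $X$.

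For part (3), positive weights together with part (1) let me invoke Theorem \ref{th:Fink}: $X=B(\tilde M)$ for some matroid $\tilde M$ on $E_1\sqcup E_2$. The two copies $t_1,t_2$ of each $t\in T$ are parallel in $\tilde M$ because $\pi_1\circ p_1=\pi_2\circ p_2$ on $|X|$ forces their rank-$1$ flats to coincide; simplifying yields a matroid $M$ on $E_1\cup E_2$. The restriction identity $M|_{E_i}=M_i$ follows by pushing $X$ forward through $p_i\colon B(M_1)\times B(M_2)\to B(M_i)$ using Lemma \ref{trres=restr} and the projection formula, exactly as in part (1). The proper-amalgam condition $\rk_M F=\eta(F)$ then comes from the same Flag-fan rank identity as in part (2), once one identifies $\mathcal{F}(M)$ with those pairs $(F_1,F_2)\in\mathcal{G}_\phi$ with $F_1\cap T=F_2\cap T$ that survive Step (3) of Corollary \ref{cor:alg}. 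I expect the main obstacle to lie precisely in this identification: verifying that connectivity of $(F_1,F_2)$ to both ends of $\mathcal{G}_\phi$ correctly carves out the flats of the amalgam, requiring both the telescoping identity above and the submodularity estimates of Lemma \ref{lemma:rankunion} to track how covers are added and removed through the iterative application of the cut functions $\pi^{*}(\phi_i)$.
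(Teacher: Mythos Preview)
Your approach to part~(1) is essentially the paper's: both truncate, apply the projection formula to pass $\pi^*(\phi_i)$ through $\pi_*$, and invoke Lemma~\ref{trres=restr} to reduce to $\deg\Delta=1$ inside $B(N)\times B(N)$.

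The real gap is in part~(2). You invoke Lemma~\ref{setdiag} in the wrong direction: it asserts $|X|\subset\pi^{-1}(|\Delta|)$, not the reverse, so observing that $\iota(B(M))$ lands in the set-theoretic fibre product does \emph{not} give $\iota(B(M))\subset|X|$. Example~\ref{ex:tfps}(2) already shows $\pi^{-1}(|\Delta|)$ can be strictly larger than $|X|$. You then assert ``all edge weights along such flags are~$1$'' with no justification; the Flag-fan weights of $X$ are produced by the iterative procedure of Corollary~\ref{cor:alg}, and nothing you have written controls them --- indeed, by part~(3), when the proper amalgam fails they are not all~$1$. Even granting both claims, you would still need the reverse containment $|X|\subset\iota(|B(M)|)$ to conclude equality of cycles. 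The paper's route is entirely different and is where the real work lives: it takes the modular-cut functions $\psi_i$ that cut $B(M')\subset B(M_1\oplus M_2)$ via Lemma~\ref{modcuts}, and proves $\psi_i=\pi^*(\phi_i)$ on every ray surviving to step~$i$. The crux is a minimal-counterexample argument: if the first discrepancy occurs at step~$i$ on a ray $e_F$, one shows (using the proper-amalgam rank identity together with Lemma~\ref{lemma:rankunion}) that some cover $F'\gtrdot F$ already lies in the cut at step~$i-1$, so $e_F$ cannot be a ray of $B(\tilde M_{i-1})$ after all. Your rank-matching observation $\rk_{\mathcal{G}_\phi}=\eta=\rk_M$ is correct and plays a role, but cannot substitute for this step.

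For part~(3), your outline is close to the paper's short argument (Fink plus $\rk_{\mathcal{G}}=\eta$), and you are right to worry about the amalgam condition $M|_{E_i}=M_i$, which the paper leaves implicit. However, your proposed verification via ``push $X$ forward along $p_i$ using the projection formula'' does not work as stated: the functions $\pi^*(\phi_j)$ are not pullbacks along $p_i$, so the projection formula does not apply, and in any case $p_{i*}(X)=0$ on dimension grounds whenever $\rk M_{3-i}>\rk N$.
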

\begin{proof}
We begin with Claim \ref{deg1}. Let $r_1 = \rk M_1$, $r_2 = \rk M_2$, $r_0 = \rk N$. By construction, $X \subset \rr^{E_1 \sqcup E_2}$ is a tropical fan of dimension $r=r_1+r_2-r_0$, therefore, to verify that it has degree 1 we need to prove that $X \cdot H = 1 \cdot \langle e_{E_1 \sqcup E_2} \rangle$, where $H = \alpha^{r-1} \cdot B(M_1 \oplus M_2)$. By the definition of tropical fibre product, we have $X = \pi^*(\phi_1) \cdots \pi^*(\phi_{r_0}) \cdot B(M_1 \oplus M_2)$. Therefore, by commutativity of cutting out Weil divisors (\cite{FRau}, Theorem 4.5 (6)), we need $\pi^*(\phi_1) \cdots \pi^*(\phi_{r_0}) \cdot H = 1 \cdot \langle e_{E_1 \sqcup E_2} \rangle$.
\par
Observe that $\pi_*(H) = \alpha^{r_0-1} \cdot B(N \oplus N)$, since truncation commutes with restriction by Lemma \ref{trres=restr}. We also know that $\phi_1, \ldots, \phi_{r_0}$ cut out the diagonal $\Delta \subset B(N \oplus N)$, which is a Bergman fan of a matroid and hence has degree 1, so $\phi_1 \cdots \phi_{r_0} \cdot \pi_*(H) = 1 \cdot \langle e_{T_1 \sqcup T_2} \rangle$ in $B(N \oplus N)$. The desired equality then follows from consecutively applying the projection formula \ref{th:projformula}. Indeed, the first time we use it, it yields $\pi_*(\pi^*(\phi_1) \cdot H) = \phi_1 \cdot \pi_*(H)$. Next, $\pi_*(\pi^*(\phi_2) \cdot \pi^*(\phi_1) \cdot H) = \phi_2 \cdot \pi_*(\pi^*(\phi_1) \cdot H) = \phi_2 \cdot \phi_1 \cdot \pi_*(H)$, and so forth, until we get $\pi_*(\pi^*(\phi_1) \cdots \pi^*(\phi_{r_0}) \cdot H) = \phi_1 \cdots \phi_{r_0} \cdot \pi_*(H) = 1 \cdot \langle e_{T_1 \sqcup T_2} \rangle$, and the claim follows from the fact that $\langle e_{T_1 \sqcup T_2} \rangle$ is the only $1$-dimensional cone an affine fan can have, and that $\pi_*$ sums weights over the preimages of the cone.
\par
We proceed to verify Claim \ref{easy}. We will prove that $X$ coincides with the Bergman fan of the matroid $M'$ on groundset $E_1 \sqcup E_2$ which is obtained from the proper amalgam $M$ on groundset $E_1 \cup E_2$ by replacing each element of $T$ with two parallel copies. It is immediate from the definition of the Bergman fan that $B(M')$ and $B(M)$ are isomorphic with isomorphism induced by $E_1 \sqcup E_2 \to E_1 \cup E_2$. Denote the copies of $T$ in $E_1 \sqcup E_2$ by $T_1$ and $T_2$.
\par
Observe that $B(M') \subset B(M_1 \oplus M_2)$ --- since each flat of $M'$ restricts to a flat of both $M_1$ and $M_2$ (as $M_1$ and $M_2$ are restrictions of $M'$) and since flags of flats of $M'$ are also flags of flats in $M_1 \oplus M_2$. Therefore, by equation \ref{eq:cutfunctions} from Lemma \ref{modcuts}, $B(M')$ is cut by functions $\psi_1, \ldots, \psi_{r_0}$ given by
$$\psi_i(e_F) = \begin{cases}
0, & \mbox{if $\rk_{M'}(F)+r_0-i \geqslant \rk_{M_1 \oplus M_2}(F)$} \\
-1, & \mbox{otherwise.}
\end{cases}$$
We are going to show that $\psi_i$ coincides with $\pi^*(\phi_i)$ on all the rays of $\psi_{i-1} \cdots \psi_1 \cdot B(M_1 \oplus M_2)$, a Bergman fan of the matroid that we denote by $B(\tilde{M}_{i-1})$. Therefore, iterated tropical modifications with respect to functions $\psi_i$, which yield $B(M')$, and with respect to functions $\pi^*(\phi_i)$, which yield $X$, must also coincide.
\par
Consider $F \subset E_1 \sqcup E_2$ such that $e_F \subset \psi_{i-1} \cdots \psi_1 \cdot B(M_1 \oplus M_2)$, and denote 
$$X_1 = F \cap E_1; \; X_2 = F \cap E_2; \; Y_1 = F \cap T_1; \; Y_2 = F \cap T_2.$$
Then, $\rk_{M_1 \oplus M_2}(F) = \rk_{M_1}(X_1) + \rk_{M_2}(X_2)$, and
$$\rk_{M'}(F) = \min{\{\eta(Y) \colon Y \supseteq F\}} \leqslant \eta(F) = \rk_{M_1}(X_1 \cup Y_2) + \rk_{M_2}(X_2 \cup Y_1) - \rk_{N}(Y_1 \cup Y_2),
$$
with equality attained if $F$ is a flat of $M'$.
Similarly, by Definition \ref{diagconstr},
$$\pi^*(\phi_i)(e_F) = \begin{cases}
0, & \mbox{if $\rk_{N}(Y_1 \cup Y_2)+r_0-i \geqslant \rk_{N}(Y_1)+\rk_{N}(Y_2)$} \\
-1, & \mbox{otherwise.}
\end{cases}$$
Therefore, to show that $\pi^*(\phi_i)(e_F) = \psi_i(e_F)$ for all $0 \leqslant i \leqslant r_0$, we need to verify that
\begin{equation} \label{eq:x1x2y1y2}
\begin{split}
\rk_{M_1}(X_1 \cup Y_2) + \rk_{M_2}(X_2 \cup Y_1) - \rk_{N}(Y_1 \cup Y_2) - \rk_{M_1}(X_1) - \rk_{M_2}(X_2) = \\
= \rk_{N}(Y_1 \cup Y_2) - \rk_{N}(Y_1)-\rk_{N}(Y_2),
\end{split}
\end{equation}
and that $\eta(F) = \rk_{M'}(F)$, since both $\pi^*(\phi_j)$ and $\psi_j$ can only decrease as $i$ grows so $\phi_i=\psi_i$ if $\sum_i{\phi_i}=\sum_i{\psi_i}$.
Neither is true for arbitrary $F$ in the flat lattice of the initial matroid $M_1 \oplus M_2$, but both are true ``while it matters''. More precisely, we will show that if one of these equalities fails to hold for $i$-th function, then $e_F$ is not a ray of $B(\tilde{M}_{i-1})$.
\par
After rearranging equation \ref{eq:x1x2y1y2} splits into two analogous rank identities:
\begin{equation} \label{eq:1212split}
\begin{split}
\rk_{M_1}(X_1 \cup Y_2) - \rk_{M_1}(X_1) = \rk_{N}(Y_1 \cup Y_2) - \rk_{N}(Y_1); \\
\rk_{M_2}(X_2 \cup Y_1) - \rk_{M_1}(X_2) = \rk_{N}(Y_1 \cup Y_2) - \rk_{N}(Y_2).
\end{split}
\end{equation}
First, observe that the inequality $\leqslant$ holds in both identities of equation \ref{eq:1212split} by Lemma \ref{lemma:rankunion} and due to the fact that $N$ is the restriction of $M_1$ and $M_2$, so the ranks of the subsets of $T$ are the same in $N$ as in $M_1$ or $M_2$. Moreover, if either one of these inequalities turns out to be strict, or if $\eta(F) < \rk_{M'}(F)$, then $F$ is not a flat of $M'$ (if it is, then $Y_1 = Y_2$, and equation \ref{eq:1212split} holds trivially).
\par Take the earliest step $i$ which creates the discrepancy between $\psi$ and $\pi^*(\phi)$. Namely, assume that for each $j<i$ and each ray $e_F$ of $B(\tilde{M}_{j-1})$ the equality $\psi_j(F)=\pi^*(\phi_j)(F)$ holds. Now, both sequences of numbers $\psi_j(F)$ and $\pi^*(\phi_j)(F)$ are monotonously non-increasing as $j$ grows, and
$$\sum_j \psi_j(F) = \rk_{M'}(F) - \rk_{M_1 \oplus M_2}(F) < \rk_{N}(Y_1 \cup Y_2) - \rk_{N}(Y_1)-\rk_{N}(Y_2) = \sum_j \pi^*(\phi_j)(F),$$
which means that $-1 = \psi_i(F) < \pi^*(\phi_i)(F)=0$. Our tactics is now to show that $F$ cannot actually be a flat of $\tilde{M}_{i-1}$, since there must exist a covering flat $F'$ of $F$ in $\tilde{M}_{i-2}$ which belongs to the modular cut which yields $\tilde{M}_{i-1}$, and, therefore, $e_F$ does not belong to the support of $B(\tilde{M}_{i-1})$ by Lemma \ref{modcuts}.
\par
Since $F$ is not a flat of $M'$, we can define $F'' = \cl_{M'}{F} \supsetneq F$. Now, both $F$ and $F''$ are flats of $\tilde{M}_{i-2}$ --- the former by assumption, the latter because it is even the flat of $M'$ which is at the end of the process deleting some of the flats. Take any $F' \gtrdot  F$ in $\mathcal{F}(\tilde{M}_{i-2})$ such that $F' < F''$. Since both $X_1$ and $X_2$ are flats of $M_1$ and $M_2$ respectively, we have $\rk_{M_1 \oplus M_2} F' > \rk_{M_1 \oplus M_2} F$. Therefore, for $F'$ the difference between the initial rank $\rk_{M_1 \oplus M_2}(F')$ and the final rank $\rk_{M'}(F')$ is strictly larger than for $F$, so $\sum_j \psi_j(F') < \sum_j \psi_j(F)$.
\par
Using monotonicity of $\psi$ again, we get $\psi_{i-1}(F')=-1$ (i.e., $\psi$ already was $-1$ on $F'$ on the previous step), but, at the same time, $\psi_{i-1}(F) = \pi^*(\phi_{i-1})(F)$ since $i$-th step is the first with the discrepancy, which in its turn, by monotonicity of $\pi^*{\phi}$, is not less than $\pi^*(\phi_{i})(F)=0$. For the sake of clarity these considerations are gathered in Figure \ref{fig:cube}. So, we have that $F$ is a flat under the $(i-1)$-st cut and cannot be a flat of $\tilde{M}_{i-1}$ by Lemma \ref{modcuts}. This completes the proof of Claim \ref{easy}.
\begin{figure}[h]
\includegraphics[width=\textwidth]{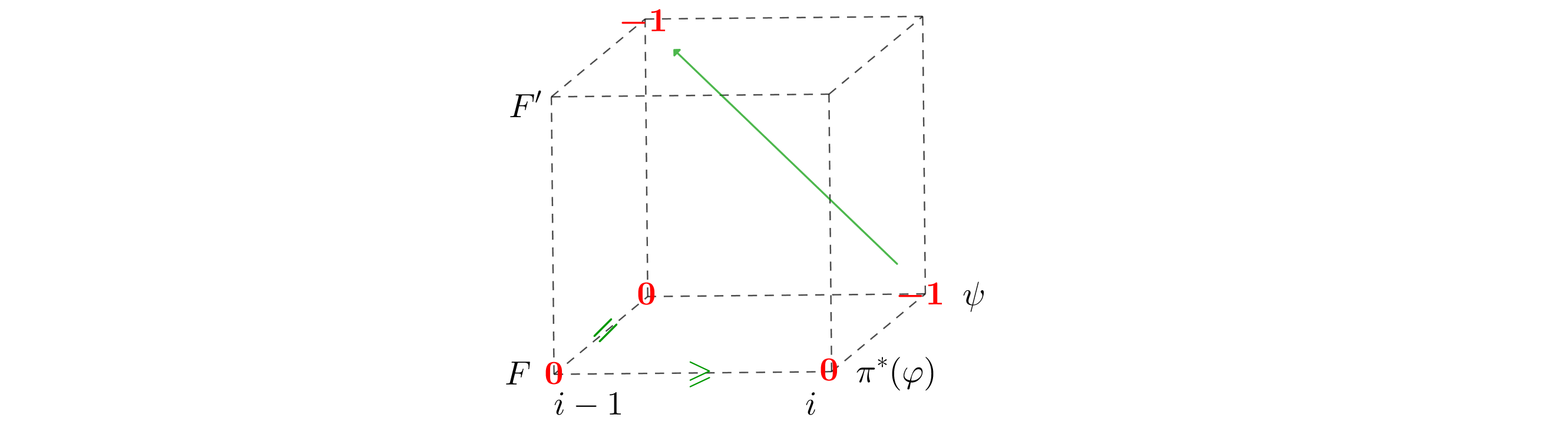}
\caption{Relations between values of $\pi^*(\phi)$ and $\psi$.}
\label{fig:cube}
\end{figure}
\par
We will now verify Claim \ref{hard}. By Claim \ref{deg1} and Theorem \ref{th:Fink} we have that $X=B(M)$ for some matroid $M$, and we want to show that $M$ is actually the proper amalgam of $M_1$ and $M_2$ along $N$ (with added parallel copies). The reason we cannot simply follow the equivalences constructed for Claim \ref{easy} in the opposite direction is that we have no analogues of Corollary 3.6 from \cite{FRau}. More precisely, we do not know what happens to the fan $\psi_{i} \cdots \psi_1 \cdot B(M_1 \oplus M_2)$ once it ceases to be a Bergman fan for the first time. Theoretically, it can emerge effective again (which actually happens, as our verification of the degree practically means that after enough truncations the fan becomes $1 \cdot \langle e_E \rangle$, with ranks of flats being not the ones prescribed by the definition of proper amalgam.
\par
By Corollary \ref{cor:tfpff}, though, $X$ is a Flag fan corresponding to some pair $(\mathcal{G},\omega_{\mathcal{G}})$. All the rays $F$ of $X$ have $Y_1=Y_2=Y$ by Lemma \ref{setdiag}, so
$$\rk_{\mathcal{G}}(F) = \rk_{M_1 \oplus M_2}(F) + \sum \pi^*(\phi_i)(F) = \rk_{M_1}(X_1) + \rk_{M_2}(X_2) - \rk_{N}(Y) = \eta(F).$$
\par Summarizing, $X = B(M)$ for some $M$ by Theorem \ref{th:Fink}, and for each flat $F$ of $M$ we have
$\rk_M(F) = \eta(F)$, which means that, by Definition \ref{def:pramalg}, $M$ is a proper amalgam of $M_1$ and $M_2$ along $N$.
\end{proof}
\section{Tropical graph correspondences} \label{sect:tropicalgraphs}
In this section we develop first steps in tropical correspondence theory. The motivation is to obtain a category of tropical fans where the tropical fibre product is actually a pullback. While this precise formulation is not achieved (and is likely not possible to achieve, see Remark \ref{remark:notpullback}), worthwhile statements are established in the process.
\subsection{Introducing notions} We begin with the following unsatisfactory observation. Consider the tropical fibre product from Example \ref{ex:tfps} (2) and a fan $B(M')$, where $M'$ is another amalgam of $M_1$ and $M_2$ with $\{3\}$ and $\{4\}$ being parallel elements. Then, neither of the supports of $B(M) = B(M_1) \times_{B(N)} B(M_2)$ and $B(M')$ is contained in the other, and it is not difficult to check (see Figure \ref{fig:nomap} for projective Bergman fans of $M$ and $M'$) that there are no tropical morphisms between them making the diagram commute:
$$\xymatrix{
B(M') \ar@/_/[ddr]_{q_1} \ar@/^/[drr]^{q_2}
\ar@{.>}[dr]|-{\nexists} \\
& B(M) \ar[d]^{p_1} \ar[r]_{p_2}
& B(M_2) \ar[d]_{\pi_2} \\
& B(M_1) \ar[r]^{\pi_1} & B(N) }$$
\begin{figure}[h]
\includegraphics[width=\textwidth]{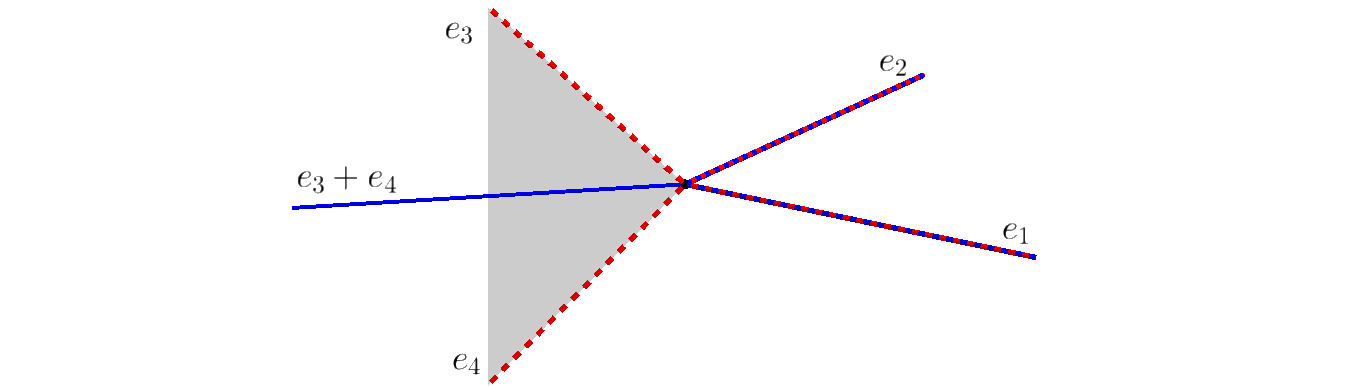}
\caption{$B(M')$ is in blue, $B(M)$ is in red.}
\label{fig:nomap}
\end{figure}
Something can be done, though:
\begin{example} \label{ex:simpletrcor}
Denote the groundset of $M'$ by $E' = \{1',2',3',4'\}$, the groundset of $M$ by $E = \{1,2,3,4\}$, and consider a Flag fan $\Gamma \subset B(M') \times B(M)$, where
$$\mathcal{G} = \{\varnothing, E \cup E', \{1',1\}, \{2',2\}, \{3',4',3\}, \{3',4',4\}, \{3',4'\}\},$$
and all edge weights are 1 except for $(\{3',4'\}, E \cup E')$ which is $-1$. It is easy to see that $\Gamma$ is actually a tropical fan and a subfan in $B(M') \times B(M)$.
\par The push-forward of $\Gamma$ onto $B(M') \times B(M_1)$ (along $\id \times p_1$) is an actual graph of tropical morphism $q_1 \colon B(M') \to B(M_1)$ (as $\{3',4',4\}, \{3',4'\}$ both map to $\{3',4'\}$ and cancel each other out, the latter having weight $-1$). The same goes for $q_2 \colon B(M') \to B(M_2)$.
\end{example}
We proceed to build on this example.
\begin{definition}
Given two simple matroids $M_1,M_2$ and their lattices of flats $\mathcal{F}_1 = \mathcal{F}(M_1), \mathcal{F}_2 = \mathcal{F}(M_2)$, the \textit{lattice morphism} is a map $f \colon \mathcal{F}_1 \to \mathcal{F}_2$ such that if $F >_{\mathcal{F}_1} F'$, then $f(F) \geqslant_{\mathcal{F}_2} f(F')$.
\par We call a lattice morphism $f$ a \textit{weak lattice map} if $f(F)\neq\varnothing$ for $F \neq\varnothing$ and $\rk_{M_1}F \geqslant \rk_{M_2}(f(F))$.
\par We call a weak lattice map \textit{covering} if whenever $F \gtrdot _{\mathcal{F}_1} F'$, either $f(F) \gtrdot _{\mathcal{F}_2} f(F')$ or $f(F)=f(F')$.
\end{definition}
Note that covering lattice maps are automatically weak provided that $f(F)\neq\varnothing$ for $F \neq\varnothing$.
\par Simple matroids with weak lattice maps form a category $\mathbf{SMatrWL}$, and covering maps form a wide subcategory $\mathbf{SMatrWL_{\gtrdot }}$. The category of simple matroids with weak maps of matroids (Definition \ref{def:strongweakmaps}) is denoted by $\mathbf{SMatrW}$. There is a forgetful functor $\mathrm{Pt} \colon \mathbf{SMatrWL} \to \mathbf{SMatrW}$: it maps simple matroids to themselves, and, given $f \colon \mathcal{F}_1 \to \mathcal{F}_2$, the map $\mathrm{Pt}(f) \colon E_1 \to E_2$ is defined via $\mathrm{Pt}(f)(x) = f(\{x\})$. The definition is correct since $\rk_{M_2} f(\{x\}) = 1$, and all rank-1 flats of $M_2$ are one-element subsets since $M_2$ is simple.
\par The map $\mathrm{Pt}(f) \colon E_1 \to E_2$ is actually a weak map: if $X \subset E_1$, then $$\rk_{M_2} \mathrm{Pt}(f)(X) = \rk_{M_2}{\left(\cl_{M_2}{\bigcup_{x \in X} f(\{x\})}\right)} = \rk_{M_2}{\bigvee_{x \in X} f(\{x\})} \leqslant \rk_{M_2}{f \left(\bigvee_{x \in X} \{x\}\right)} \leqslant \rk_{M_1} X.$$
In the examples of this section matroids shown are sometimes not simple for the sake of clarity, but what is assumed every time is their simplifications, where each flat of rank $1$ is replaced by a single element.
\par Not only functor $\mathrm{Pt}$ is not faithful, it also does not have the right inverse, as shown by the following
\begin{example} \label{ex:three}
On Figure \ref{fig:threemaps} there are three different weak lattice maps from the matroid on the left side to the matroid on the right side. The only difference between them is the image of the flat $\{3,4\}$ --- in the top map it is $\{1,3,4\}$, in the bottom map it is $\{2,3,4\}$, and in the central map it is $\{3,4\}$ again. Note that the central map is not a covering lattice map ($\{1,2,3,4\}$ covers $\{3,4\}$ in the left matroid, but not in the right one), while the top and the bottom are compositions of covering lattice maps and, therefore, covering lattice maps themselves. For each of the four top and bottom arrows there is only one lattice map corresponding to the identity map on the groundset. Thus, no matter which lattice map we consider to be the image of the central identity map on the groundset, the map will not be functorial. 
\begin{figure}[h]
\includegraphics[width=\textwidth]{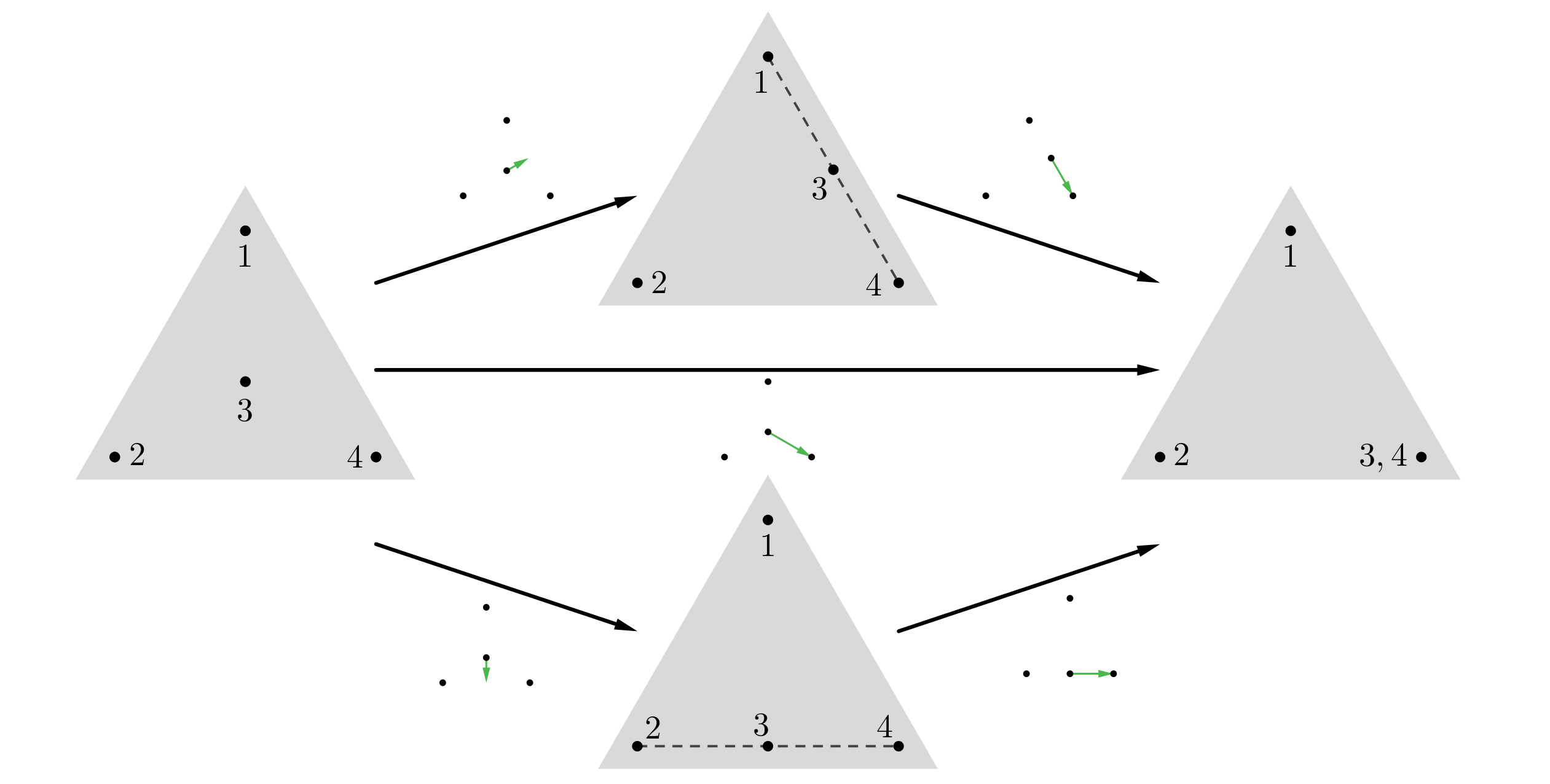}
\caption{Three weak lattice maps with the same weak map of groundset.}
\label{fig:threemaps}
\end{figure}
\end{example}
\par Next, we define tropical correspondences between Bergman fans. The whole construction is almost verbatim to the correspondences between, say, smooth projective varieties over $\cc$, with the (already noted in tropical intersection theory) exception that those are defined on classes of cycles modulo rational equivalence, while groups of tropical cycles are not quotients.
\begin{definition} \label{def:tropcor}
A \textit{tropical correspondence} $\Gamma$ between Bergman fans $B_X = B(M_1)$ and $B_Y = B(M_2)$ denoted by $\Gamma \colon B_X \vdash B_Y$ is a tropical subcycle in $B_X \times B_Y$. The \textit{identity} correspondence is the diagonal $\Delta_X \subset B_X \times B_X$. The composition of correspondences $\Gamma_{XY} \colon B_X \vdash B_Y$ and $\Gamma_{YZ} \colon B_Y \vdash B_Z$ is given by the following formula:
$$\Gamma_{YZ} \circ \Gamma_{XY} \defeq (\pi_{XZ})_*(\pi_{XY}^*(\Gamma_{XY}) \cdot \pi_{YZ}^*(\Gamma_{YZ})) \subset B_X \times B_Z,$$
where $\pi_{XY} \colon B_X \times B_Y \times B_Z \to B_X \times B_Y$ is a projection, and similarly $\pi_{XZ},\pi_{YZ}$.
\end{definition}
\begin{lemma}
Bergman fans of simple matroids with tropical correspondences form a category (that we denote with $\mathbf{SMatrCor}$):
\begin{itemize}
\item $\Delta_X \colon B_X \vdash B_X$ is a unit: for any $\Gamma \colon B_X \vdash B_Y \; \Gamma \circ \Delta_X = \Gamma$, and for any $\Gamma \colon B_Y \vdash B_X \; \Delta_X \circ \Gamma = \Gamma$;
\item Composition is associative: for any $\Gamma_{12} \colon X_1 \vdash X_2, \Gamma_{23} \colon X_2 \vdash X_3, \Gamma_{34} \colon X_3 \vdash X_4$, we have
$$\Gamma_{34} \circ (\Gamma_{23} \circ \Gamma_{12}) = (\Gamma_{34} \circ \Gamma_{23}) \circ \Gamma_{12}. $$
\end{itemize}
\end{lemma}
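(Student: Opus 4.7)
The plan is to prove both axioms by reducing them, via the Flag fan description of Section \ref{section:flagfans}, to iterated applications of the projection formula (Theorem \ref{th:projformula}) and commutativity of Weil divisor extraction (\cite{ARau}, Proposition 3.7).

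For the unit axiom, label the factors of $B_X \times B_X \times B_Y$ by $1,2,3$ and let $p_{ij}$ denote the projection forgetting the complementary factor. Unwinding definitions, $\Gamma \circ \Delta_X = (p_{13})_*(p_{12}^*\Delta_X \cdot p_{23}^*\Gamma)$. Writing $\iota\colon B_X \hookrightarrow B_X \times B_X$ for the diagonal embedding, we have $\Delta_X = \iota_*[B_X]$, so pullback along the projection $p_{12}$ gives $p_{12}^*\Delta_X = (\iota \times \id_{B_Y})_*[B_X \times B_Y]$. Both compositions $p_{13} \circ (\iota \times \id_{B_Y})$ and $p_{23} \circ (\iota \times \id_{B_Y})$ equal the identity on $B_X \times B_Y$, so the projection formula applied to the embedding $\iota \times \id_{B_Y}$ yields
$$(\iota \times \id_{B_Y})_*[B_X \times B_Y] \cdot p_{23}^*\Gamma = (\iota \times \id_{B_Y})_*\Gamma,$$
and the $p_{13}$-pushforward of this recovers $\Gamma$. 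The identity $\Delta_X \circ \Gamma = \Gamma$ follows symmetrically.

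For associativity, I would work on the four-fold product $W = B_1 \times B_2 \times B_3 \times B_4$ with projections $q_{ij}\colon W \to B_i \times B_j$, and show that both sides equal the common expression
$$\Xi \defeq (q_{14})_*\bigl(q_{12}^*\Gamma_{12} \cdot q_{23}^*\Gamma_{23} \cdot q_{34}^*\Gamma_{34}\bigr).$$
Unwinding $(\Gamma_{34} \circ \Gamma_{23}) \circ \Gamma_{12}$: the inner composition is a pushforward from $B_2 \times B_3 \times B_4$ to $B_2 \times B_4$; composing further requires pulling that back to $B_1 \times B_2 \times B_4$, intersecting with the pullback of $\Gamma_{12}$, and pushing to $B_1 \times B_4$. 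The Cartesian square of projections $W \to B_1 \times B_2 \times B_4$ versus $B_2 \times B_3 \times B_4 \to B_2 \times B_4$ lets a pullback--pushforward pair be rewritten as a single pullback to $W$ followed by a single pushforward to $B_1 \times B_4$; commutativity of Weil divisor extraction then collects the three intersecting factors into $\Xi$. The analogous maneuver for $\Gamma_{34} \circ (\Gamma_{23} \circ \Gamma_{12})$ passes through the Cartesian square involving $B_1 \times B_3 \times B_4$ and again produces $\Xi$.

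The principal technical obstacle is that Theorem \ref{th:projformula} is stated only for rational functions, while both axioms require cycle-level versions of the projection formula and base change for projections. This gap is closed by Section \ref{section:flagfans}: every subcycle we encounter, including the diagonal $\Delta_X$ (cut by the $\phi_i$ of Definition \ref{diagconstr}) and the pullbacks $q_{ij}^*\Gamma_{ij}$, is a Flag fan cut out by an iterated sequence of cut functions. Substituting these Weil-divisor descriptions converts every cycle-level identity into repeated applications of Theorem \ref{th:projformula} together with the pointwise identity $(g \circ f)^*\phi = f^*g^*\phi$ for function pullbacks. With these reductions in hand, both axioms reduce to finite, if tedious, rearrangements of Weil divisors and pushforwards on the triple or quadruple product.
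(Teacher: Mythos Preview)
Your overall strategy matches the paper's: both reduce to Fulton's standard argument (Proposition 16.1.1 of \cite{Fulton}) via the four-fold product and the base-change identity $(\pi^{1234}_{123})_* \circ (\pi^{1234}_{124})^* = (\pi^{123}_{12})^* \circ (\pi^{124}_{12})_*$, which the paper simply declares ``obviously true in $\mathbf{SMatrCor}$''. Your derivation of the unit axiom via the diagonal embedding and your reduction of associativity to the common expression $\Xi$ are exactly the moves Fulton makes.

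There is, however, a genuine error in how you close the technical gap. You assert that Section~\ref{section:flagfans} saves the day because ``every subcycle we encounter, including \ldots\ the pullbacks $q_{ij}^*\Gamma_{ij}$, is a Flag fan cut out by an iterated sequence of cut functions''. This is false. Definition~\ref{def:tropcor} allows $\Gamma$ to be an \emph{arbitrary} tropical subcycle of $B_X \times B_Y$; there is no hypothesis that it is a Flag fan or is cut by rational functions. The Flag fan machinery applies to the graph correspondences $\Gamma_f$ of Section~\ref{sect:tropicalgraphs}, not to general morphisms in $\mathbf{SMatrCor}$. So your proposed reduction does not go through for the lemma as stated.

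The correct resolution is that the $\Gamma_{ij}$'s never need to be cut by functions. Pullback along a projection is simply cross product with the missing factor, $\pi_{XY}^*(\Gamma_{XY}) = \Gamma_{XY} \times B_Z$, and the intersection product of arbitrary subcycles in a Bergman fan (\cite{FRau}, Section~4) is \emph{defined} as pushforward of the diagonal times the cross product, where the diagonal is what is cut by the rational functions $\phi_i$ of Definition~\ref{diagconstr}. Hence in the expression $\pi_{XY}^*(\Gamma_{XY}) \cdot \pi_{YZ}^*(\Gamma_{YZ})$ the only Weil divisors being taken are those of the $\phi_i$, and Theorem~\ref{th:projformula} applied to these functions (not to the $\Gamma$'s) is what drives the Fulton argument. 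Once you rewrite the cycle-level projection formula and base change in terms of cross products and diagonal functions, the proof proceeds exactly as you outlined.
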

\begin{proof}
This is standard: see, for example, Proposition 16.1.1 of \cite{Fulton}. The only non-trivial ingredient needed is the projection formula --- Theorem \ref{th:projformula}. Note that Proposition 1.7 of \cite{Fulton}, that is used as a justification of one of the equalities in the proof, is actually applied to show that
$$(\pi^{1234}_{123})_* \circ (\pi^{1234}_{124})^* = (\pi^{123}_{12})^* \circ (\pi^{124}_{12})_*,$$
where $\pi^{i_1 \ldots i_k}_{j_1 \ldots j_l}$ denotes the projection $X_{i_1} \times \ldots \times X_{i_k} \to X_{j_1} \times \ldots \times X_{j_l}$. This particular case is obviously true in $\mathbf{SMatrCor}$.
\end{proof}
Category $\mathbf{SMatrCor}$ has too many morphisms to be useful for us here. In the category of correspondences between smooth projective varieties there is a subcategory of graphs of morphisms of varieties. We want to imitate this subcategory, but our starting point (see Example \ref{ex:simpletrcor} and discussion above it) is that actual graphs of tropical morphisms are not sufficient. Therefore, we establish a larger subcategory which we expect to behave like graphs.
\begin{definition} \label{def:graphcor}
Given matroids $M_1, M_2$ and a weak lattice map $f \colon \mathcal{F}_1 \to \mathcal{F}_2$, we define a \textit{graph rank function} $\gamma(f) \colon \mathcal{F}(M_1 \oplus M_2) \to \zz_{\leqslant 0}$ by the rule:
$$\gamma(f)((F_1,F_2)) = \rk_{M_2}(f(F_1) \cup F_2) - \rk_{M_1} F_1 - \rk_{M_2} F_2.$$
Similar to Lemma \ref{modcuts}, a graph rank function $\gamma(f)$ is decomposed as a sum of $\gamma_i(f) \colon \mathcal{F}(M_1 \oplus M_2) \to \{-1, 0\}, 1 \leqslant i \leqslant \rk{M_1}$ in such a way that value on each $F=(F_1,F_2)$ does not increase as $i$ grows. More precisely,
$$\gamma_i(f)((F_1,F_2)) = \begin{cases}
0, \mbox{ if $\rk_{M_2}(f(F_1) \cup F_2) + \rk{M_1}-i \geqslant \rk_{M_1} F_1 + \rk_{M_2} F_2$}  \\
-1, \mbox{ otherwise.}
\end{cases}$$
A \textit{graph correspondence} $\Gamma_f$ is given by
$$\prod_{i=1}^{\rk M_1} \gamma_i(f) \cdot (B(M_1) \times B(M_2)).$$
\end{definition}
\begin{example} \label{ex:grcors} Here are a few instances of Definition \ref{def:graphcor}:
\begin{enumerate}
\item For the identity lattice map $ \iota \colon \mathcal{F}(N) \to \mathcal{F}(N)$ we get $\gamma_i(\iota) = \phi_i$ from Definition \ref{diagconstr}, and thus $\Gamma_{\iota} = \Delta \subset B(N) \times B(N)$.
\item More generally, if $f \colon E_1 \to E_2$ is a strong map between groundsets of matroids $M_1$ and $M_2$, as in Definition \ref{def:strongweakmaps}, and $f^* \colon \rr^{E_2} \to \rr^{E_1}$ is the dual map, then the set-theoretical graph
$$\{(v, f^*(v)), v \in B(M_2)\} \subset B(M_2) \times B(M_1)$$
is a Bergman fan $B(M_f)$ of matroid $M_f$ on the groundset $E_1 \sqcup E_2$. Matroid $M_f$ is isomorphic to $M_2$, with $x \in E_1$ being parallel element to $f(x) \in E_2$. By Lemma \ref{modcuts}, since $|B(M_f)| \subset |B(M_1 \oplus M_2)|$, it is cut by the rank functions, and it is easy to see that they coincide with $\gamma_i(f_{\mathcal{F}})$, where $f_{\mathcal{F}} \colon \mathcal{F}(M_1) \to \mathcal{F}(M_2)$ is the lattice map induced by $f$. Thus, $\Gamma_{f_{\mathcal{F}}}$ equals set-theoretic graph of $f^*$. 
\item If $M$ and $M'$ are as in Example \ref{ex:simpletrcor}, and $f \colon \mathcal{F}(M) \to \mathcal{F}(M')$ is given by $f(F) = \cl_{M'}(F)$, then the Flag fan $\Gamma \subset B(M') \times B(M)$ that we constructed in the example equals $\Gamma_f$.
\end{enumerate}
\end{example}
We are going to see that Example \ref{ex:grcors} (3) is not a coincidence.
\begin{lemma} \label{lemma:covcorisff}
The graph correspondence $\Gamma_f$ of the covering lattice map $f$ is a Flag fan.
\end{lemma}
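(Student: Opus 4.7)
The plan is to realize $\Gamma_f$ as the result of successively cutting Weil divisors $\gamma_1(f), \ldots, \gamma_{\rk M_1}(f)$ on $B(M_1) \times B(M_2)$ and to apply Lemma \ref{lemma:ff} inductively, so the only non-trivial task is to verify that each $\gamma_i(f)$ defines a cut function on the intermediate Flag fan
$$X_{i-1} = \gamma_{i-1}(f) \cdots \gamma_1(f) \cdot (B(M_1) \times B(M_2)).$$
The base case $X_0 = B(M_1) \times B(M_2)$ is a Flag fan by Lemma \ref{lemma:ffproduct}, with underlying poset $\mathcal{F}(M_1) \times \mathcal{F}(M_2)$ and all edge weights equal to $1$.

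By construction each $\gamma_i(f)$ takes only the values $0$ and $-1$ on the elements of $\mathcal{F}(M_1) \times \mathcal{F}(M_2)$, so to qualify as a cut function it suffices to prove that
$$\mathcal{A}_i = \{(F_1,F_2) : \gamma_i(f)((F_1,F_2)) = -1\}$$
is up-closed. This is in turn equivalent to monotonicity of
$$d(F_1, F_2) := \rk_{M_1} F_1 + \rk_{M_2} F_2 - \rk_{M_2}(f(F_1) \cup F_2)$$
with respect to inclusion in $\mathcal{F}(M_1) \times \mathcal{F}(M_2)$. I verify monotonicity on single covers in two cases. For $(F_1, F_2) \lessdot (F_1, F_2')$, Lemma \ref{lemma:rankunion} bounds the rank increase of $f(F_1) \cup F_2$ by $1$, so $d$ grows by at least $0$. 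For $(F_1, F_2) \lessdot (F_1', F_2)$, I apply Lemma \ref{lemma:rankunion} with $A = f(F_1')$, $B = f(F_1)$, $C = F_2$, obtaining
$$\rk_{M_2}(f(F_1') \cup F_2) - \rk_{M_2}(f(F_1) \cup F_2) \leqslant \rk_{M_2} f(F_1') - \rk_{M_2} f(F_1),$$
and here the covering hypothesis on $f$ enters crucially: the right-hand side is $0$ or $1$ depending on whether $f(F_1') = f(F_1)$ or $f(F_1') \gtrdot f(F_1)$, so $d$ again grows by at least $0$.

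Once up-closedness of $\mathcal{A}_i$ in $\mathcal{F}(M_1) \times \mathcal{F}(M_2)$ is established, I upgrade it to the poset $\mathcal{G}_{i-1}$ underlying $X_{i-1}$: by Corollary \ref{cor:alg}, each cover $F \lessdot_{\mathcal{G}_{i-1}} F'$ either already appeared in $\mathcal{F}(M_1) \times \mathcal{F}(M_2)$ or is a ``jump'' cover witnessed by an honest chain $F \subsetneq \cdots \subsetneq F'$ in $\mathcal{F}(M_1) \times \mathcal{F}(M_2)$; in both cases monotonicity of $d$ in the original poset forces $F \in \mathcal{A}_i \Rightarrow F' \in \mathcal{A}_i$. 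Thus $\gamma_i(f)$ restricts to a cut function on $X_{i-1}$, so by Lemma \ref{lemma:ff} the next divisor $X_i$ is again a Flag fan, completing the induction.

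The main conceptual obstacle is pinpointing where the covering hypothesis is actually used: it shows up precisely as the bound $\rk_{M_2} f(F_1') - \rk_{M_2} f(F_1) \leqslant 1$ that fails in the central arrow of Example \ref{ex:three}, which would break monotonicity of $d$ and therefore the cut-function property. Beyond this, the argument is a routine inductive bookkeeping that the modifications of the underlying poset prescribed by Corollary \ref{cor:alg} respect up-closedness of the cut subsets $\mathcal{A}_i$.
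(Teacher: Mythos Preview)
Your proof is correct and follows essentially the same approach as the paper: both reduce the claim to showing that $\gamma(f)$ (your $-d$) is monotone along the two types of covers in $\mathcal{F}(M_1)\times\mathcal{F}(M_2)$, invoking Lemma \ref{lemma:rankunion} for each and using the covering hypothesis precisely to bound $\rk_{M_2} f(F_1') - \rk_{M_2} f(F_1) \leqslant 1$. Your treatment of the passage to the intermediate posets $\mathcal{G}_{i-1}$ via Corollary \ref{cor:alg} is a slightly more explicit version of the paper's parenthetical remark that these posets are subposets of the initial lattice.
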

\begin{proof}
We need to verify that each $\gamma_i(f)$ is monotonous on flats of $M_1 \oplus M_2$ and, therefore, on each of the Flag fans $\prod_{i=1}^{j} \gamma_i(f) \cdot (B(M_1) \times B(M_2))$ on the way (their posets are the subposets of the initial lattice of flats). Then, the claim follows from Lemma \ref{lemma:ff}.
\par Since $\gamma_i(f)$ are monotonously non-increasing as $i$ grows, and $\sum_i \gamma_i(f) = \gamma(f)$, we want to show that, if $F' \gtrdot F$, then $\gamma(f)(F') \leqslant \gamma(f)(F)$. The covering relations $F' \gtrdot_{M_1 \oplus M_2}  F$ can be of two kinds: $F' = (F'_1, F_2) \gtrdot (F_1, F_2) = F$, where $F'_1 \gtrdot_{M_1} F_1$, and $F' = (F_1, F'_2) \gtrdot  (F_1, F_2) = F$, where $F'_2 \gtrdot_{M_2} F_2$. As $\gamma(f)((F_1,F_2)) = \rk_{M_2}(f(F_1) \cup F_2) - \rk_{M_1} F_1 - \rk_{M_2} F_2$, and exactly one of the subtrahends increases by $1$ when replacing $F$ with $F'$, we want to show that
$$\rk_{M_2}(f(F'_1) \cup F_2) \leqslant \rk_{M_2}(f(F_1) \cup F_2)+1 \; \mbox{  and  } \; \rk_{M_2}(f(F_1) \cup F'_2) \leqslant \rk_{M_2}(f(F_1) \cup F_2)+1.$$
Since $f$ is a covering lattice map, $\rk_{M_2}{f(F'_1)} \leqslant \rk_{M_2}{f(F_1)}+1$, so both inequalities boil down to the fact that in any matroid $M$ with groundset $E$ and for any $A \subset E$
$$F' \gtrdot_M F \Rightarrow \rk_M(F' \cup A) - \rk_M(F \cup A) \leqslant 1,$$
which is simply Lemma \ref{lemma:rankunion}.
\end{proof}
\subsection{Graph functor} In this subsection we prove the second main result of the paper --- that graph correspondence from Definition \ref{def:graphcor} provides a functor from $\mathbf{SMatrWL_{\gtrdot}}$ to $\mathbf{SMatrCor}$ (Theorem \ref{th:functor}). In order to do that, we first describe the edge weights of graph correspondence $\Gamma_f$ combinatorially, in terms of $f$, in Theorem \ref{th:grcorstr}, using that $\Gamma_f$ is a Flag fan due to Lemma \ref{lemma:covcorisff}. It requires some preparation. Recall the following
\begin{definition}
A Möbius function $\mu$ on poset $\mathcal{G}$ is defined for pairs $A \leqslant_{\mathcal{G}} B$ recursively via
$$\mu(A,A)=1, \; \; \mu(A,B) = - \sum_{A \leqslant C < B} \mu(A,C).$$
One sees that function $\mu$ is designed in such a way that $\sum_{A \leqslant C \leqslant B} \mu(A,C) = \delta(A,B)$, where $\delta$ is a delta-function, equal to $1$ if $A=B$ and $0$ otherwise.
\end{definition}
We are going to use the following simple property of $\mu$, which can be found, for example, in \cite{Stanley} (Ch. 3, Exercise 88).
\begin{lemma} \label{lemma:muproperty}
For a poset $\mathcal{P}$ with the minimal element (denoted by $\varnothing$ for convenience)
$$\sum_{a,b \in \mathcal{P}}{\mu(a,b)}=1.$$
\end{lemma}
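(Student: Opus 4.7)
The plan is to reduce the double sum to column sums $\sum_{c \leq b} \mu(c,b)$ for each fixed $b$, which can be evaluated by a ``dual'' form of the Möbius identity stated just above the lemma.

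First, I would establish the dual identity
\[
\sum_{a \leq c \leq b} \mu(c, b) = \delta(a, b).
\]
The given identity $\sum_{a \leq c \leq b} \mu(a, c) = \delta(a, b)$ is precisely the equation $MZ = I$, where $M = (\mu(a,b))$ and $Z = ([a \leq b])$ are the Möbius and zeta matrices of $\mathcal{P}$ (supported on pairs $a \leq b$). In a finite poset (or more generally in any locally finite incidence algebra), one-sided inverses are automatically two-sided, so $ZM = I$ holds as well; writing this equation coordinatewise is exactly the dual identity above. If one prefers to avoid the incidence algebra, the same identity follows by a direct induction on the length of the interval $[a, b]$, mirroring the recursive definition of $\mu$.

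Next, applying the dual identity with $a = \varnothing$, the minimal element of $\mathcal{P}$, the condition $\varnothing \leq c$ becomes vacuous, so that for every $b \in \mathcal{P}$,
\[
\sum_{c \leq b} \mu(c, b) = \delta(\varnothing, b).
\]

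Finally, I would interchange the order of summation in the double sum and apply the previous line:
\[
\sum_{a,b \in \mathcal{P},\, a \leq b} \mu(a, b) \;=\; \sum_{b \in \mathcal{P}} \sum_{c \leq b} \mu(c, b) \;=\; \sum_{b \in \mathcal{P}} \delta(\varnothing, b) \;=\; 1,
\]
which is the desired identity. The only real obstacle is supplying the dual Möbius identity, since the paper records only one of its two standard forms; everything else is a one-line rearrangement that crucially uses the existence of the minimum $\varnothing$ to make the summation range $\{c \leq b\}$ match $\{\varnothing \leq c \leq b\}$ automatically.
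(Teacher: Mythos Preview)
Your proof is correct and follows essentially the same route as the paper: swap the order of summation, use the minimal element $\varnothing$ to rewrite the inner sum as $\sum_{\varnothing \leqslant a \leqslant b}\mu(a,b)$, apply the M\"obius identity to collapse it to $\delta(\varnothing,b)$, and sum over $b$. The only difference is cosmetic: you explicitly justify the ``dual'' identity $\sum_{a\leqslant c\leqslant b}\mu(c,b)=\delta(a,b)$ via the incidence-algebra argument, whereas the paper simply invokes it as if it were the same statement as the recorded identity $\sum_{A\leqslant C\leqslant B}\mu(A,C)=\delta(A,B)$; your extra care is warranted but not a different method.
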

\begin{proof}
We have
$$\sum_{a,b \in \mathcal{P}}{\mu(a,b)}= \sum_{b \in \mathcal{P}}{\sum_{a \leqslant b}{\mu(a,b)}}=\sum_{b \in \mathcal{P}}{\sum_{\varnothing \leqslant a \leqslant b}{\mu(a,b)}} = \mu(\varnothing,\varnothing) = 1,$$
where the penultimate equality follows from $\sum_{A \leqslant C \leqslant B} \mu(A,C) = \delta(A,B)$.
\end{proof}
\begin{remark} \label{remark:inclexcl}
We are going to use Lemma \ref{lemma:muproperty} several times, so, to avoid excessive combinatorial abstraction, we define $\eta(x) = \eta_{\mathcal{P}}(x) = \sum_{b \geqslant_{\mathcal{P}} x} \mu(x,b)$. Lemma \ref{lemma:muproperty} can be rewritten, then, as $\sum_{x \in \mathcal{P}}{\eta(x)}=1$ whenever $\mathcal{P}$ has the minimal element. Observe that, if $\mathcal{P}$ is a meet-semilattice, $\eta(x)$ is the coefficient of $x$ in the inclusion-exculsion formula written for the maximal elements $A_1, \ldots, A_n$ of the $\mathcal{P}$. Indeed,
$$\eta(x) = \sum_{b \geqslant x} \mu(x,b) = \sum_{1 \leqslant i_1 < \ldots < i_k \leqslant n}{(-1)^k \sum_{\bigcap_{j=1}^k A_{i_j} \geqslant b \geqslant x}{\mu(x,b)}} =  \sum_{1 \leqslant i_1 < \ldots < i_k \leqslant n}{(-1)^k \cdot \delta \left(\bigcap_{j=1}^k A_{i_j},x \right)}.$$
\end{remark}
Before formulating Theorem \ref{th:grcorstr} rigorously, let us give its informal explanation. Basically, it claims that the complete flags of the graph correspondence $\Gamma_f$ are of the form
$$\varnothing \lessdot (X_1,Y_1) \lessdot \ldots \lessdot (E(M_1), E(M_2)),$$
where $\varnothing \lessdot Y_1 \ldots \lessdot E(M_2)$ is a complete flag in $M_2$, images $f(X_i)$ lie under $Y_i$ in $\mathcal{F}(M_2)$ (but not necessarily equal to them), and the weights are balanced in such a way that the whole thing is a tropical fan and its push-forward onto $B(M_2)$ is $B(M_2)$. This way resembles inclusion-exclusion principle (see Remark \ref{remark:inclexcl}).
\begin{theorem} \label{th:grcorstr}
Let $\Gamma_f \colon B(M_1) \vdash B(M_2)$ be a graph correspondence of the covering lattice map $f \colon \mathcal{F}(M_1) \to \mathcal{F}(M_2)$. Let $(\mathcal{G},\omega_{\mathcal{G}})$ be the pair for which $\Gamma_f$ is a Flag fan. Then, all the edges of $\mathcal{G}$ are as follows:
$$(X_1,Y_1) \lessdot_{\mathcal{G}} (X_2,Y_2), \mbox{ where } \; Y_1 \lessdot_{M_2} Y_2, \; f(X_1) \leqslant_{M_2} Y_1 \mbox{ and } X_1 \leqslant_{M_1} X_2.$$
Denote by $\mathcal{F}(X_1,X_2,Y_1)$ the subposet of $\mathcal{F}(M_1)$ consisting of $F$ such that
$$X_1 \leqslant_{M_1} F \leqslant_{M_1} X_2 \; \mbox{ and } \; f(F) \leqslant_{M_2} Y_1.$$
Then,
$$\omega_{\mathcal{G}}((X_1,Y_1),(X_2,Y_2))=\eta_{\mathcal{F}(X_1,X_2,Y_1)}(X_1).$$
\end{theorem}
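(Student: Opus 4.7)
By Lemma \ref{lemma:covcorisff}, $\Gamma_f$ is already a Flag fan, so it remains to describe its poset $\mathcal{G}$ and the edge weights $\omega_{\mathcal{G}}$. My plan is to start from the Flag fan $B(M_1)\times B(M_2)=B(M_1\oplus M_2)$ (Lemma \ref{lemma:ffproduct}) and trace the combinatorial effect of the iterated cuts $\gamma_i(f)$ for $i=1,\ldots,\rk M_1$, via the algorithmic description in Corollary \ref{cor:alg} together with the local weight formula from the proof of Lemma \ref{lemma:ff}.

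First, iterating the rank identity $\rk_{\mathcal{G}_{\phi}}(F)=\rk_{\mathcal{G}}(F)+\phi(F)$ across all $\rk M_1$ cuts yields the final rank $\rk_{\mathcal{G}}((X,Y))=\rk_{M_1}X+\rk_{M_2}Y+\gamma(f)((X,Y))=\rk_{M_2}(f(X)\cup Y)$. A cover $(X_1,Y_1)\lessdot_{\mathcal{G}}(X_2,Y_2)$ must increase this rank by exactly one. Set-theoretic containment $X_1\sqcup Y_1\subsetneq X_2\sqcup Y_2$ gives $X_1\leqslant_{M_1} X_2$ and $Y_1\leqslant_{M_2} Y_2$, while for $(X_1,Y_1)$ to sit at rank $\rk_{M_2}Y_1$ one must have $f(X_1)\leqslant_{M_2} Y_1$. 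Lemma \ref{lemma:rankunion} combined with the covering property of $f$ (which keeps the rank of $f(X)\cup Y$ from jumping by more than one along a cover of $(X,Y)$) then forces $Y_1\lessdot_{M_2} Y_2$, recovering the edge characterization of the theorem.

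For the weight $\omega_{\mathcal{G}}((X_1,Y_1),(X_2,Y_2))$ I exploit the locality observation in the proof of Lemma \ref{lemma:ff}, iterated over all cuts: this weight depends only on the segment $[(X_1,Y_1),(X_2,Y_2)]$ of the initial poset $\mathcal{F}(M_1\oplus M_2)$ and on the restrictions of the $\gamma_i(f)$ to it. Since $Y_1\lessdot Y_2$, this segment equals $[X_1,X_2]\times\{Y_1,Y_2\}$ and corresponds to the $2$-dimensional Flag fan of $(M_1|_{X_2}/X_1)\oplus(M_2|_{Y_2}/Y_1)$. Tracking each cut via Corollary \ref{cor:alg}, the elements $(F,Y_1)$ that survive to the end are exactly those with $f(F)\leqslant Y_1$, i.e.\ the $F\in\mathcal{F}(X_1,X_2,Y_1)$; the elements $(F,Y_2)$ play a transient role, and their cumulative effect on the single edge between $(X_1,Y_1)$ and $(X_2,Y_2)$ organizes into a signed alternating sum indexed by chains in $[X_1,X_2]$.

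The main obstacle is identifying this signed sum with $\eta_{\mathcal{F}(X_1,X_2,Y_1)}(X_1)$. Via the inclusion-exclusion interpretation of $\eta$ from Remark \ref{remark:inclexcl}, the maximal elements of the meet-subsemilattice $\mathcal{F}(X_1,X_2,Y_1)$ play the role of the $A_i$'s, and the contributions $(-1)^k\delta(\bigcap_j A_{i_j},X_1)$ should arise precisely at the steps where a flat $F\in[X_1,X_2]$ crosses the threshold $f(F)\not\leqslant Y_1$. An alternative, potentially cleaner route is induction on $\rk(M_1|_{X_2}/X_1)$: the base case (interval of rank $1$) is immediate by direct calculation, and the inductive step peels off a single cover in $[X_1,X_2]$ using commutativity and associativity of Weil divisors to isolate one cut at a time, propagating the Möbius recursion $\mu(X_1,b)=-\sum_{X_1\leqslant c<b}\mu(X_1,c)$ that defines $\eta$.
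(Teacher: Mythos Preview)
Your proposal has the right architecture, but the argument for the vertex and edge characterization has a genuine gap. You correctly derive $\rk_{\mathcal{G}}(X,Y)=\rk_{M_2}(f(X)\cup Y)$ for surviving vertices, and then assert that ``for $(X_1,Y_1)$ to sit at rank $\rk_{M_2}Y_1$ one must have $f(X_1)\leqslant Y_1$.'' But nothing you have written forces a surviving vertex to sit at rank $\rk_{M_2}Y_1$: a vertex with $f(X)\not\leqslant Y$ could a priori survive with rank $\rk_{M_2}(f(X)\cup Y)>\rk_{M_2}Y$. Your ``rank jumps by at most one along a cover'' remark refers to covers in $\mathcal{F}(M_1\oplus M_2)$, whereas covers in $\mathcal{G}$ need not be covers there, so this does not yield $Y_1\lessdot_{M_2}Y_2$ either. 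The paper closes this gap with a separate argument: take a \emph{maximal} surviving vertex $(X,Y)$ with $f(X)\not\leqslant Y$, and show that at the critical step $i$ the localized function $\gamma_i(f)^{e_{(X,Y)}}$ on $\mathrm{Star}_{\Gamma_{i-1}}e_{(X,Y)}$ equals $-\chi_y$ for any $y\in(f(X)\vee Y)\setminus Y$, hence is linear, so by Lemma~\ref{lemma:stardivisor} the vertex dies. Only after this does rank comparison give $Y_1\lessdot_{M_2}Y_2$ on edges of $\mathcal{G}$.

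For the weights, your locality observation is correct, but calling the interval a ``$2$-dimensional Flag fan'' is off: its rank is $\rk_{M_1}X_2-\rk_{M_1}X_1+1$, which can be arbitrary. More to the point, neither of your two suggested routes is carried out. The paper's route is induction on the step $i$ at which the edge $((X_1,Y_1),(X_2,Y_2))$ first appears. In the base case $\mathcal{F}(X_1,X_2,Y_1)=\{X_1\}$, equation~\eqref{eq:graphweight} is evaluated directly: one checks that all intermediate $F'$ have $\gamma_i(f)(F')=0$, that the only $F'$ containing $Y_2$ is $(X_1,Y_2)$, and that the two edges through it have weight $1$, giving $c=1$. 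In the inductive step, the same equation together with the induction hypothesis for $X'>_{M_1}X_1$ in $\mathcal{F}(X_1,X_2,Y_1)$, plus the fact that edges with fixed $Y$-coordinate always carry weight $1$, reduces the claim to $\sum_{X'\in\mathcal{F}(X_1,X_2,Y_1)}\eta_{\mathcal{F}(X',X_2,Y_1)}(X')=1$, which is Lemma~\ref{lemma:muproperty}. Your proposed induction on $\rk(M_1|_{X_2}/X_1)$ would likely funnel into the same identity, but as written it is only a plan.
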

\begin{proof}
Denote the Flag fan $\gamma_{i}(f) \cdots \gamma_1(f) \cdot (B(M_1) \times B(M_2))$ by $\Gamma_{f,i} = \Gamma_{i}$, and the corresponding poset and weights pair by $(\mathcal{G}_{i},\omega_{\mathcal{G}_{i}})$. It turns out to be convenient to digress for a moment from considering edges and to focus on the vertices of $\mathcal{G}$. Notice that, if we show that only pairs $(X,Y) \in \mathcal{F}(M_1 \oplus M_2)$ such that $f(X) \leqslant_{M_2} Y$ can belong to $\mathcal{G}$, then the first claim follows. Indeed, the inequality $X_1 \leqslant_{M_1} X_2$ follows from the fact that $\mathcal{G}$ is the subposet of the initial lattice $\mathcal{F}(M_1 \oplus M_2)$, and $Y_1 \lessdot_{M_2} Y_2$ is obtained from the comparison of ranks in $\mathcal{G}$. More precisely, observe that $\gamma(f)((X,Y)) = -\rk_{M_1}{X}$ since $Y = Y \vee f(X)$, and their ranks in Definition \ref{def:graphcor} cancel each other out. Thus, 
$$\rk_{\mathcal{G}}{(X,Y)} = \rk_{M_1 \oplus M_2}{(X,Y)} + \gamma(f)((X,Y)) = \rk_{M_2}{Y},$$
which means that if $Y_2 >_{M_2} Y_1$ but $\rk_{M_2}{Y_2} - \rk_{M_2}{Y_1} > 1$, then there can be no edge in $\mathcal{G}$ between $(X_1,Y_1)$ and $(X_2,Y_2)$. But then there can be no edges between $(X_1,Y)$ and $(X_2,Y)$ in $\mathcal{G}$ as well, because any increasing chain must reach $(E(M_1),E(M_2))$ from $\varnothing$ in $\rk{\mathcal{G}}=\rk{M_2}$ steps, with the rank in $M_2$ not allowed to jump, so it must grow steadily by 1 on each edge of $\mathcal{G}$.
\par Assume the contrary: there exists $(X,Y) \in \mathcal{G}$ with $Y \neq f(X) \vee Y$. Take any maximal vertex with this property. Observe that, by definition of $\gamma(f)$, $\rk_{\mathcal{G}}{(X,Y)} = \rk_{\mathcal{G}}{(X,f(X) \vee Y)}$, meaning that the pair $(X,f(X) \vee Y)$ has its rank decreased by $1$ with each next $\mathcal{G}_i$ until minimal $i$ such that $\rk_{\mathcal{G}_i}{(X,Y)} = \rk_{\mathcal{G}_i}{(X,f(X) \vee Y)}$, and then they continue the descent together. Consider the last step before $(X,f(X) \vee Y)$ reaches $(X,Y)$, namely, maximal $i$ that $\gamma_i(f)((X,Y))=0$. We will prove that the localization $\gamma_i(f)^{e_{(X,Y)}}$ is linear on the star of $\mathrm{Star}_{\Gamma_{i-1}}e_{(X,Y)}$, so, by Lemma \ref{lemma:stardivisor}, $(X,Y)$ does not belong to the next poset $\mathcal{G}_i$ and, consequently, to $\mathcal{G}$.
\par We show that $\gamma_i(f)^{e_{(X,Y)}} = - \chi_y$, where $y \in (f(X) \vee Y) \setminus Y \subset E(M_2)$ is any element. Since the characteristic function $\chi_y$ is linear on the $\mathrm{Star}_{\Gamma_{i-1}}e_{(X,Y)}$, the claim follows. Note that we are not interested in the whole fan structure of the star, only that the rays are the elements $(X',Y')$ comparable with $(X,Y)$. Those of them which are below $(X,Y)$ have $\gamma_i(f)^{e_{(X,Y)}}$ equal to $0$. For $(X',Y') \geqslant_{\mathcal{G}_{i-1}}(X,Y)$ we need to verify the equivalence:
$$y \in Y' \Leftrightarrow \gamma_i(f)((X',Y')) = -1.$$
If $y \notin Y'$, then, in particular, $f(X') \vee Y' \supset f(X) \vee Y \ni y \notin Y'$. Since $\gamma_i(f)((X',Y'))$ is already $-1$, the next $\gamma_j(f)((X',Y'))$ are going to be $-1$ for all $j \geqslant i$, so the vertex $(X',Y')$ never disappears and thus belongs to $\mathcal{G}$, which contradicts $(X,Y)$ being a maximal element of $\mathcal{G}$ with $Y \neq f(X) \vee Y$. This proves $\Leftarrow$.
\par If $y \in Y'$, then 
\begin{equation} \label{eq:strictgamma}
\begin{split}
\gamma(f)((X',Y')) \leqslant \gamma(f)((X,Y')) = \rk_{M_2}(f(X) \cup Y') - \rk_{M_1}{X} - \rk_{M_2}{Y'} < \\ <
\rk_{M_2}(f(X) \cup Y) - \rk_{M_1}{X} - \rk_{M_2}{Y} = \gamma(f)((X,Y)),
\end{split}
\end{equation}
which implies $\gamma_i(f)((X',Y')) = -1$ since $\gamma_{i+1}(f)((X,Y)) = -1$. The strict inequality in equation \ref{eq:strictgamma} follows from Lemma \ref{lemma:rankunion} and the fact that $y \in f(X) \vee Y$. This proves $\Rightarrow$.
\par It remains to calculate $\omega_{\mathcal{G}}((X_1,Y_1),(X_2,Y_2))$ in the case where both vertices belong to $\mathcal{G}$. We will perform it by induction on the number of step $i$ when the edge $((X_1,Y_1),(X_2,Y_2))$ appears in $\mathcal{G}_i$ (step 2 of Corollary \ref{cor:alg}). This is made possible by the fact that, as we are going to see, all the edge weights on which $\omega_{\mathcal{G}}((X_1,Y_1),(X_2,Y_2))$ depends are determined for earlier $i$.
\par Poset $\mathcal{F}(X_1,X_2,Y_1)$ does not always have the top element (which is precisely the reason for the weights to get complicated), but it always has the bottom element $X_1$. Therefore, in the base of induction $\mathcal{F}(X_1,X_2,Y_1)$ consists of a single element $X_1$. Indeed, if there is another $X' \in \mathcal{F}(X_1,X_2,Y_1)$, the edge $((X',Y_1),(X_2,Y_2))$ would appear before, since
$$\gamma(f)((X',Y_1)) = -\rk_{M_1}{X'} < -\rk_{M_1}{X_1} = \gamma(f)((X_1,Y_1)).$$
We aim to show that $\omega_{\mathcal{G}}((X_1,Y_1),(X_2,Y_2))=1$.
\par The edge between vertices $(X_1,Y_1)$ and $(X_2,Y_2)$ appears when taking the Weil divisor of $\gamma_i(f)$ for such $i$ that $\gamma_i(f)(((X_1,Y_1))) = 0$, $\gamma_i(f)(((X_2,Y_2))) = -1$ and $\gamma_{i+1}(f)(((X_1,Y_1))) = -1$. As we have seen in Corollary \ref{cor:alg}, the weight on the newly-created edge is equal to
$$\deg{(\gamma_i(f)|_{(\Gamma_{i-1})_{(X_1,Y_1),(X_2,Y_2)}} \cdot (\Gamma_{i-1})_{(X_1,Y_1),(X_2,Y_2)})},$$
which, after expanding according to Definition \ref{def:Weil}, yields
\begin{equation} \label{eq:graphweight}
\begin{split}
-\gamma_i(f) \left(\sum_{(X_1,Y_1) \lessdot_{\mathcal{G}_{i-1}} F' \lessdot_{\mathcal{G}_{i-1}} (X_2,Y_2)} \omega_{\mathcal{G}_{i-1}}((X_1,Y_1),F')\omega_{\mathcal{G}_{i-1}}(F',(X_2,Y_2)) \cdot e_{F'} \right) + \\ +
\sum_{(X_1,Y_1) \lessdot_{\mathcal{G}_{i-1}} F' \lessdot_{\mathcal{G}_{i-1}} (X_2,Y_2)}
\omega_{\mathcal{G}_{i-1}}((X_1,Y_1),F')\omega_{\mathcal{G}_{i-1}}(F',(X_2,Y_2)) \cdot \gamma_i(f)(F').
\end{split}
\end{equation}
Due to the balancing condition written for the hyperface of $\Gamma_{i-1}$ generated by any chain containing $(X_1,Y_1)$ and $(X_2,Y_2)$ with missing $F'$, the sum in the large brackets is just some $c \cdot e_{(X_2,Y_2)}$. Then, since $\gamma_i(f)((X_2,Y_2)) = -1$, the first summand is just $c$. We want to show that $\gamma_i(f)(F')=0$ for all $F'$ in the sum, so that the second term is $0$, and that $c=1$.
\par Recall that $Y_2 \gtrdot_{M_2} Y_1$, thus every $F'$ has the form $(X',Y_1)$ or $(X',Y_2)$. In the former case, $X_2 \geqslant_{M_1} X' >_{M_1} X_1$, but $\mathcal{F}(X_1,X_2,Y_1)$ consists of $X_1$ only, therefore, $Y_1 \neq f(X') \vee Y_1$. Thus, if $\gamma_i(f)((X',Y_1))=-1$, we get a contradiction (this vertex should not exist already in $\mathcal{G}_{i-1}$). In the latter case, since $f(X') \leqslant_{M_2} f(X_2) \leqslant_{M_2} Y_2$, values of $\gamma(f)$ on both $(X',Y_2)$ and $(X_2,Y_2)$ are equal to the ranks $\rk_{M_1}{X'},\rk_{M_1}{X_2}$, respectively, and $\gamma_i(f)$ cannot coincide on them unless they are of the same rank in $\mathcal{G}_{i-1}$, in which case $(X_2,Y_2) \gtrdot_{\mathcal{G}_{i-1}} (X',Y_2)$ could not occur.
\par To see that $c=1$, focus on those $F'$ of the form $(X',Y_2)$ (in other words, count only the coordinate of any element $y \in Y_2 \setminus Y_1$ in the vector $c \cdot e_{(X_2,Y_2)}$). As we have just seen, if $X' > X_1$, then $(X_2,Y_2)$ cannot cover $(X',Y_2)$ in $\mathcal{G}_{i-1}$, so there is only one such $F' = (X_1,Y_2)$. Now, $\omega_{\mathcal{G}_{i-1}}((X_1,Y_1),(X_1,Y_2))=1$, because this is the original covering edge from $\mathcal{F}(M_1 \oplus M_2)$, where all the weights are $1$, so it remains to prove that $\omega_{\mathcal{G}_{i-1}}((X_1,Y_2),(X_2,Y_2))=1$ as well. This is easy, though: $\gamma(f)$ coincides with $-\rk_{M_1}$ on the whole segment $[(X_1,Y_2),(X_2,Y_2)]$ of $\mathcal{F}(M_1 \oplus M_2)$, and thus all newly-created edges always have weight $1$ there. Indeed, for each $i$ value of $\gamma_i(f)(X'',Y_2)$ equals $-1$ if $X''$ is above certain rank in $M_1$, so the value of each $\gamma_i(f)$ on the level of missing flats is $0$, thus they never impact the weight of the new edge. It follows that there is only one $F'$ containing $Y_2$, and its weight is 1, so $c=1$ as claimed. This completes the base of induction.
\par Let us now find $\omega_{\mathcal{G}}((X_1,Y_1),(X_2,Y_2))$ in the case of arbitrary $\mathcal{F}(X_1,X_2,Y_1)$. We begin as previously, fixing $i$ such that this edge appears when taking the Weil divisor of $\gamma_i(f)$, and writing equation \ref{eq:graphweight} for its weight. This time, though, we already know that
$$\omega_{\mathcal{G}_{i-1}}((X',Y_1),(X_2,Y_2))= \eta_{\mathcal{F}(X',X_2,Y_1)}(X')$$
for $X' >_{M_1} X_1$ that belong to $\mathcal{F}(X_1,X_2,Y_1)$ by the induction assumption. We have also seen that $\omega_{\mathcal{G}_{i-1}}((X_1,Y_1),(X',Y_1))=1$ (as in the previous paragraph, $\gamma(f)$ coincides with $-\rk_{M_1}$ on the whole segment $[(X_1,Y_1),(X',Y_1)]$). Thus, the second term of equation \ref{eq:graphweight} becomes
$$-\sum_{X \neq X' \in \mathcal{F}(X_1,X_2,Y_1)} \left(\eta_{\mathcal{F}(X',X_2,Y_1)}(X') \right).$$
The first term still equals $1$, since $c=1$ by exactly the same argument as before --- $(X_1,Y_2)$ is the only vertex between $(X_1,Y_1)$ and $(X_2,Y_2)$ containing elements of $Y_2 \setminus Y_1$, and both edges connecting it to $(X_1,Y_1)$ and $(X_2,Y_2)$ have weight $1$ in $\omega_{\mathcal{G}_{i-1}}$. Thus, we need
$$\eta_{\mathcal{F}(X_1,X_2,Y_1)}(X_1) = 1 -\sum_{X \neq X' \in \mathcal{F}(X_1,X_2,Y_1)} \left(
\eta_{\mathcal{F}(X',X_2,Y_1)}(X') \right),$$
which rearranges to
$$\sum_{X' \in \mathcal{F}(X_1,X_2,Y_1)} \eta_{\mathcal{F}(X',X_2,Y_1)}(X') = 1,$$
and this is simply Lemma \ref{lemma:muproperty}.
\end{proof}
We can now use Theorem \ref{th:grcorstr} to obtain
\begin{theorem} \label{th:functor}
In the notation of Definition \ref{def:graphcor}, a map between Hom-sets of $\mathbf{SMatrWL_{\gtrdot}}$ and $\mathbf{SMatrCor}$ given by $f \mapsto \Gamma_f$ is a functor: if $f \colon \mathcal{F}(M_1) \to \mathcal{F}(M_2)$ and $g \colon \mathcal{F}(M_2) \to \mathcal{F}(M_3)$ are covering lattice maps between simple matroids, then
$$\Gamma_{g \circ f} = \Gamma_g \circ \Gamma_f.$$
\end{theorem}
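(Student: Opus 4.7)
My plan is to realize both $\Gamma_{g\circ f}$ and $\Gamma_g\circ\Gamma_f$ as Flag fans in $B(M_1)\times B(M_3)$ with explicit Flag posets and edge weights, and then match them. For $\Gamma_{g\circ f}$, I first verify that $g\circ f$ is itself a covering lattice map (immediate from the definition), so Lemma \ref{lemma:covcorisff} and Theorem \ref{th:grcorstr} give its Flag fan structure: edges are $(X_1,Z_1)\lessdot(X_2,Z_2)$ with $Z_1\lessdot_{M_3}Z_2$, $g(f(X_1))\leqslant Z_1$, and $X_1\leqslant X_2$, carrying the weight $\eta_{\mathcal{F}_{gf}(X_1,X_2,Z_1)}(X_1)$, where $\mathcal{F}_{gf}(X_1,X_2,Z_1)=\{F:X_1\leqslant F\leqslant X_2,\,g(f(F))\leqslant Z_1\}$.

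For $\Gamma_g\circ\Gamma_f$ I would analyze the intermediate cycle $\mathcal{I}=\pi_{12}^*(\Gamma_f)\cdot\pi_{23}^*(\Gamma_g)$ in $B(M_1)\times B(M_2)\times B(M_3)$. As coordinate pullbacks, $\pi_{12}^*(\Gamma_f)=\Gamma_f\times B(M_3)$ and $\pi_{23}^*(\Gamma_g)=B(M_1)\times\Gamma_g$, both Flag fans by Lemma \ref{lemma:ffproduct}. Their intersection is obtained by iteratively cutting the ambient $B(M_1)\times B(M_2)\times B(M_3)$ by the pullbacks $\pi_{12}^*(\gamma_i(f))$ and $\pi_{23}^*(\gamma_j(g))$; each of these remains monotonically non-increasing on the product poset (being constant in the missing coordinate), hence is a cut function, so repeated application of Lemma \ref{lemma:ff} exhibits $\mathcal{I}$ as a Flag fan. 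Repeating the vertex argument from the proof of Theorem \ref{th:grcorstr} identifies its surviving vertices as triples $(X,Y,Z)$ with $f(X)\leqslant Y$ and $g(Y)\leqslant Z$, of rank $\rk_{M_3}(Z)$. Since the rank in $\mathcal{I}$ equals $\rk_{M_3}(Z)$, every maximal flag of $\mathcal{I}$ has strictly increasing $Z$-coordinates, so $\pi_{13}$ is injective on every maximal cone. By Definition \ref{def:pushforward} (all sublattice indices being $1$), the weight of the cone $\varnothing\lessdot(X_1,Z_1)\lessdot\ldots\lessdot(E_1,E_3)$ in $\Gamma_g\circ\Gamma_f$ is the sum, over all saturated chains $(X_i,Y_i,Z_i)$ in the $\mathcal{I}$-poset with the prescribed $X$- and $Z$-coordinates, of the product of the $\mathcal{I}$-edge weights.

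The main obstacle is to match this push-forward weight with $\prod_i\eta_{\mathcal{F}_{gf}(X_i,X_{i+1},Z_i)}(X_i)$, edge by edge. My plan is to compute each $\mathcal{I}$-edge weight by the same Möbius calculation as in the proof of Theorem \ref{th:grcorstr} — now applied jointly to $f$ and $g$ — producing a product of two $\eta$-values over $X$- and $Y$-intervals coupled by the constraints $f(X)\leqslant Y$ and $g(Y)\leqslant Z$; then sum over the intermediate $Y$-chain. The $Y$-sums should telescope via Lemma \ref{lemma:muproperty} in the inclusion-exclusion form of Remark \ref{remark:inclexcl}: for fixed $X_1,X_2,Z_1$, the admissible $Y$'s sweep out an interval of $\mathcal{F}(M_2)$ determined by $f$ and $g$, and their Möbius coefficients sum to $1$, leaving precisely the $X$-poset Möbius weight $\eta_{\mathcal{F}_{gf}(X_1,X_2,Z_1)}(X_1)$. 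The delicate bookkeeping — showing that the $f$- and $g$-constraints genuinely decouple across intermediate $Y$'s, so that the push-forward weight factors over the $\Gamma_{g\circ f}$-edges and the Möbius identity applies where it must — is where I expect the bulk of the work to concentrate.
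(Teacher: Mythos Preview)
Your plan is essentially the paper's own proof. The paper likewise writes the composition as $(\pi_{XZ})_*\bigl(\prod\pi_{XY}^*(\gamma_i(f))\cdot(B_X\times\Gamma_g)\bigr)$, proves a direct analogue of Theorem~\ref{th:grcorstr} (its Lemma~\ref{lemma:XYZ}) giving the $\mathcal{I}$-edge weight as $\omega_{\mathcal{G}_{YZ}}((Y_1,Z_1),(Y_2,Z_2))\cdot\eta_{\mathcal{F}(X_1,X_2,Y_1)}(X_1)$, and then compares the push-forward weight with $\prod_i\eta_{\mathcal{F}_{gf}(X_i,X_{i+1},Z_i)}(X_i)$ via Lemma~\ref{lemma:muproperty}.

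The one place where your sketch is vaguer than the paper is exactly the ``delicate bookkeeping'' you flag: the $Y$-sums do \emph{not} decouple edge-by-edge as you first suggest, because the constraint $Y_i\leqslant Y_{i+1}$ links them. The paper's resolution is to expand each $\eta$ as a sum of M\"obius values, introducing auxiliary flats $X'_i\in\mathcal{F}(X_i,X_{i+1},Y_i)$ and $Y'_i\in\mathcal{F}(Y_i,Y_{i+1},Z_i)$, and then reorder the summation: choose $X'_i$ first (their constraints $X_i\leqslant X'_i\leqslant X_{i+1}$, $g(f(X'_i))\leqslant Z_i$ are independent across $i$ and are precisely the $\mathcal{F}_{gf}$ conditions, so this outer sum factors into $\prod_i\eta_{\mathcal{F}_{gf}(X_i,X_{i+1},Z_i)}(X_i)$), and then sum over the chain $\ldots\leqslant Y_i\leqslant Y'_i\leqslant Y_{i+1}\leqslant\ldots$ sequentially from the top, each step collapsing to $1$ by Lemma~\ref{lemma:muproperty} since the relevant poset has minimal element $Y'_{j-1}\vee f(X'_j)$. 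That expansion-and-reorder is the actual mechanism behind your anticipated telescoping.
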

\begin{proof} Despite seemingly cumbersome construction of $\Gamma_g \circ \Gamma_f$ and the fact that in general we have no idea whether the push-forward of the Flag fan is again a Flag fan, or, even if it is, what are its $(\mathcal{G},\omega_{\mathcal{G}})$ --- despite all that, we already possess almost all the necessary tools to verify the claim.
\par For simplicity, denote $B(M_1)=B_X,B(M_2)=B_Y,B(M_3)=B_Z$. Thus,
$$\Gamma_f \subset B_X \times B_Y, \; \Gamma_g \subset B_Y \times B_Z, \; \Gamma_{g \circ f} \subset B_X \times B_Z,$$
and we need to show that
$$(\pi_{XZ})_* \left[(\Gamma_f \times B_Z)\cdot(B_X \times \Gamma_g)\right] = \Gamma_{g \circ f}.$$
Using the fact that, for the subcycles of Bergman fans which are cut by Weil divisors of rational functions $\phi_i$, the intersections via diagonal construction and via $\phi_i$'s coincide (\cite{FRau}, Theorem 4.5(6)), we will check the following:
$$(\pi_{XZ})_* \left( \prod \pi_{XY}^* (\gamma(f)) \cdot (B_X \times \Gamma_g) \right) = \prod \gamma(g \circ f) \cdot (B_X \times B_Z).$$
\par To control edge weights of $\prod \pi_{XY}^* (\gamma(f)) \cdot (B_X \times \Gamma_g)$, where $B_X \times \Gamma_g$ is not necessarily a Bergman fan anymore, we will need an almost verbatim generalization of Theorem \ref{th:grcorstr} which we formulate as a standalone lemma.
\begin{lemma} \label{lemma:XYZ}
In the notation of Theorem \ref{th:functor}, if $(\mathcal{H},\omega_{\mathcal{H}})$ is the pair corresponding to the Flag fan $\prod \pi_{XY}^* (\gamma_i(f)) \cdot (B_X \times \Gamma_g)$, then all the edges of $\mathcal{H}$ are as follows:
$$(X_1,Y_1,Z_1) \lessdot_{\mathcal{H}} (X_2,Y_2,Z_2), \mbox{ where } \; (Y_1,Z_1) \lessdot_{\mathcal{G}_{YZ}} (Y_2,Z_2), \; f(X_1) \leqslant_{M_2} Y_1 \mbox{ and } X_1 \leqslant_{M_1} X_2.$$
The weight of the edge is given by the formula
$$\omega_{\mathcal{H}}((X_1,Y_1,Z_1),(X_2,Y_2,Z_2)) = \omega_{\mathcal{G}_{YZ}}((Y_1,Z_1),(Y_2,Z_2)) \cdot \eta_{\mathcal{F}(X_1,X_2,Y_1)}(X_1).$$
\end{lemma}
\begin{proof}
By Lemma \ref{lemma:ffproduct} the edges of $B_X \times \Gamma_g$ before cutting Weil divisors are of two types. The first type is $((X_1,Y,Z),(X_2,Y,Z))$, where $X_1 \lessdot_{M_1} X_2$ and $(Y,Z) \in \mathcal{G}_{YZ}$. The weight of such an edge is $1$, as in $\mathcal{F}(M_1)$. The second type is $((X,Y_1,Z_1),(X,Y_2,Z_2))$, where $(Y_1,Z_1) \lessdot_{\mathcal{G}_{YZ}} (Y_2,Z_2)$. The weight of such an edge is $\eta_{\mathcal{F}(Y_1,Y_2,Z_1)}(Y_1)$.
\par The claim is then verified analogously to Theorem \ref{th:grcorstr}. First, we show that only vertices with $f(X_1) \leqslant_{M_2} Y_1$ survive in $\mathcal{H}$. It follows that for any edge $((X_1,Y_1,Z_1),(X_2,Y_2,Z_2))$ of $\mathcal{H}$ the pair $(Y_2,Z_2)$ must cover $(Y_1,Z_1)$ in $\mathcal{G}_{YZ}$. To show this, as before, assume the contrary, and take any maximal surviving vertex $(X_1,Y_1,Z_1) \in \mathcal{H}$ with $f(X_1) \vee Y_1 \neq Y_1$. Consider the step $i$ after which the ranks in $\mathcal{H}_i$ of all the vertices $(X_1, f(X_1) \vee Y_1, Z')$ become equal to the rank of $(X_1,Y_1,Z_1)$. See that at this step, localized function $\pi_{XY}^*(\gamma_i(f))^{e_{(X_1,Y_1,Z_1)}}$ on the star of $e_{(X_1,Y_1,Z_1)}$ is equal to minus the characteristic function of any element of $(f(X_1) \vee Y_1) \setminus Y_1$.
\par Then, we verify that the weights are as claimed using induction on applying $\pi_{XY}^* (\gamma_i(f))$. The necessary observations are that
\begin{itemize}
\item all the initial covering relations coming from $M_1$ have weights $1$;
\item all the newly-created edges of the type $((X_1,Y,Z),(X_2,Y,Z))$ between the vertices present in $\mathcal{H}$ also have weights $1$, because $\pi_{XY}^*(\gamma(f))$ coincides with $-rk_{M_1}$ on the segment;
\item all the initial covering relations coming from $\mathcal{G}_{YZ}$ inherit their weights from $\mathcal{G}_{YZ}$, thus, in induction handling equation \ref{eq:graphweight}, everything is multiplied by $\eta_{\mathcal{F}(Y_1,Y_2,Z_1)}(Y_1)$.
\end{itemize}
\end{proof}
According to Lemma \ref{lemma:XYZ} and Definition \ref{def:pushforward}, the weight of the maximal cone generated by the chain $\varnothing \lessdot (X_1,Z_1) \lessdot \ldots \lessdot (E(M_1), E(M_3))$ of length $r = \rk{M_3}$ in the push-forward is given by
\begin{equation} \label{eq:xzweight}
\sum_{Y_0 \leqslant \ldots \leqslant Y_r} \left( \prod_{i=0}^{r-1} \omega_{\mathcal{G}_{YZ}}((Y_i,Y_{i+1}),(Z_i,Z_{i+1})) \cdot
\prod_{i=0}^{r-1} \omega_{\mathcal{G}_{XY}}((X_i,X_{i+1}),(Y_i,Y_{i+1})) \right),
\end{equation}
where we assume summation over only suitable sequences $Y_i$ (mind that, unlike $Z_i$, they do not have to be strictly increasing in $\mathcal{F}(M_2)$). Next, we rearrange
\begin{equation*}
\begin{split}
\prod_{i=0}^{r-1} \omega_{\mathcal{G}_{YZ}}((Y_i,Y_{i+1}),(Z_i,Z_{i+1})) = \prod_{i=0}^{r-1}
\left( \sum_{Y'_i \in \mathcal{F}(Y_i,Y_{i+1},Z_i)} \mu(Y_i,Y'_i) \right) = \\ =
\sum_{Y'_0 \ldots Y'_r} \left( \prod_{i=0}^{r-1} \mu_{\mathcal{F}(Y_i,Y_{i+1},Z_i)}(Y_i,Y'_i) \right),
\end{split}
\end{equation*}
and, likewise,
$$\prod_{i=0}^{r-1} \omega_{\mathcal{G}_{XY}}((X_i,X_{i+1}),(Y_i,Y_{i+1})) =
\sum_{X'_0 \ldots X'_r} \left( \prod_{i=0}^{r-1} \mu_{\mathcal{F}(X_i,X_{i+1},Y_i)}(X_i,X'_i) \right).$$
Substituting into equation \ref{eq:xzweight} and rearranging, we get (dropping the subscripts of $\mu$'s)
\begin{equation} \label{eq:eq:xzweight1}
\sum_{Y_0 \ldots Y_r}\sum_{\substack{Y'_0 \ldots Y'_r \\ X'_0 \ldots X'_r}} \left( \prod_{i=0}^{r-1} \mu(Y_i,Y'_i)\mu(X_i,X'_i) \right).
\end{equation}
The system of sets $X'_i, Y_i, Y'_i$ satisfies the following conditions:
$$\begin{matrix}
X_i \leqslant X'_i \leqslant X_{i+1} & Y_i \leqslant Y_{i+1} & Y_i \leqslant Y'_i \leqslant Y_{i+1} \\
f(X'_i) \leqslant Y_i & g(Y_i) \leqslant Z_i & g(Y_i') \leqslant Z_i
\end{matrix}$$
In the initial sum \ref{eq:xzweight} we've chosen $Y_i$'s satisfying conditions from the middle column and then $X'_i$'s and $Y'_i$'s satisfying the rest of conditions. We obtain the same system of sets by first choosing $X'_i$ such that
\begin{equation} \label{xprimecond}
X_i \leqslant X'_i \leqslant X_{i+1} \; \mbox{ and } \; g(f(X'_i)) \leqslant Z_i,
\end{equation}
and then choosing
\begin{equation} \label{eq:yyprimecond}
\begin{matrix}
 & & f(X'_i) & & & & f(X'_{i+1}) & & & & \\
 & & \vgeq & & & & \vgeq & & & & \\
\ldots & \leqslant & Y_i & \leqslant & Y'_i & \leqslant & Y_{i+1} & \leqslant & Y'_{i+1} & \leqslant & \ldots \\
 & & & & \vmapsto & & & & \vmapsto & & \\
 & & & & g(Y'_i) & \leqslant & Z_i & & g(Y'_{i+1}) & \leqslant & Z'_{i+1}
\end{matrix}
\end{equation}
Thus, the summation in equation \ref{eq:eq:xzweight1} can be rewritten as
\begin{equation*}
\sum_{X'_0 \ldots X'_r} \left( \prod_{i=0}^{r-1} \mu(X_i,X'_i) \cdot \sum_{\substack{Y_0 \ldots Y_r \\ Y'_0 \ldots Y'_r}} \prod_{i=0}^{r-1} \mu(Y_i,Y'_i) \right),
\end{equation*}
while the desired coefficient of the same maximal cone generated by the chain $\varnothing \lessdot (X_1,Z_1) \lessdot \ldots \lessdot (E(M_1), E(M_3))$ in $\Gamma_{g \circ f}$ is just
\begin{equation*}
\prod_{i=0}^{r-1} \omega_{\mathcal{G}_{XZ}}((X_i,X_{i+1}),(Z_i,Z_{i+1})) = \sum_{X'_0 \ldots X'_r} \left( \prod_{i=0}^{r-1} \mu(X_i,X'_i) \right),
\end{equation*}
where the summation is taken over $X'_i$ with same conditions as in \ref{xprimecond}, so we need to show that
\begin{equation*}
\sum_{\substack{Y_0 \ldots Y_r \\ Y'_0 \ldots Y'_r}} \prod_{i=0}^{r-1} \mu(Y_i,Y'_i) = 1.
\end{equation*}
By Lemma \ref{lemma:muproperty}, for a poset $\mathcal{P}$ with bottom element, $\sum_{a \leqslant_{\mathcal{P}} b} \mu(a,b) = 1$. Making choices of pairs $(Y_i,Y'_i)$ one by one, we obtain
\begin{equation*}
\begin{split}
\sum_{\substack{Y_0 \ldots Y_r \\ Y'_0 \ldots Y'_r}} \prod_{i=0}^{r-1} \mu(Y_i,Y'_i) = \sum_{Y_0,Y'_0} \mu(Y_0,Y'_0) \sum_{\substack{Y_1 \ldots Y_r \\ Y'_1 \ldots Y'_r}} \prod_{i=1}^{r-1} \mu(Y_i,Y'_i) = \\ =
\sum_{Y_0,Y'_0} \mu(Y_0,Y'_0) \sum_{Y_1,Y'_1} \mu(Y_1,Y'_1) \sum_{\substack{Y_2 \ldots Y_r \\ Y'_2 \ldots Y'_r}} \prod_{i=2}^{r-1} \mu(Y_i,Y'_i) \ldots
\end{split}
\end{equation*}
where the summation $(Y_j,Y'_j)$ goes over pairs that satisfy all the conditions of \ref{eq:yyprimecond} concerning $X_i$ and $Z_i$, and also $Y_j \geqslant Y'_{j-1}$. Each of those modified subsets has the minimal element $Y'_{j-1} \vee f(X'_j)$, therefore, the whole sum turns into $1$ from the end to the beginning.
\end{proof}
\par
\begin{remark}
It is possible to define $\Gamma_f$ without $f$ itself, using the description from Theorem \ref{th:grcorstr}, thus avoiding its proof. There are several reasons, though, for which we find our approach preferable. Firstly, the fact that $\Gamma_f$ is a Flag tropical fan is now an instance of Lemma \ref{lemma:ff}, and we do not need to verify balancing conditions of $\Gamma_f$ combinatorially. Secondly, using $f$ to construct $\Gamma_f$ turns graph correspondence into a rigorous generalization of diagonal $\Delta \subset B(N) \times B(N)$ and Bergman subfan $B(N) \subset B(M)$ from \cite{FRau} (Example \ref{ex:grcors} (2)). Thirdly, $\Gamma_f$ is defined even if $f$ is not a covering lattice map, while it is not a Flag fan and does not admit this kind of description --- this may turn out to be valuable to us (see Question \ref{question} below). Finally, $f$ clearly stores information about $\Gamma_f$ in a more compact way. The next statement hints that, maybe, we can even work with $\gamma(f)$ instead of $f$ when trying to generalize claims like Theorem \ref{th:functor}.
\end{remark}
\begin{lemma}
If $f,g$ are covering lattice maps, then $\gamma(g \circ f)$ can be defined as
$$\gamma(g \circ f)(F_X,F_Z) = \min_{F_Y \in \mathcal{F}(M_2)}{(\gamma(g)(F_Y,F_Z)+\gamma(f)(F_X,F_Y)+\rk_{M_2}{F_Y})}.$$
\end{lemma}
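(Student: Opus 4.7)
My plan is to unfold the definition of $\gamma$ on both sides, observe that the candidate minimum is attained at $F_Y = f(F_X)$, and then prove the corresponding lower bound using Lemma \ref{lemma:rankunion} together with a simple Lipschitz property of covering lattice maps.

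First I would expand everything. Writing $B = f(F_X)$, the right-hand side expression becomes
\[
\rk_{M_3}(g(F_Y)\cup F_Z) + \rk_{M_2}(B \cup F_Y) - \rk_{M_2}(F_Y) - \rk_{M_1}(F_X) - \rk_{M_3}(F_Z),
\]
while $\gamma(g\circ f)(F_X,F_Z) = \rk_{M_3}(g(B)\cup F_Z) - \rk_{M_1}(F_X) - \rk_{M_3}(F_Z)$. Plugging in $F_Y = B$ gives $\rk_{M_2}(B\cup B) - \rk_{M_2}(B) = 0$, so the candidate equals $\gamma(g\circ f)(F_X, F_Z)$ on the nose. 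Thus the minimum is at most the claimed value, and it remains to verify the matching lower bound
\[
\rk_{M_3}(g(F_Y)\cup F_Z) + \rk_{M_2}(B\cup F_Y) - \rk_{M_2}(F_Y) \;\geqslant\; \rk_{M_3}(g(B)\cup F_Z)
\]
for every $F_Y \in \mathcal{F}(M_2)$.

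The key auxiliary fact I would establish is that a covering lattice map $g$ is Lipschitz with constant $1$: for any flats $F \leqslant F'$ of $M_2$,
\[
\rk_{M_3}(g(F')) - \rk_{M_3}(g(F)) \;\leqslant\; \rk_{M_2}(F') - \rk_{M_2}(F).
\]
This is an immediate telescoping over a saturated chain from $F$ to $F'$: each covering step contributes $0$ or $1$ to the rank change of the image by the definition of ``covering''. Applying this to $F = F_Y$ and $F' = F_Y \vee B$ gives
\[
\rk_{M_3}(g(F_Y\vee B)) - \rk_{M_3}(g(F_Y)) \leqslant \rk_{M_2}(F_Y\vee B) - \rk_{M_2}(F_Y) = \rk_{M_2}(B \cup F_Y) - \rk_{M_2}(F_Y).
\]

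Next I would combine this with Lemma \ref{lemma:rankunion}: since $g(F_Y\vee B) \supseteq g(F_Y)$, adding the set $F_Z$ cannot increase the gap, yielding
\[
\rk_{M_3}(g(F_Y\vee B)\cup F_Z) - \rk_{M_3}(g(F_Y)\cup F_Z) \leqslant \rk_{M_3}(g(F_Y\vee B)) - \rk_{M_3}(g(F_Y)).
\]
Finally, monotonicity of rank under inclusion and the containment $g(B) \subseteq g(B \vee F_Y)$ (because $g$ is a lattice morphism) give $\rk_{M_3}(g(B)\cup F_Z) \leqslant \rk_{M_3}(g(B\vee F_Y)\cup F_Z)$. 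Chaining these three inequalities produces exactly the desired lower bound, completing the proof.

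The main obstacle I anticipate is cleanly isolating the Lipschitz step, since it is the only place where the covering hypothesis on $g$ is used (the map $f$ only contributes through the identity $f(F_X) \in \mathcal{F}(M_2)$, so we do not actually need $f$ to be covering). Once that Lipschitz lemma is stated, the remaining manipulations are routine rank inequalities, and the identification of the minimizer $F_Y = f(F_X)$ is just substitution.
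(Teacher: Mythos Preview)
Your proof is correct and follows essentially the same route as the paper: substitute $F_Y=f(F_X)$ to attain the minimum, then bound the remaining difference by a Lipschitz estimate for the covering map $g$ followed by Lemma~\ref{lemma:rankunion} to absorb $F_Z$. Your formulation is slightly tidier in that you make the Lipschitz step explicit and work with the join $F_Y\vee B$ rather than the raw union, and your closing observation that only $g$ (not $f$) needs to be covering is accurate.
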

\begin{proof}
By Definition \ref{def:graphcor} we have
\begin{equation*}
\begin{split}
\gamma(f)((F_X,F_Y)) = \rk_{M_2}(f(F_X) \cup F_Y) - \rk_{M_1} F_X - \rk_{M_2} F_Y, \\
\gamma(g)((F_Y,F_Z)) = \rk_{M_3}(g(F_Y) \cup F_Z) - \rk_{M_2} F_Y - \rk_{M_3} F_Z.
\end{split}
\end{equation*}
Substituting $F_Y = f(F_X)$, we get
\begin{equation*}
\begin{split}
\gamma(g)(f(F_X),F_Z)+\gamma(f)(F_X,f(F_X)) + \rk_{M_2}{f(F_X)}= \\ =
\rk_{M_3}(g(f(F_X)) \cup F_Z) - \rk_{M_1} F_X - \rk_{M_3} F_Z = \gamma(g \circ f)(F_X,F_Z).
\end{split}
\end{equation*}
Thus, it remains to prove that for any other $F' \in \mathcal{F}(M_2)$ the sum is not smaller, which is equivalent to
$$\rk_{M_2}(f(F_X) \cup F') - \rk_{M_2}{F'} \geqslant \rk_{M_3}(g(f(F_X)) \cup F_Z) - \rk_{M_3}(g(F') \cup F_Z).$$
Since $g$ is a covering map, applying it does not increase the difference of ranks, therefore,
$$\rk_{M_2}(f(F_X) \cup F') - \rk_{M_2}{F'} \geqslant \rk_{M_3}(g(f(F_X) \cup F')) - \rk_{M_3}{g(F')} \geqslant \rk_{M_3}(g(f(F_X))) - \rk_{M_3}{g(F')},$$
and after joining both terms with $F_Z$ the difference of ranks cannot increase by Lemma \ref{lemma:rankunion}.
\end{proof}
\begin{remark} \label{remark:notpullback}
As we can see from Theorem \ref{th:functor} and Example \ref{ex:three}, a reasonable subcategory of $\mathbf{SMatrCor}$ will not have tropical fibre product as pullback. More precisely, if $M_1$ is a uniform matroid of rank $3$ on the groundset $\{1,2,3,4\}$, $M_2$ is a uniform matroid of rank $3$ on the groundset $\{1,2,3,5\}$ and $N$ is a uniform matroid of rank $3$ on the groundset $\{1,2,3\}$, then the tropical fibre product is $B(M)$, where $M$ is a uniform matroid of rank $3$ on the groundset $\{1,2,3,4,5\}$. Consider Bergman fan $B(M')$ of another amalgam $M'$ on the groundset $\{1,2,3,4,5\}$ with $4,5$ being parallel elements. Then, there are numerous correspondences $M \vdash M'$ making $4$ and $5$ coincide, exactly as in Example \ref{ex:three}. All of them commute with the projection graph correspondences $M_{1,2} \vdash M,M'$, though.
\end{remark}
\begin{remark}
Note that in the image of the $\mathbf{SMatrWL_{\gtrdot}}$ under the forgetful functor $\mathrm{Pt}$ the tropical fibre product does not satisfy the universal property $\ref{eq:univprop}$. We show this by constructing an example of the proper amalgam and another amalgam such that the unique weak map between their groundset can only come from the non-covering lattice map --- see Figure \ref{fig:notpullback}. One verifies that $M$ is the tropical fibre product, but the identity map of the groundsets $M \to M'$ cannot be obtained from covering lattice map. Indeed, the flat $\{7,8\}$ of $M$ cannot map to anything other than $\{7,8\}$ --- if it maps to, say, $\{1,7,8\}$, then $\{4,5,6,7,8\}$ covers $\{7,8\}$ in $M$, but the closure of $\{1,4,5,6,7,8\}$ in $M'$ is the whole groundset, a flat that does not cover $\{7,8\}$. But then $\{1,2,3,7,8\}$ covers $\{7,8\}$ in $M$ and not in $M'$.
\begin{figure}[h]
\includegraphics[width=\textwidth]{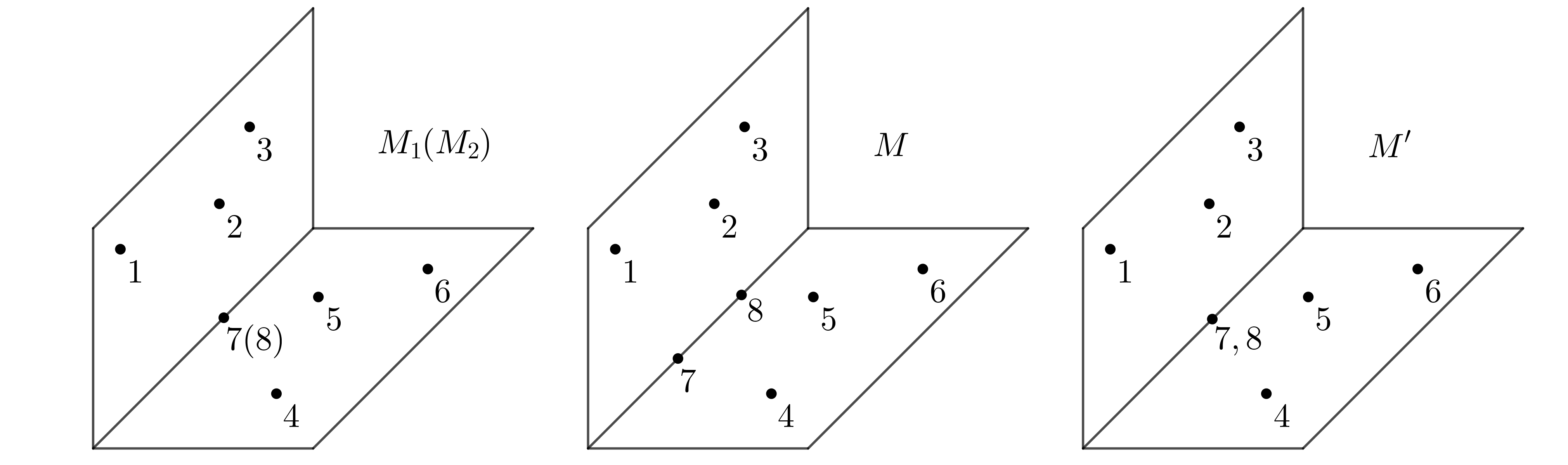}
\caption{$N$ is the restriction of $M_1$ or $M_2$ onto $\{1,2,3,4,5,6\}$.}
\label{fig:notpullback}
\end{figure}
\end{remark}
This insufficiency leads to the following question:
\begin{question} \label{question}
Can it be shown that $f \mapsto \Gamma_f$ is a functor from the whole $\mathbf{SMatrWL}$ to $\mathbf{SMatrCor}$?
\end{question}
\printbibliography
\end{document}